\newcommand{\cz}{Calder\'{o}n--Zygmund\ }
\newcommand{\ci}[1]{_{ {}_{\scriptstyle #1}}}
\newcommand{\ti}[1]{_{\scriptstyle \text{\rm #1}}}
\newcommand{\R}{\mathbb{R}}
\newcommand{\E}{\mathbb{E}}
\newcommand{\Z}{\mathbb{Z}}
\newcommand{\C}{\mathbb{C}}
\newcommand{\F}{\mathbb{F}}
\newcommand{\cL}{\mathcal{L}}
\newcommand{\bI}{\mathbf{I}}
\newcommand{\bW}{\mathbf W}
\newcommand{\bw}{\mathbf w}
\newcommand{\bV}{\mathbf V}
\newcommand{\bv}{\mathbf v}
\newcommand{\dd}{\mathrm{d}}
\newcommand{\cF}{\mathcal{F}}
\newcommand{\cX}{\mathcal{X}}
\newcommand{\cD}{\mathcal{D}}
\newcommand{\cR}{\mathcal{R}}
\newcommand{\1}{\mathbf{1}}
\newcommand{\fT}{\mathfrak{T}}
\newcommand{\fS}{\mathfrak{S}}
\newcommand{\fdot}{\,\cdot\,}
\newcommand{\wt}{\widetilde}
\newcommand{\La}{\langle }
\newcommand{\Ra}{\rangle }
\newcommand{\rk}{\operatorname{rk}}
\newcommand{\Ch}{\operatorname{Ch}}
\newcommand{\ran}{\operatorname{Ran}}
\newcommand{\tr}{\operatorname{tr}}
\newcommand{\be}{\begin{equation}}
\newcommand{\ee}{\end{equation}}
\newcommand{\I}{\mathbf{I}}
\renewcommand{\labelenumi}{(\roman{enumi})}
\newcounter{vremennyj}
\newcommand\cond[1]{\setcounter{vremennyj}{\theenumi}\setcounter{enumi}{#1}\labelenumi\setcounter{enumi}{\thevremennyj}}
\numberwithin{equation}{section}
\newtheorem{thm}{Theorem}[section]
\newtheorem{lm}[thm]{Lemma}
\newtheorem{cor}[thm]{Corollary}
\newtheorem*{prop*}{Proposition}
\theoremstyle{remark}
\newtheorem{rem}[thm]{Remark}
\newtheorem*{rem*}{Remark}
\newtheorem{defin}[thm]{Definition}
\newtheorem*{defin*}{Definition}
\newtheorem{exa}[thm]{Example}
\newtheorem*{exa*}{Example}
\begin{document}
\title[Two weight estimates with matrix measures]{Two weight estimates with matrix measures for well localized operators}

\author[K. Bickel]{Kelly Bickel$^{\dagger}$}
\address{Kelly Bickel, Department of Mathematics\\
Bucknell University\\
701 Moore Ave\
Lewisburg, PA 17837, USA}
\email{kelly.bickel@bucknell.edu}
\thanks{$\dagger$ Research supported in part by National Science Foundation 
DMS grant \#1448846.}

\author[A. Culiuc]{Amalia Culiuc}
\address{Amalia Culiuc, School of Mathematics\\ Georgia Institute of Technology\\ 686 Cherry Street\\ Atlanta, GA 30332-0160 USA }
\email{amalia@math.gatech.edu}

\author[S. Treil]{Sergei Treil$^{\star}$}
\thanks{$^{\star}$ Research supported  in part by National Science Foundation DMS grants \#1301579, 1600139 .}
\address{Sergei Treil, Department of Mathematics \\ Brown University \\ Providence, RI 02912 USA}
\email{treil@math.brown.edu}

\author[B. D. Wick]{Brett D. Wick$^\ddagger$}
\address{Brett D. Wick, Department of Mathematics\\
Washington University in St. Louis\\
One Brookings Drive\\
 St. Louis, MO 63130-4899, USA}
\email{wick@math.wustl.edu}
\thanks{$\ddagger$ Research supported in part by National Science Foundation
DMS grant \#1500509.}
\maketitle

\begin{abstract}
In this paper, we give necessary and sufficient conditions for weighted $L^2$ estimates with matrix-valued measures of well localized operators.   Namely, we seek estimates of the form:
\[
\| T(\mathbf{W} f)\|_{L^2(\mathbf{V})} \le C\|f\|_{L^2(\mathbf{W})} 
\]
where $T$ is formally an integral operator with additional structure, $\mathbf{W}, \mathbf{V}$ are matrix measures, and the underlying measure space possesses a filtration.  The characterization we obtain is of Sawyer-type; in particular we show that certain natural testing conditions obtained by studying the operator and its adjoint on indicator functions suffice to determine boundedness.  Working in both the matrix weighted setting and the setting of measure spaces with arbitrary filtrations requires novel modifications of a T1 proof strategy; a particular benefit of this level of generality is that we obtain polynomial estimates on the complexity of certain Haar shift operators.
\end{abstract}

\setcounter{tocdepth}{1}
\tableofcontents
\setcounter{tocdepth}{3}

\setcounter{section}{-1}
\section{Introduction}

In this paper, we give necessary and sufficient conditions for two weight $L^2$ estimates with matrix-valued measures of the so-called \emph{well localized operators}. We seek estimates of the form:
\[
\| T(\bW f)\|\ci{L^2(\bV)} \le C\|f\|\ci{L^2(\bW)}, 
\]
where $T$ is formally an integral operator with additional structure and $\bW, \bV$ are matrix measures. The main examples we have in mind are  \emph{Haar shifts} %with a finite number of terms 
and their different generalizations, considered in the matrix weighted spaces. For details concerning matrix measures, generalized Haar shifts, and well localized operators, see Sections 
\ref{Setup1}--\ref{Setup3}.

Our main results, Theorems \ref{well-loc-rel} and \ref{t:well-loc-est-02},  basically say that Sawyer-type testing conditions are necessary and sufficient for the boundedness of well localized operators. In other words, for the boundedness of such operators, it is sufficient to check the estimates of the operator and its adjoint on characteristic functions of cubes. 

One of the main motivations for the paper is the matrix $A_2$ conjecture.   This is an important open question in the matrix-weighted setting, which asks whether for a \cz operator $T$ 
\[  
\left \| T \right \|_{L^2(W) \rightarrow L^2(W)} \le C [W]_{A_2};
\]
here $C$ depends on $T$ but not $W$, and  
\[\big[W\big] _{A_2} := \sup_{I} \left \| \left( |I|^{-1} \int_I W(x) dx \right)^{1/2} \left( |I|^{-1} \int_I W(x)^{-1} dx \right)^{1/2} \right \|^2 < \infty\]
is the $A_2$ characteristic of $W$. For previous work on this problem, see \cite{bpw16, bw16, IKP, ps15}. Currently, the best known dependence of the norm of $T$ on the $A_2$  characteristic of $W$ in the matrix setting is $[W]_{A_2}^{3/2}$, which was established, first by the third author and collaborators, \cite{TPNV} and then by the second author and collaborators using some techniques from the preprint \cite{TPNV} coupled with a new method, \cite{FYC}.  In the scalar case, the first solution for particular cases (martingale multipliers, Hilbert Transform, Riesz Transforms, etc.) and the first solution for all \cz operators \cite{th12} used two weight techniques to get one weight estimates. In particular, in \cite{th12} a two weight estimate of Haar shifts, similar to a scalar version of our Theorem \ref{band-rel}, was one of the ingredients of the proof. 

The proof of the main results follows along the lines of \cite{NTV} and is outlined in Section \ref{mainproof}. The main part of the operator is estimated by a corresponding weighted paraproduct, see Section \ref{paraproduct}, and the estimate of the paraproduct is done using the Carleson Embedding Theorem. For the case of matrix-valued measures, one needs a matrix-weighted version of this theorem, with matrix-valued weights both in the domain and in the target space. Such a theorem appeared only recently in \cite{ct15}, making this paper possible.  

%This paper is also concerned with the matrix $A_2$ conjecture.   This is an important open question in the matrix-weighted setting, which asks: \[ ``\text{Is } \left \| T \right \|_{L^2(W) \rightarrow L^2(W)} \lesssim [W]_{A_2}?\text{''},\] where $T$ is an arbitrary Calder\'on-Zygmund operator, the ``$\lesssim$'' indicates a constant depending on $T$ but not $W$, and  \[\big[W\big] _{A_2} := \sup_{I} \left \| \left( |I|^{-1} \int_I W(x) dx \right)^{1/2} \left( |I|^{-1} \int_I W(x)^{-1} dx \right)^{1/2} \right \|^2 < \infty\] is the $A_2$ characteristic of $W$. For previous work on this problem, see \cite{bpw16, bw16, IKP, ps15}. Currently, the best known dependence of the norm of $T$ on the $A_2$  characteristic of $W$ in the matrix setting is $[W]_{A_2}^{3/2}$, which was established, independently by different methods, by the third author and collaborators and the second author and collaborators. 

In this paper, we adapt our main result Theorem \ref{t:well-loc-est-02} to Haar shifts with a finite number of terms, which allows us to simplify the testing requirements. Indeed, Theorem \ref{band-rel} reduces the matrix $A_2$ conjecture for this class of operators to establishing a single testing condition and its dual. Earlier, similar results were obtained (only in the scalar case) in \cite{lt16, ttv15} with significant extra work from the results of \cite{NTV} or by modifying the proofs from \cite{NTV}. 

This paper not only establishes two-weight theorems in the dyadic matrix weighted setting, but also considers the problem in a much more general situation and establishes better testing bounds. Indeed,  we treat the case of very general filtrations, not just the standard dyadic one and so, our cubes are allowed to have an arbitrary number of children. This requires us to slightly alter the definition of well localized and be more careful when estimating the ``easier'' part of our operator, see Section \ref{mainproof}.  As a benefit of working in this level of generality, we are able to get better estimates and stronger results than the ones in \cite{NTV} or \cite{bw15}, even in the case of scalar measures. Indeed, our arguments give polynomial dependence on the complexity of the Haar shift (or band of the well localized operator), while 
%earlier results of this form have 
the results from \cite{NTV},  \cite{bw15} only give
exponential dependence.  While the theorems we obtain also require one to test on a slightly more complicated class of functions, this additional condition can be removed if we  consider generalized Haar shifts and assume that measures satisfy a joint $A_2$ condition,  
 see Theorem \ref{band-rel}.  Thus, a dedicated reader will find the paper interesting even if they restrict themselves only to the scalar setting and to the setting when the underlying filtration is the standard dyadic lattice. Finally, we should mention that related matrix weighted results for different operators and with different estimates appear in \cite{bw15, k1, k2}. 

\section{Setup: matrix-valued measures and weighted estimates} 
\label{Setup1}
\subsection{Atomic filtered spaces}
Let $(\mathcal{X}, \cF, \sigma)$ be a sigma-finite measure space with a filtration $\{\cF_n\}$, that is, a sequence of increasing sigma-algebras $\cF_n\subset \cF$. Here, $\cF$ is the smallest sigma-algebra containing $\cup \cF_n.$ We make the assumption that $\cF_n$ is atomic, meaning that there exists a countable collection $\mathcal{D}_n$ of disjoint sets $Q$ of finite measure (which we call atoms or cubes) with the property that every set of $\cF_n$ is a union of atoms (cubes) $Q\in \mathcal{D}_n$. 

Denote by $\cD$ the collection of all atoms,  $\displaystyle\mathcal{D}=\cup_{n\in \mathbb{Z}}\mathcal{D}_n$. 
%We make the  further  assumption that every set in $\mathcal{F}$ is a countable union of $Q \in \mathcal{D}.$ 
%Since the dyadic grid in $\R^d$ is a typical exmple, we will often use word \emph{cube} instead of atom. 
A set $Q$ could belong to multiple generations $\cD_n$, so atoms $Q\in\cD_n$ should formally be represented as pairs $(Q,n)$. However, to simplify notation, we will suppress the dependence on $n$ and write $Q$ instead of $(Q,n)$; if the generation (or rank) $n$ is needed, it will be represented by $\rk Q$, i.e.~if $Q$ stands for the atom $(Q,n)$ then $n=\rk Q$. The inclusion $R\subset Q$ for atoms is understood as set inclusion together with the inequality $\rk R \ge \rk Q$. In particular, for any $r\in \Z$, $\Ch^r Q$ stands for the collection of atoms $R\subset Q$ with $\rk R=r+\rk Q$. For $r=1$, we write $\Ch Q$ and avoid the superscript. 

For a measurable set $E$, %and the underlying measure $\sigma$ 
we will often use the notation $|E|=\sigma(E)$.

\subsection{Examples}
\begin{exa} One motivating example is the standard dyadic filtration in $\R^d$ with Lebesgue measure. 
For $n\in \Z$, let 
\[ 
\cD_n := \{ 2^{-n}\left( (0, 1]^d+ k \right) : k\in \Z^d \} %\qquad n \in \mathbb{Z}.
\]
be the collection of dyadic cubes with side length $2^{-n}$. Then each $\cF_n$ is the $\sigma$-algebra generated by $\mathcal{D}_n$, and $\cF$ is the Borel $\sigma$-algebra. 
In this example, we do not have atoms of different ranks coinciding as sets. 
%Then  $\{\mathcal{F}_n\}$ is a sequence of increasing $\sigma$-algebras and $\mathcal{F} = \cup \mathcal{F}_n$ is the $\sigma$-algebra generated by the set of dyadic intervals. Notice that if $Q \in \mathcal{D}_n$, then $| Q | = 2^{-n}$. So, in this case, rank and atom size are inversely related.
\end{exa}

The standard dyadic filtration also leads to more interesting examples.

\begin{exa} Consider a measurable $\cX \subset \R^d$, again endowed with Lebesgue measure. For each $n \in \Z$, define the collection of atoms $\cD_n$  as the collection of all non-empty intersections $Q\cap\cX$, where $Q$ runs over all dyadic cubes of side length $2^{-n}$ from the previous example. 
%\[ \mathcal{D}_n = \{ (k, k+1], k \in \mathbb{Z} \}, \qquad n\in \mathbb{Z}.\]
%Then $\mathcal{F}_n$ is the $\sigma$-algebra generated by $\mathcal{D}_0$ and $\{\mathcal{F}_n\}$ is trivially a sequence of increasing $\sigma$-algebras with $\mathcal{F} = \mathcal{F}_0$. Here, atom size is independent of rank.

If, for example, $\cX=Q_0=(0, 1]^d$, then $Q_0\in \cD_n$ for all $n\le 0$, and we have cubes of different ranks coinciding as sets.  Taking more complicated $\cX,$ we can have more complicated structures of atoms and their ranks.  We can make this example even more complicated by letting the underlying measure $\sigma$ be an arbitrary Radon measure. 
\end{exa}

%However, the union (intersection) of atoms is just the union (intersection) of the corresponding sets and ``times'' $n$ are not taken into account. 

%Since the standard example of such a filtration is the dyadic lattice $\cD$ on $\mathbb{R}^N$, hence the choice of notation. Note though that in our setup $\cD$ will stand for a general collection of atoms and $Q\in \cD$ will simply be atoms, and not necessarily dyadic cubes.  

\subsection{Matrix-valued measures}
\label{s:mvm}
Let $\cF_0$ be the collection of sets  $E\cap F$ where  $E\in\cF$ and $F$ is a finite union of atoms.  
A $d\times d$ matrix-valued measure $\bW$ on $\cX$ is a countably additive function on $\cF_0$ with values in the set of  non-negative linear operators on $\F^d$, where $\F$ is either $\C$ or $\R$. Equivalently, $\bW=(\bw_{j,k})_{j,k=1}^d$ is a $d\times d$ matrix whose entries $\bw_{j,k}$ are (possibly signed or even complex-valued) measures that are finite on atoms, such that for any $E\in\cF_0,$ the matrix $(\bw_{j,k}(E))_{j,k=1}^d$ is positive semidefinite. Note that the measure $\bW$ is always finite on atoms. Given such a measure $\bW$ and measurable functions $f=(f_1, f_2, \ldots, f_d)^T$ and $g=(g_1, g_2, \ldots, g_d)^T$ on $\cX$ with values in $\F^d,$ we can define the integrals
\[
 \int_\cX \Bigl\La \dd \bW f, g \Bigr\Ra_{\F^d} :=\sum_{j,k=1}^d \int_\cX f_k \overline g_j \dd\bw_{j,k}, \qquad \int_\cX \dd \bW f,
\]
where the second integral is the vector whose $j$th coordinate is given by $\sum_{k=1}^d \int_\cX f_k \dd\bw_{j,k}$.

The weighted space $L^2(\bW)$ is the space of  measurable, $\F^d$-valued functions on $\cX$ satisfying
\[
\|f\|\ci{L^2(\bW)}^2 := \int_\cX \Bigl\La \dd \bW f, f\Bigr\Ra\ci{\F^d} <\infty. 
\]
%where $\F^d$ is either $\C^d$ or $\R^d$.
%%

Readers not comfortable with matrix-valued measures can always, without loss of generality, restrict themselves to working with absolutely continuous measures and matrix-valued functions. Namely, it is an easy corollary of the non-negativity of the matrix measure $\bW$ that all of the measures $\bw_{j,k}$ are absolutely continuous with respect to the trace measure $\bw:= \tr \bW := \sum_{k=1}^d \bw_{k,k}$. Therefore, we can write $\dd\bW = W \dd\bw$, where $W$ is a $\textbf{w}$-a.e.~positive semidefinite $d \times d$ matrix-valued function on $\cX$ and 
\[
\int_\cX \dd\bW f = \int_\cX Wf \dd \bw, \qquad \int_\cX \Bigl\La \dd \bW f, g \Bigr\Ra_{\F^d} = \int_\cX \Bigl\La  W f, g \Bigr\Ra_{\F^d} \dd \bw.
\]
The matrix-valued function $W$ is called the density of $\bW$ with respect to $\bw.$

\subsection{Weighted estimates with matrix weights}
This paper deals with two weight estimates of discrete ``integral'' operators $T$ that are represented (at least formally) as $Tf(x) = \int_{\cX} K(x,y) f(y) \dd\sigma(y)$, where the kernel $K(x,y)=k(x,y) \otimes \textbf{I}_d$, for a scalar-valued kernel $k(x,y)$. We are interested in estimates of the form 
\begin{align}
\label{est-01}
\| T(\bW f)\|\ci{L^2(\bV)} \le C\|f\|\ci{L^2(\bW)} 
\end{align}
with matrix-valued measures $\bV$ and $\bW$. Here $T(\bW f)$ is defined for the integral operator $T$ by 
\begin{align}
\label{T_W}
T(\bW f)(x) = \int_\cX \dd \bW (y) K(x,y) f(y) = \int_\cX K(x,y) W(y) f(y) \dd \bw(y),
\end{align}
where $W$ is the density of $\bW$ with respect to the scalar trace measure $\bw$. 
We will use the symbol $T\ci\bW$ for the operator $f\mapsto T(\bW f)$ and the symbol $T_\bw$ for the operator $f\mapsto T(f\bw)$, where
\begin{align}
\label{T_w}
T_\bw f(x) := \int_\cX K(x,y) f(y) \dd\bw(y). 
\end{align}
This operator $T_\bw$ is defined for both scalar-valued functions and functions with values in $\F^d$; we will use the same notation for both cases, although formally in the latter case, we should write $T_\bw\otimes \bI_d$. 
If $\dd\bW = W\dd\bw$ and $\dd \bV=V\dd \bv$ for any scalar measures $\bw, \bv$ defined on $\mathcal{F}$ and positive semidefinite functions $W, V$, we can rewrite estimate \eqref{est-01} as
\begin{align}
\label{est-02}
\|V^{1/2} T_\bw W^{1/2} f\|\ci{L^2(\bv)} \le C \|f\|\ci{L^2(\bw)}. 
\end{align}
A particularly interesting case is when the measures $\bV$ and $\bW$ are absolutely continuous with respect to the underlying measure $\sigma$. Then we can write $\dd\bW = W\dd \sigma$ and $\dd \bV=V\dd \sigma$ and in \eqref{est-02}, we can just take $\bv=\bw=\sigma$. 

% \begin{question} Brett and I do not see how to handle the case where $\bV$ and $\bW$ are \textbf{not} absolutely continuous with respect to the underlying measure $\sigma$. This appears to be the only case handled in \cite{NTV} as well (see pp.~$598$ where $dv$ is  written as $v(x) dm$ for $v(x)$ a function and $m$ Lebesgue measure.) Moving between $T$ and $T_{\bW}$ and specifically proving Lemma \ref{lem:well} appear to require absolute continuity. 

% So, for the rest of this document, we assume that $\bW$ and $\bV$ are absolutely continuous with respect to $\sigma.$  In particular, this should mean that
% \[ d\bW = W(x) d\sigma \ \text{ and } \ d\bV = V(x) d \sigma\]
% for positive semidefinite matrix valued functions $W(x)$ and $V(x)$. However, we should verify that these matrices must be positive semidefinite a.e.  \
% \end{question}

% Answering to the above question: the main result should be Theorem \ref{well-loc-rel}, dealing with well localized operators, and in this theorem we do not need absolute continuity of measures with respect to $\sigma$. As for Lemma \ref{lem:well}, the version for generalized band operators with finitely many terms does not require absolute continuity, measures $\bV$ and $\bW$ can be arbitrary. And in many intended applications it is sufficient to have only finitely many terms.  

\section{Setup: generalized band operators and Haar shifts} \label{Setup2}
\subsection{Expectations and martingale differences.}
Let us introduce some notation and terminology. We call a measurable function $f$ \emph{locally integrable} if it is integrable on every atom $Q\in\cD$. 
 For an atom $Q$ and a locally integrable function $f$, we denote by $\La f\Ra\ci Q$ its average (with respect to the underlying measure $\sigma$)
\[
\La f \Ra\ci Q := \sigma(Q)^{-1} \int_Q f d\sigma, 
\]
with the convention that $\La f \Ra\ci Q=0$ if $\sigma(Q)=0$.  Define the averaging operator (expectation) $\E\ci Q$ by
\begin{align}
\label{AVO}
\E\ci Q f := \La f\Ra\ci Q \1\ci Q  
\end{align}
and the martingale difference operator $\Delta\ci Q$ by
\begin{align}
\label{MDO}
 \Delta\ci Q := \sum_{R\in\Ch Q} \E\ci R  - \E\ci Q. 
\end{align}
Note that $\E\ci Q$ and $\Delta\ci Q$ are orthogonal projections in $L^2(\sigma)$ and that the subspaces generated by the $\Delta\ci Q$ are orthogonal to each other.   We think of $\E\ci Q$, $\Delta\ci Q$ as operators in Lebesgue spaces ($\E\ci Q f$, $\Delta\ci Q f$ are defined $\sigma$-a.e.), so if for atoms $Q_{1}\subset Q_2$ we have $\sigma(Q_2\setminus Q_1) =0$, then $\Delta\ci{Q_2}=0$.

\subsubsection{Generalized band operators}
To the collection $\cD$, associate a tree structure where each $Q$ is connected to the elements of the collection $\Ch Q$. Given this tree, let $d\ci{\text{tree}}(Q,R)$ denote the ``tree distance'' between atoms $Q$ and $R$, namely, the number of edges of the shortest path connecting $Q$ and $R$. If $Q$ and $R$ share no common ancestors, then $d\ci{\text{tree}}(Q,R) = \infty.$ 

The operators of interest possess a band structure related to this tree distance, as defined below. These operators are called \emph{generalized band operators} because they generalize the band operators studied in \cite{NTV,bw15}.
\begin{defin}
A bounded operator $T:L^2(\sigma) \rightarrow L^2(\sigma)$ is  a \textit{generalized band operator of radius $r$} if $T$ can be written as 
\begin{align}
\label{GenBand-01}
T=\sum\ci{j,k=1}^2\sum\ci{Q,R \in \cD}P\ci R^j T\ci{R,Q}^{j,k}P\ci Q^k \qquad T\ci{R,Q}^{j,k}: P\ci Q^k L^2(\sigma) \rightarrow P\ci R^j L^2(\sigma),
\end{align}
where each $T^{j,k}\ci{R,Q}$ is a bounded operator on  $L^2(\sigma)$ satisfying $T^{j,k}\ci{R,Q}=0$ if $d\ti{tree}(R,Q)>r$ and for any $Q \in \cD$, the projections are defined as  $P\ci Q^1:=\Delta\ci Q$ and $P\ci Q^2:=\E\ci Q$.

In general, convergence is in the weak operator topology with respect to some ordering of the pairs $Q,R$.  We typically assume that the sum in \eqref{GenBand-01} has only finitely many nonzero terms once we collect the blocks in groups as in \eqref{BandToHaar}.
\end{defin}

Then each block $\wt T^{j,k}\ci{R,Q} := P\ci R^j T\ci{R,Q}^{j,k}P\ci Q^k$ is a bounded operator in $L^2(\sigma)$ and can be  represented as an integral operator with kernel $K\ci{R,Q}^{j,k}$. The  kernel $K\ci{R,Q}^{j,k}$ can be computed as follows:~for $y\in Q$ let $Q_y\in\Ch Q$ be the unique child of $Q$ containing $y$. Defining
\begin{align}
\label{CanKern}
K\ci{R, Q}^{j, k} (x,y) := 
\begin{cases} |Q_y|^{-1} \left(\wt T\ci{R,Q}^{j,k} \1\ci{Q_y} \right)(x), & y\in Q \\
0 & y\notin Q, \end{cases}
\end{align}
one can easily show that 
\begin{align*}
\left(\wt T^{j,k}\ci{R,Q} f\right)(x) = \int_\cX K\ci{R, Q}^{j, k} (x,y)  f(y) \dd\sigma(y)
\end{align*}
for all functions $f\in L^2(\sigma)$  such that $\1\ci Q f$ is supported on a finite union of cubes $S\in\Ch Q$.  One  can similarly show that the formula holds for functions of the form $a\textbf{1}_Q$, $a \in \mathbb{F}.$ Then since $\wt T^{j,k}\ci{R,Q} f =\wt T^{j,k}\ci{R,Q} ( f \1 \ci Q)$ for all $f \in L^2(\sigma),$ Lemma $\ref{dense}$  implies that the above integral formula for $\wt T^{j,k}_{R,Q}$ holds on a dense set of $ L^2(\sigma)$ and therefore, defines $\wt T^{j,k}_{R,Q}$.

Note also that the kernel $K\ci{R, Q}^{j, k}$ is supported on $R\times Q$ and is constant on sets $R'\times Q'$, for $R'\in\Ch R$, $Q'\in\Ch Q$. 

Because the operators $P^k_Q$  are orthogonal projections in $L^2(\sigma)$ and hence are self-adjoint, $T$ being a generalized band operator of radius $r$ implies that its adjoint $T^*$ is also a generalized band operator of radius $r$. Here are several examples of generalized band operators:

\begin{exa}
For a numerical sequence $a=\{a\ci Q\}\ci{Q\in\cD}$ define the ``dyadic'' operator 
$T_a$ on $L^2(\sigma)$  by
\[
T_af=\sum_{Q \in \cD} a\ci Q\E\ci Qf . 
\]
Trivially, $T_a$ is a generalized band operator of radius $r=0$ as long as $T_a$ is bounded on $L^2(\sigma).$
%Then, by setting $j=k=2$, and $T\ci{R,Q} = 0$ for each $R \ne Q$, and $T_{QQ}=a_Q$ for each $Q$, one can see that $T$ is a generalized band operator with radius $r=0.$ 
%WHAT CONDITIONS DO WE NEED ON $\{a_Q\}$ for $T$ TO BE BOUNDED?
\end{exa} 

\begin{rem*}
For a sequence $|a|:=\{|a\ci Q|\}\ci{Q\in\cD}$ one can easily see that the pointwise estimate
\begin{align*}
\left | \left(T_a f \right)  \right |(x) \le \left(T_{|a|} |f| \right)(x)  \qquad \forall x\in\cX
\end{align*}
holds.  So, in the scalar case, the two weight estimates for $T_a$ follow from the two weight estimates for $T_{|a|}$. Thus, in the scalar case, the operators with all $a\ci Q\ge 0$ (the so-called positive dyadic operators) play a special role in weighted estimates. 

In the case of  matrix-valued measures, it is not clear that the  weighted estimates of $T_{|a|}$ imply the corresponding estimates for $T_a$ (we suspect that this is not true),  so we do not reserve any special place for the positive dyadic operators. 
\end{rem*}

\begin{exa} For $r\in\Z_+$ and $b$ a locally integrable function, define the paraproduct $\Pi=\Pi_b^r$ of order $r$ on $L^2(\sigma)$ by 
\begin{align}
\label{Para-ex}
\Pi f=\sum_{Q \in \cD} \E\ci Q f \sum_{R\in\Ch^r Q}  \Delta\ci R b = \sum_{Q \in \cD}  \sum_{R\in\Ch^r Q}   \left( \Delta\ci R M_b \ \E\ci Q \right) f, 
\end{align}
where $M_b$ is multiplication by $b$. Clearly, as long as $\Pi$ is bounded on $L^2(\sigma)$, it is a generalized band operator of radius $r$.

\end{exa}
\begin{rem*}
Since $\Pi f$ is defined by the sum of an orthogonal series, the  convergence of the sum defining $\Pi$ in the weak operator topology implies its unconditional convergence in the strong operator topology. 
\end{rem*}

\begin{exa}
\label{ex:HaarShift-01}
A Haar shift of complexity $(m,n)$ is an operator $T:L^2(\sigma)\to L^2(\sigma)$ defined by
\begin{align}
\label{HaarShift-01}
T = \sum_{Q\in \cD} \  \sum_{\substack{R\in\Ch^n(Q),\\ S\in\Ch^m(Q)}} \Delta\ci R T\ci{R,S}\Delta\ci S, \qquad 
T\ci{R,S} : \Delta\ci S L^2(\sigma) \to \Delta\ci R L^2(\sigma),
\end{align}
where  for each bounded operator $\wt T\ci{R,S} := \Delta\ci R T\ci{R,S}\Delta\ci S,$ its  canonical kernel $K\ci{R,S}$  (defined by \eqref{CanKern} and supported on $R\times S$) satisfies the estimate 
\begin{align}
\label{HaarShiftNormalization-01}
\|K\ci{R,S}\|_\infty \le |Q|^{-1},
\end{align}
which in this paper, means $| K\ci{R,S} (x,y)| \le  |Q|^{-1}$ for all $x,y \in \cX.$
If $T$ is a Haar shift of complexity $(m,n)$, then trivially its adjoint $T^*$ is also a Haar shift of complexity $(n, m)$.  Any Haar shift of complexity $(m,n)$ (in fact, any bounded operator given by \eqref{HaarShift-01}) is a generalized band operator of radius $r=m+n$. \end{exa}

\begin{rem*}
An  operator defined by \eqref{HaarShift-01} is  bounded if and only if all blocks $T\ci Q$ 
\begin{align*}
T\ci Q:= \sum_{\substack{R\in\Ch^n(Q),\\ S\in\Ch^m(Q)}} \Delta\ci R T\ci{R,S}\Delta\ci S
\end{align*}
are uniformly bounded: in this case the series in \eqref{HaarShift-01} converges unconditionally (independently of the ordering) in the strong operator topology.

Note, that the normalization condition \eqref{HaarShiftNormalization-01} implies that $\|T\ci Q\| \le 1$. Indeed, \eqref{HaarShiftNormalization-01} implies that the block $T\ci Q$ can be represented as an integral operator with kernel $K\ci Q$ (supported on $Q\times Q$) satisfying $\|K\ci Q\|_\infty \le |Q|^{-1}$, so $\|K\ci Q\|\ci{L^2(Q\times Q)} \le 1$.  

\end{rem*}

The concept of Haar shifts can be generalized. 

\begin{defin}
\label{df:GenHaarShift}
A generalized Haar shift of complexity $(m,n)$ is an operator $T:L^2(\sigma)\to L^2(\sigma)$ defined by
\begin{align}
\label{GenHaarShift-01}
T = \sum_{j,k=1}^2\sum_{Q\in \cD} \  \sum_{\substack{R\in\Ch^n(Q),\\ S\in\Ch^m(Q)}} P^j\ci R T\ci{R,S}^{j,k}P^k\ci S, \qquad 
T\ci{R,S}^{j,k} : P^k\ci S L^2(\sigma) \to P^j\ci R L^2(\sigma),
\end{align}
where each $T^{j,k}\ci{R,S}$ is bounded, the projections are $P\ci Q^1:=\Delta\ci Q$ and $P\ci Q^2:=\E\ci Q$, and the kernel $K\ci{R,S}^{j,k}$ of  $\wt T^{j,k}\ci{R,S} := P\ci R^j T\ci{R,S}^{j,k} P^k\ci S$ satisfies
\begin{align}
\label{HaarShiftNormalization-02}
\| K^{j,k}\ci{R,S} \|_\infty \le |Q|^{-1}.
\end{align}
We typically assume that the sum in \eqref{GenHaarShift-01} has only finitely many nonzero $Q$ terms. In general, convergence is in the weak operator topology with respect to some ordering of the pairs $R,S$.  
\end{defin}
It is convenient to present an alternate representation of a (generalized) Haar shift by  grouping the terms $\wt T\ci{R,Q}^{j,k}$. Namely, denoting 
%$r=\max\{m,n\}$
%%
\begin{align*}
T\ci{Q} = \sum_{j,k=1}^2  \sum_{\substack{R\in\Ch^n(Q),\\ S\in\Ch^m(Q)}} P^j\ci R T\ci{R,S}^{j,k}P^k\ci S
\end{align*}
(or taking the inner sum in \eqref{HaarShift-01} for regular Haar shifts),  we can represent a generalized Haar shift as $\sum_{Q\in\cD} T\ci Q$. Note that the  kernel $K\ci Q$ of the integral operator $T\ci Q$ is supported on $Q\times Q$, constant on $R\times S$, $R, S\in\Ch^{r+1} Q$, $r=\max\{m,n\}$. Since the sets $R\times S$, $R\in\Ch^n Q$, $S\in\Ch^m Q$ are disjoint, the kernel $K_{Q}$ 
satisfies the estimate
\begin{align}
\label{HaarShiftNormalization-03}
\| K\ci Q\|_\infty \le |Q|^{-1}
\end{align}
for Haar shifts, and the estimate  $\| K\ci Q\|_\infty \le 4 |Q|^{-1}$ for generalized ones. We need the constant ``$4$'' here because for each pair $R\in \Ch^n Q$, $ S\in\Ch^m Q,$ there are four operators $T\ci{R, S}^{j,k}$. This discussion motivates the following general object of study:

\begin{defin}
\label{d:BigHaarShift}
A \emph{generalized big Haar shift} of complexity $r$ is a bounded operator $T: L^2(\sigma) \rightarrow L^2(\sigma)$ defined by
\begin{align}
\label{BigHaarShift}
T=\sum_{Q\in\cD} T\ci Q, 
\end{align}
where each block $T\ci Q$ is an integral operator with kernel $K\ci Q$, where $K\ci Q$ is supported on $Q\times Q $, constant on $R\times S$ with $R, S\in\Ch^{r+1}Q,$ and  
satisfies the estimate \eqref{HaarShiftNormalization-03}.  If in addition, each block $T\ci Q$ and its adjoint $T^*\ci Q$ annihilate constants $\1\ci Q$, we will call the operator simply a \emph{big Haar shift} of complexity $r$, without  the word \emph{generalized}. 
Finally, if an operator $T$ admits the above representation but without the estimate \eqref{HaarShiftNormalization-03}, we will say that the operator $T$ \emph{has the structure of a (generalized) big Haar shift} of complexity $r$. 
\end{defin}

\begin{rem}
\label{r:band-Haar}
It is easy to see that a (generalized) band operator of radius $r$ has the structure of a (generalized) big Haar shift of complexity $r$. To see that, we can just define 
\begin{align}
\label{BandToHaar}
T\ci Q := \sum_{R\in\Ch^r Q} \sum_{\substack{S\in\cD(Q)\\ \rk S \le \rk R}} \sum_{j,k=1}^2 \wt T\ci{R,S}^{j,k}+  \sum_{S\in\Ch^r Q} \sum_{\substack{R\in\cD(Q)\\ \rk R < \rk S}} \sum_{j,k=1}^2 \wt T\ci{R,S}^{j,k}.
\end{align}
Moreover, if the kernels $K\ci{R,S}^{j,k}$ of the blocks $\wt T\ci{R,S}^{j,k}$ admit the estimate $\| K\ci{R,S}^{j,k}\|_\infty\le 1/4 |Q|^{-1}$ (or the estimate $\| K\ci{R,S}\|_\infty\le |Q|^{-1}$ for kernels of the blocks $\wt T\ci{R,S}$ for the case of a band operator), then the operator $T$ is a (generalized) big Haar shift. 
\end{rem}

\section{Setup: weighted martingale differences and well localized operators} \label{Setup3}
\subsection{Weighted martingale differences}
For the matrix measure $\bW$ (or $\bV$) discussed above, one can define the $\bW$-weighted expectation $\E\ci Q^{\bW}$ and the martingale difference $\Delta\ci Q^{\bW}$ by
\begin{align}
\label{E^W-01}
\E\ci Q^{\bW} f  & = \La f \Ra\ci{Q}^{\bW} \1\ci Q , \qquad \La f \Ra\ci{Q}^{\bW} :=\bW(Q)^{-1}\left(\int_Q d\bW f \right)
\intertext{and} 
\label{Delta^W}
 \Delta\ci Q^{\bW}&=\displaystyle\sum\ci{R\in \Ch Q}\E\ci R^{\bW}-\E\ci Q^{\bW} 
\end{align}
respectively, for all atoms $Q \in \mathcal{D}.$  

Initially, this definition only makes sense if $\bW(Q)$ is invertible. However if $\bW(Q)$ is not invertible, we can interpret $\bW(Q)^{-1}$  as the Moore--Penrose pseudoinverse of $\bW(Q).$ Here, the Moore--Penrose pseudoinverse of a matrix $A$ is the unique matrix $A^{+}$ defined as follows: on $\ker A$, it is the zero operator and on $(\ker A)^{\perp} = \text{Ran } A$, it is the inverse of $A|_{\text{Ran }A}$. For example, if $\bW(Q)=0$, then $\bW(Q)^{-1}=0.$ 

The standard computations for $\E\ci Q^{\bW} $ and $ \Delta\ci Q^{\bW}$ still work in this general case because $\int_Q \dd\bW f \in \ran \bW(Q)$. To see this, write $\bW=W(x) d\textbf{w},$ where $\textbf{w}$ is the trace measure of $\textbf{W}$, and observe that a vector $e \in \text{Ker} \bW(Q)$ if and only if $\textbf{1}_Q e$ equals the zero function in $L^2(\bW).$ One can use this to show $\int_Q \dd\bW f  \perp \text{Ker} \bW(Q)$ and using $\bW(Q)$ self-adjoint, conclude 
$\int_Q \dd\bW f \in \ran \bW(Q)$.

%We will use the notation 
%\[ \La \bW\Ra\ci Q=|Q|^{-1} {\bW(Q)} \text{ and } \La \bW f\Ra\ci Q=|Q|^{-1} \left(\int_Q d\bW f \right).\]
%The above definitions immediately imply that each
%\[ \Delta_Q^{\bW} f = \sum_{R \in \Ch(Q)}  \bW(R)^{-1} \int_R d \bW f  \1_R - \bW(Q)^{-1} \int_Q d\bW f  \1_Q.\]

Then, it is not too hard to see that $\E\ci Q^\bW$ is the orthogonal projection in $L^2(\bW)$ onto the subspace of constants  $\{\1\ci Qe: e\in \F^d\}$. To prove this fact, one needs to use properties of the Moore--Penrose pseudoinverse to show that $\1_Q e= \E\ci Q^\bW( \1_Q e)$ in $L^2(\bW)$ for all $Q \in \mathcal{D}$, $e \in \mathbb{F}^d.$ It can also be shown that $\E\ci Q^\bW  \Delta\ci Q^\bW  = \Delta\ci Q^\bW \E\ci Q^\bW =0$, that $\Delta\ci Q^\bW$ is an orthogonal projection, and that the subspaces generated by  $\Delta\ci Q^\bW$ and  $\Delta\ci R^\bW$ are orthogonal whenever $Q \ne R.$

%If $f$ is constant on $Q$, then $\Delta_Q^{\bW} f  \equiv 0.$ It is also not hard to show that  $\int_Q d\bW \Delta_Q^{\bW} f \equiv 0$ for any $Q$ and  measurable $f$. Finally, it is an easy computation to show that 
%\[  \left \langle \Delta_Q^{\bW} f, \Delta_R^{\bW} g \right \rangle_{L^2(\bW)} =0,\]
%as long as $Q \ne R$ (Here, $Q \ne R$ includes equality as sets but different ranks.) Moreover, $\Delta_Q^{\bW}$ is a self-adjoint operator on $L^2(\bW).$ 

%For $Q\in \cD$, let $H\ci Q$ be the space of (non-weighted) Haar functions, $H\ci Q:=\Delta\ci Q(L^2(\sigma))$. $H\ci Q$ is the subspace of $L^2(\sigma)$ spanned by functions $h\ci Q$ supported on $Q$, constant on each element of $\Ch Q$, and orthogonal to constant vectors.
%Similarly, for the weighted case, let $H\ci Q^{\bW}:=\Delta^{\bW}\ci Q(L^2(\bW))$ be the analogous subspace of $L^2(\bW)$ spanned by functions $h\ci Q^{\bW}$.
%
%
%\begin{rem}
%The vector-valued Haar functions $h\ci Q$ and $h\ci Q^{\bW}$ should not be interpreted as scalar-valued Haar functions times constant vectors in $\F^d$. Throughout the paper, if any reference to scalar Haar functions is needed, it will be clearly indicated and reflected in the notation.
%\end{rem}

\subsection{Well localized operators} To state and prove the main results, it is convenient to introduce the formalism of \emph{well localized operators} between weighted spaces, rather than work directly with operators that have the structure of a (generalized) big Haar shift.  Earlier, we defined the operator $T\ci \bW$, $T\ci\bW f := T(\bW f)$ as the integral \eqref{T_W}, provided that the integral is well defined. But to verify boundedness, we only need to know the bilinear form of the operator on a dense set. 

Let $\mathcal{L}$ denote the set of finite linear combinations of functions of the form $\1\ci Qe$, with $Q\in\cD$ and $e\in\F^d$. By Lemma \ref{dense}, this set is dense in $L^2(\bW)$ and so, it suffices to know how to compute
\begin{align*}
\La T\ci {\bW} \1\ci Q e, \1\ci R v\Ra\ci{L^2( \bV)} & = \iint_{\cX \times \cX} \Big\La \dd\bW(y) K(x, y) \1\ci Q(y) e, \dd\bV(x) \1\ci R(x) v\Big\Ra\ci{\F^d}\\
&= \iint_{\cX \times \cX} \Big\La W(y) K(x, y)  \1 \ci Q(y) e, V(x) \1\ci R(x) v\Big\Ra\ci{\F^d} \dd\bw(y)\dd\bv(x) 
\end{align*}
for all $Q, R\in \cD$  and $e, v\in\F^d$. To be precise, we say:

\begin{defin} An operator $T\ci \bW$ \emph{acts formally from $L^2(\bW)$ to $L^2(\bV)$} if the bilinear form 
\begin{align}
\label{T_W-formal}
\La T\ci \bW \1\ci Q e,\1\ci R v\Ra\ci{L^2(\bV)}
\end{align}
is well defined for all $Q, R\in \cD$ and all $e,v\in \F^d$. Then, the formal adjoint $T^*\ci \bV$ is given by 
\[
\La T\ci \bW \1\ci Q e,\1\ci R v\Ra\ci{L^2(\bV)} = \La  \1\ci Q e, T\ci{\bV}^*\1\ci R v\Ra\ci{L^2(\bW)} .
%\La T^*\ci \bV \1\ci Q e,\1\ci R v\Ra\ci{L^2(\bW)}=\La \1\ci Q e,T\ci \bW\1\ci R v\Ra\ci{L^2(\bV)}.
\]
As part of the definition, we also require a very weak continuity property, namely that
\begin{align}
\label{w-cont}
\La T_{\bW} \1\ci Q e , \1\ci R v\Ra\ci{L^2(\bV)} 
& = \sum_{S\in\Ch Q} \La T_{\bW} \1\ci S e , \1\ci R v\Ra\ci{L^2(\bV)}  \\
\notag
& = \sum_{S\in\Ch R} \La T_{\bW} \1\ci Q e , \1\ci S v\Ra\ci{L^2(\bV)} ;
\end{align}
this property is nontrivial only if $Q$ or $R$ have infinitely many children. 
\end{defin}

Consider the set $\cL$ of finite linear combinations of functions of the form $\1\ci Qe$, with $Q\in\cD$ and $e\in\F^d$. 
If the bilinear form \eqref{T_W-formal} is defined, then
\begin{align}
\label{T_W-formal-01}
\La T\ci\bW f, g\Ra\ci{L^2(\bV)} = \La  f, T\ci\bV^* g\Ra\ci{L^2(\bW)}
\end{align}
is well defined for all $f, g\in\cL$.
Since $\Delta^\bW\ci Q f \in\cL$ for $f\in\cL$, the expression $\La T\ci \bW \Delta\ci Q^\bW f, \Delta\ci R^\bV g \Ra\ci{L^2(\bV)}$ is also well defined for all $f, g\in\cL$. Thus  the expression $\Delta\ci R^\bV T\ci \bW \Delta\ci Q^\bW$ is well defined, in the sense that its bilinear form is well defined for $f,g\in\cL.$  

\begin{defin}
\label{d:loc-op}
An operator 
$T\ci \bW$ acting formally from $L^2(\bW)$ to $L^2(\bV)$ is said to be \emph{localized} if for all $e,v \in \mathbb{F}^d$,
\[\La  T\ci \bW \1\ci Q e, \1\ci R v \Ra\ci{L^2(\bV)}=0\]
whenever $Q, R \in \mathcal{D}$ share no common ancestors. % and if  $T\ci \bW$ and $T \ci \bV^*$ satisfy 
% \[
% \La T\ci \bW \1\ci Q e_1, \1 \ci R e_2\Ra_{L^2(\bV)}=0 = \La T\ci \bV \1\ci Q e_1, \1 \ci R e_2 \Ra_{L^2(\bW)}
% \]
% whenever $Q, R \in \mathcal{D}$ share no common ancestors and $e_1, e_2 \in \mathbb{F}^d.$
\end{defin}

\begin{defin}
\label{df:LTr}
An operator $T\ci \bW$ acting formally from $L^2(\bW)$ to $L^2(\bV)$ is called $r$-\emph{lower triangular} if for all $R, Q \in \cD$ and $e \in \mathbb{F}^d$, %with $\rk R\geq \rk Q-1$ and all $e\in \F^d$,
\[
\Delta\ci R^{\bV} T\ci\bW \1\ci Q e=0
\]
if either
\begin{enumerate}
\item $R\not\subset Q$ and $\rk R\geq r+ \rk Q$, or
\item $R\not\subset Q^{(r+1)}$ and $\rk R\geq \rk Q-1$;
\end{enumerate}
here $Q^{(r+1)}$ is the order $r+1$ ancestor of $Q$, i.e.~$Q^{(r+1)}$ is the unique atom with  $Q \subset Q^{(r+1)}$ and $\rk Q^{(r+1)}=\rk Q-(r+1)$.
%
%whenever $R\not\subset Q^{(r+1)}$ or $\rk R\geq r+ \rk Q$ and $R\not\subset Q$. Here $Q^{(r+1)}$ is the order $r+1$ ``ancestor'' of $Q$, i.e. the set in $\cD$ which contains $Q$ and has the property that $\rk Q^{(r+1)}=\rk Q-(r+1)$.
\end{defin}

As mentioned earlier, $T\ci \bW$ is defined via a bilinear form. So, in Definition \ref{df:LTr}, the statement $\Delta\ci R^{\bV} T\ci\bW \1\ci Q e=0$ should be interpreted as 
\[ \left \langle \Delta\ci R^{\bV} T\ci\bW \1\ci Q e, g \right \rangle_{L^2(\bV)} = \left \langle  T\ci\bW \1\ci Q e, \Delta\ci R^{\bV} g \right \rangle_{L^2(\bV)} =0,\]
for all $g \in \mathcal{L}.$ More generally, for any $f \in \mathcal{L}$, the notation  $T\ci \bW f$ or $\Delta\ci R^{\bV}  T\ci \bW f$ should be interpreted in terms of the bilinear form.

\begin{defin}
An operator $T\ci \bW$ acting formally from $L^2(\bW)$ to $L^2(\bV)$ is  said to be \emph{well localized with radius $r$} if it is localized and if both  $T\ci \bW$ and its formal adjoint $T\ci \bV^*$ are $r$-lower triangular. 
\end{defin}   

\begin{rem}
\label{r:vsNTV}
This definition is very similar to the definition of well localized operators from \cite{NTV}, with two exceptions. First, there was a typo in \cite{NTV}, and in the language of this paper, the definition in \cite{NTV}  only required that $\rk R\ge \rk Q$  in condition \cond2 of the above Definition \ref{df:LTr}. This was a typo; the inequality $\rk R\ge \rk Q$ is not sufficient to get the results in \cite{NTV}. For more details, see the discussion in \cite{bw15}. 
%{\color{red} Is it a correct reference? Yes}

The other difference, which is more essential, is that in \cite{NTV}, it was not required  for the operator $T\ci \bW$ to be \emph{localized} in the sense of the above Definition \ref{d:loc-op}. 
In this paper, by requiring our operators to be localized, we are able to get better estimates than those in \cite{NTV}. In particular, we do not require any bounds on the number of children of a cube $Q\in\cD$. In \cite{NTV}, it was assumed that each cube had at most $N$ children for some fixed $N \in \mathbb{N}$, and the estimates depended on this bound.  

Since all of the examples we have in mind (all of them were presented earlier) give rise to localized operators, we included this requirement in the definition of well localized operators. Thus, we are able to get better estimates than those in \cite{NTV}, even for the case of scalar measures. 
\end{rem}

\subsection{From band operators to well localized operators}

%\begin{thm}\label{well-loc}
%Let $T=T\ci W$ be a well localized operator of radius $r$ acting formally from $L^2(W)$ to $L^2(V)$. $T\ci W$ extends to a bounded from $L^2(W)$ to $L^2(V)$ if and only if the following two conditions hold for all $Q\in \cD$:
%\begin{enumerate}
%\item$ \|T\ci W\1\ci Q\ e\|\ci{L^2(V)}\le C_1 \|\1\ci Q e\|\ci{L^2(W)}$
%\item$ \|T^*\ci V\1\ci Q\ e\|\ci{L^2(V)}\le C_2 \|\1\ci Q e\|\ci{L^2(V)}$
%\end{enumerate}
%Furthermore, the norm of $T$ is bounded by a constant depending only on the dimension $d$, the sum $C_1+C_2$, and $r$.
%\end{thm}

%In the case when $\bV$ and $\bW$ are absolutely continuous with respect to the underlying measure $\sigma$, i.e. when 
%\[ d\bW = W(x) d\sigma \ \text{ and } \ d\bV = V(x) d \sigma\]
%for positive semidefinite matrix valued functions $W(x)$ and $V(x)$, band operators and well localized operators are related by the following lemma:

Now we will show that if $T$ has the structure of  a generalized big Haar shift of complexity  $r$, see Definition \ref{d:BigHaarShift}, and if $T$ also satisfies several additional assumptions, then the operator $T\ci\bW$, $T\ci\bW f = T(\bW f)$ is a well localized operator of radius $r$. 

We assume that we only have finitely many terms $T\ci Q$ in the representation \eqref{BigHaarShift} and that each block $T\ci{ Q}$ is represented by an integral operator with a bounded kernel. Note that the latter assumption is always true if all $Q \in \mathcal{D}$ have finitely many children; for the generalized big Haar shifts, it is just postulated. 

The above two assumptions imply that the bilinear form \eqref{T_W-formal-01} is well defined for $f, g\in \cL$. Moreover, the facts that the kernel is bounded and $\bW, \bV$ are finite on atoms can be used to show that $T\ci\bW$ and its formal adjoint satisfy the weak continuity property \eqref{w-cont}. Thus $T\ci\bW$ is well defined as an operator acting formally from $L^2(\bW)$ to $L^2(\bV)$. In fact, since $T$ is an integral operator with a bounded, compactly supported kernel, it can be shown that the bilinear form \eqref{T_W-formal-01} is well defined for all $f\in L^2(\bW)$, $g\in L^2(\bV)$, so in fact $T\ci\bW$ is a bounded operator $L^2(\bW)\to L^2(\bV)$. 

\begin{lm}\label{lem:well}
Let $T$ have the structure of a generalized big Haar shift of complexity $r$, satisfying the assumptions above. 
Then for matrix-valued measures $\bW$ and $\bV,$ the operator $T\ci\bW$, $T\ci\bW f= T(\bW f)$, acting formally from $L^2(\bW)$ to $L^2(\bV)$ 
%If $\bW$, $\bV$ are matrix weights such that the entries of $W(x)$ and $V(x)$ 
%are in $L^2_{loc}(\sigma)$ and if $T$ is a generalized band operator of 
%radius $r$, then 
%$T\ci \bW$ 
is a well localized operator of radius $r$.  
\end{lm}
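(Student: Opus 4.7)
The plan is to verify the three defining properties of a well localized operator of radius $r$: localization (Definition \ref{d:loc-op}), $r$-lower triangularity of $T\ci\bW$, and $r$-lower triangularity of its formal adjoint $T\ci\bV^*$. The preceding discussion already establishes that $T\ci\bW$ acts formally from $L^2(\bW)$ to $L^2(\bV)$ and satisfies the weak continuity property \eqref{w-cont}, so only these three items remain. The common tool throughout is the block decomposition $T = \sum_{Q'\in\cD} T\ci{Q'}$ together with the two structural constraints on each kernel $K\ci{Q'}$: it is supported on $Q'\times Q'$ as sets, and it is constant on $R'\times S'$ whenever $R',S'\in\Ch^{r+1} Q'$.

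For localization, I would expand the bilinear form as
\[
\La T\ci\bW \1\ci Q e, \1\ci R v\Ra\ci{L^2(\bV)} = \sum_{Q' \in \cD} \La T\ci{Q'}(\bW \1\ci Q e), \1\ci R v\Ra\ci{L^2(\bV)},
\]
a finite sum by the standing assumption. The $Q'$-term vanishes unless $Q'\cap Q \ne\emptyset$ and $Q'\cap R \ne\emptyset$ as sets, because $K\ci{Q'}$ is supported on $Q'\times Q'$. By the nesting/disjointness dichotomy for atoms in a filtered space, any such $Q'$ must set-contain both $Q$ and $R$, which forces $Q'$ (or a suitable atom with the same underlying set at the correct rank) to be a common atomic ancestor of $Q$ and $R$. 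If $Q$ and $R$ share none, no such $Q'$ exists and the sum vanishes.

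The main work is $r$-lower triangularity. The key observation I will exploit is that $T\ci{Q'}(\bW\1\ci Q e)$ is supported on $Q'$ and, as a function of $x$, is constant on each $R' \in \Ch^{r+1} Q'$ (because $K\ci{Q'}(x,\cdot)$ is). Consequently, $\Delta\ci R^\bV$ annihilates the block $T\ci{Q'}(\bW\1\ci Q e)$ in either of two favorable configurations: (a) $R \cap Q' = \emptyset$ as sets, or (b) $R$ is set-contained in a single atom $R_0 \in \Ch^{r+1} Q'$, so that the block is already constant on $R$. My goal is then to show, by a case analysis on how $Q'$ sits relative to $Q$ and $R$, that under either condition (i) or (ii) of Definition \ref{df:LTr}, every contributing $Q'$ lands in (a) or (b). The rank inequalities $\rk R \ge r+\rk Q$ and $\rk R \ge \rk Q - 1$, together with the exclusions $R \not\subset Q$ and $R \not\subset Q^{(r+1)}$, are exactly what rule out the remaining ``bad'' configurations in which $R$ straddles several pieces of $\Ch^{r+1} Q'$. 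Once $T\ci\bW$ is handled, the corresponding statement for $T\ci\bV^*$ follows by a symmetric argument, since $K\ci{Q'}$ is constant on $\Ch^{r+1} Q'$ in both of its variables.

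The principal obstacle will be the case analysis itself, which is delicate because in a general filtered atomic space, distinct atoms may share the same underlying set at different ranks. One must therefore carefully distinguish set-theoretic containment from the atomic (rank-augmented) containment $\subset$ used throughout the paper and keep track of both simultaneously. This rank bookkeeping is what makes condition (ii) of Definition \ref{df:LTr} sharp, as discussed in Remark \ref{r:vsNTV}, and is where the inequality $\rk R \ge \rk Q - 1$ (versus the typo $\rk R \ge \rk Q$ of \cite{NTV}) plays an essential role.
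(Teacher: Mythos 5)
Your skeleton is the same as the paper's: decompose $T=\sum_{Q'}T\ci{Q'}$, use that $T\ci{Q'}(\bW\1\ci Q e)$ is supported on $Q'$ and constant on each atom of $\Ch^{r+1}Q'$, and kill every block with $\Delta\ci R^\bV$ via your configurations (a) ($R\cap Q'=\varnothing$) or (b) ($R$ inside a single element of $\Ch^{r+1}Q'$). The gap is that you stop exactly where the proof's content lies: you assert that hypotheses (i), (ii) of Definition \ref{df:LTr} ``rule out the straddling configurations'' and defer the case analysis as the principal obstacle, without exhibiting the mechanism. The missing step is the rank cap on contributing blocks. Under (i), $\rk R\ge r+\rk Q$ and $R\not\subset Q$ force $R\cap Q=\varnothing$; a block not already in case (a) has $Q'\cap R\ne\varnothing$ and (to contribute) $Q'\cap Q\ne\varnothing$, and it cannot satisfy $Q'\subset Q$ (then $Q'\cap R\subset Q\cap R=\varnothing$), so $Q\subsetneq Q'$, whence $\rk Q'\le\rk Q-1$ and $\rk R\ge \rk Q+r\ge \rk Q'+r+1$, which is exactly case (b). Under (ii), $\rk R\ge\rk Q-1$ and $R\not\subset Q^{(r+1)}$ give $R\cap Q^{(r+1)}=\varnothing$; the same reasoning forces $Q^{(r+1)}\subsetneq Q'$, so $\rk Q'\le\rk Q-r-2$ and again $\rk R\ge\rk Q-1\ge\rk Q'+r+1$. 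This two-line rank computation is precisely the paper's proof (its inequalities \eqref{rkR>rkS} and \eqref{rkR>rkS-01}); without it your write-up is a plan rather than a proof, since nothing in it yet uses the specific thresholds $r+\rk Q$ and $\rk Q-1$.

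A smaller repair: in the localization step, the claim that every contributing $Q'$ must set-contain both $Q$ and $R$ is false (for instance $R\subset Q'\subsetneq Q$ is a contributing configuration in which $Q'$ contains neither). What the support condition actually gives is that $Q'$ is comparable to each of $Q$ and $R$, and in every resulting case $Q$ and $R$ are joined through a common ancestor (possibly $Q$, $R$, or $Q'$ itself), so the conclusion stands but the stated justification does not. The adjoint reduction by symmetry is fine and matches the paper.
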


\begin{proof}
The fact that the operator $T\ci\bW$ is localized, see Definition \ref{d:loc-op}, is obvious.  Now, we will show that $T\ci \bW$ is $r$-lower triangular; then by the symmetry, we obtain the same result for $T^*\ci\bV$. 

Observe that if $Q \in \cD, e\in\F^d,$ the assumptions on $T$ imply that the function $T(\bW \1\ci Q e )$ is well-defined and in $L^2(\bV).$ Then to prove that $T\ci \bW$ is $r$-lower triangular, it suffices to show that outside of $Q^{(r+1)}$, $T(\bW \1\ci Q e )$ is constant on cubes $R$ with $\rk R\ge \rk Q -1$,  and that outside of $Q,$ it is constant on cubes $R$ with  $\rk R\ge \rk Q +r$. 

Let us analyze when $T\ci{S}(\bW \1\ci Q e)$ can be non-zero and how it behaves in that case.  First, observe that $T\ci S (\bW \1\ci Q e)$ is non-zero outside of $Q$ only if $Q\subsetneq S$. Since $\rk S\le \rk Q-1$ for  $ Q\subsetneq S$, the condition $\rk R \ge \rk Q + r$ implies that 
\begin{align}
\label{rkR>rkS}
\rk R \ge \rk Q + r \ge \rk S+ 1 + r. 
\end{align}
We know  that the kernel of $T\ci S$ is constant on sets $S'\times S''$ with $S', S'' \in\Ch^{r+1} S$, and therefore, $T\ci S (\bW \1\ci Q e)$ is constant on cubes $R$ such that
\begin{align*}
\rk R \ge \rk S + r + 1. 
\end{align*}
So, if $R\cap Q=\varnothing$ and $\rk R \ge \rk Q + r$, we can conclude from \eqref{rkR>rkS} that $T\ci S (\bW \1\ci Q e)$ is constant on $R$. 
Similarly, if $T\ci S (\bW \1\ci Q e)$ does not vanish outside of $Q^{(r+1)}$, then $Q^{(r+1)}\subsetneq S$, so
\begin{align*}
\rk S \le \rk Q^{(r+1)} - 1 = \rk Q - (r+1) -1 =\rk Q - r - 2,
\end{align*}
or equivalently 
\begin{align*}
\rk S + r + 1 \le \rk Q-1.
\end{align*}
 The condition $\rk R \ge \rk Q -1$ then implies  that  
\begin{align}
\label{rkR>rkS-01}
\rk R \ge \rk Q - 1 \ge \rk S+ r +1 . 
\end{align}
But, as we discussed above,  $T\ci S (\bW \1\ci Q e)$ is constant on cubes $R$ such that $\rk R \ge \rk S + r + 1$, so \eqref{rkR>rkS-01} implies that  outside of $Q^{(r+1)},$ the function $T\ci S (\bW \1\ci Q e)$ is constant on cubes $R$ with $\rk R \ge\rk Q-1$.   
\end{proof}

\section{Main results}
\noindent
%We are now ready to state our main result: 
\subsection{Estimates of well localized operators}
For a cube $Q\in\cD,$ define $D_Q^{\bW,r}$ to be the collection of functions 
\begin{align*} D_Q^{\bW,r} := \left \{ f_Q = \sum_{R \in \Ch^r Q} \Delta\ci R^{\bW} f : \ \  f\in \cL \right\}.
\end{align*}
Given this definition, we can state our first main result.
\begin{thm}\label{well-loc-rel}
Let $T\ci {\bW}$ be a well localized operator of radius $r$ acting formally from $L^2(\bW)$ to $L^2(\bV)$. Then $T\ci \bW$ extends to a bounded operator from $L^2(\bW)$ to $L^2(\bV)$ if and only if the following conditions 
\begin{enumerate}
\item $ \|\1\ci Q T\ci \bW\1\ci Q\ e\|\ci{L^2(\bV)}\le \fT_1 \|\1\ci Q e\|\ci{L^2(\bW)}$ for all $e\in\F^d$;
%\item[(i*)]$ \|\1\ci QT^*\ci \bV\1\ci Q\ e\|\ci{L^2(\bW)}\le C_1^* \|\1\ci Q e\|\ci{L^2(\bV)}$
\item $\|\1\ci Q T\ci \bW f\ci Q\|\ci{L^2(\bV)}\leq \fT_2\|f\ci Q\|\ci{L^2(\bW)}$ for all $f_Q\in D_Q^{\bW, r}$; 
%\item[(ii*)] $\|T^*\ci \bV g\ci Q\|_{L^2(\bV)}\leq C_2^*\|g\ci Q\|_{L^2(\bV)},$ 
\end{enumerate}
and their dual counterparts  (corresponding conditions for $T\ci\bV^*$ with $\bV$ and $\bW$ interchanged) hold for all $Q\in \cD$.
%for all $f\ci Q=\sum_{R\in \Ch^{(2r+1)}(Q)} h\ci R^{\bW}$ and 
%$g\ci Q=\sum_{R\in \Ch^{(2r+1)}(Q)} h\ci R^{\bV}.$ 
Furthermore,
\[
\| T\ci{\bW} \|\ci{L^2(\bW) \rightarrow L^2(\bV) }\leq \Bigl( C(d)^{1/2} +1/2\Bigr)(\fT_1+\fT_1^*)+  (r+1)^{1/2}(\fT_2+\fT_2^* );
\]
here $\fT_{1}^*$, $\fT_{2}^*$ are the constants from the duals to the testing conditions \textup{\cond1}, \textup{\cond2} respectively, and $C(d)$ is the constant from the Matrix Carleson Embedding Theorem (Theorem \ref{cet} below). 

Moreover, for the best possible bounds $\fT_k$, $\fT_k^*$ we trivially have 
\begin{align}
\label{fT<norm}
\fT_k, \, \fT_k^* \le \| T\ci{\bW} \|\ci{L^2(\bW) \rightarrow L^2(\bV) }\qquad k=1,2.
\end{align}

\end{thm}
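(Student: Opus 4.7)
\medskip
\noindent\textbf{Proof proposal.}
Necessity of (i)--(ii) and their duals is essentially trivial: take $f=\1\ci Q e$ or $f=f\ci Q\in D_Q^{\bW,r}$, multiply/subtract with $\1\ci Q$ in $L^2(\bV)$, and conclude $\fT_1,\fT_2\le \|T\ci\bW\|$; the same argument for $T\ci\bV^*$ gives \eqref{fT<norm}. So the work is in the sufficiency direction, i.e.\ obtaining the quantitative norm bound from (i), (ii) and their duals. Throughout I would work with $f,g\in\cL$ (dense by Lemma \ref{dense}) and test the bilinear form $\La T\ci\bW f,g\Ra\ci{L^2(\bV)}$.

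The plan is to expand with respect to the \emph{weighted} martingale structures on the two sides,
\begin{equation*}
f=\sum_{Q\in\cD}\Delta\ci Q^{\bW} f,\qquad g=\sum_{R\in\cD}\Delta\ci R^{\bV} g,
\end{equation*}
(plus a top average that vanishes after truncating to finitely many active cubes in $\cL$), and then to decompose
\begin{equation*}
\La T\ci\bW f,g\Ra\ci{L^2(\bV)}=\sum_{Q,R\in\cD}\La T\ci\bW\Delta\ci Q^{\bW} f,\,\Delta\ci R^{\bV} g\Ra\ci{L^2(\bV)}.
\end{equation*}
The localization hypothesis forces $Q$ and $R$ to share a common ancestor, and the $r$-lower triangularity of $T\ci\bW$ and $T\ci\bV^*$ (Definition \ref{df:LTr}) forces the remaining $(Q,R)$ to fall into three geometric regimes: (A) $R\subset Q$ with $\rk R\ge \rk Q+r$, (B) the symmetric case $Q\subset R$ with $\rk Q\ge \rk R+r$, and (C) a ``paraproduct'' regime where $R\subset Q^{(r+1)}$ and $|\rk R-\rk Q|\le r$ (and its dual). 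I would handle (A) and (B) together by regrouping: for fixed ``top'' cube $P$ of rank $\rho-r$, the partial sum in region (A) reads $\La\1\ci P T\ci\bW f\ci P,\,g\ci P'\Ra$ with $f\ci P\in D_P^{\bW,r}$, so testing condition (ii) applies. Summing in $P$, Cauchy--Schwarz in the rank parameter produces the factor $(r+1)^{1/2}$ and, together with orthogonality of the $\Delta\ci Q^\bW$'s and $\Delta\ci R^\bV$'s, yields a bound by $(r+1)^{1/2}(\fT_2+\fT_2^*)\|f\|\ci{L^2(\bW)}\|g\|\ci{L^2(\bV)}$.

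The decisive and most delicate piece is the paraproduct regime (C). Following the strategy alluded to in Section \ref{mainproof} and Section \ref{paraproduct}, I would replace $\Delta\ci Q^\bW f$ (for $Q$ an ancestor of $R$, or the symmetric dual) by a telescoping sum involving the weighted averages $\La f\Ra\ci{Q}^\bW$, so that the remaining bilinear form splits into two paraproduct-type sums in which the ``function'' slot carries $\La f\Ra\ci Q^\bW$ (resp.\ $\La g\Ra\ci R^\bV$) while the ``coefficient'' slot is $\1\ci Q T\ci\bW \1\ci Q e$-type data controlled by the testing condition (i) (resp.\ its dual). Each of the two paraproducts is then estimated by the Matrix Carleson Embedding Theorem (Theorem \ref{cet}): the required Carleson constant is exactly what (i), resp.\ its dual for $T\ci\bV^*$, supplies, producing the factor $C(d)^{1/2}(\fT_1+\fT_1^*)$; the leftover ``telescoping error'' (coming from the coarsest level of averages) gives the additional $\tfrac12(\fT_1+\fT_1^*)$. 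Adding the three contributions yields the claimed estimate.

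\medskip
\noindent\textbf{Main obstacle.} The genuine difficulty is the paraproduct step in the matrix-valued setting. Scalar arguments rely freely on commuting averages, sign splittings, and the classical Carleson embedding, none of which survives cleanly with two non-commuting matrix weights $\bW,\bV$ on the two sides; this is precisely the reason the paper depends on the recent matrix-weighted Carleson embedding of \cite{ct15}. A secondary, but important, bookkeeping obstacle is obtaining \emph{polynomial} dependence $(r+1)^{1/2}$ rather than exponential: this forces the grouping in regimes (A), (B) to be done level-by-level with a single Cauchy--Schwarz across the $r+1$ generations, exploiting localization to avoid paying a factor per child — which is exactly the novelty relative to \cite{NTV,bw15} highlighted in Remark \ref{r:vsNTV}.
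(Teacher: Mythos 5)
Your overall architecture (decompose the bilinear form over weighted martingale differences, use localization to confine to a common ancestor, split into a deep regime, its dual, and a band regime, and aim for $C(d)^{1/2}(\fT_1+\fT_1^*)+(r+1)^{1/2}(\fT_2+\fT_2^*)$) is the right one, but you have attached the two testing conditions to the wrong regimes, and this breaks both halves of the argument. In the paper it is the \emph{deep} pairs, $R\subset Q$ with $\rk R>\rk Q+r$ (and the dual family), that are absorbed by the paraproducts $\Pi^\bW\ci T$ and $(\Pi^\bV\ci{T^*})^*$: summing $\Delta\ci Q^\bW f$ over all ancestors $Q$ of $R^{(r)}$ telescopes to the average $\La f\Ra^\bW_{R^{(r)}}$, which is precisely why Theorem \ref{cet} (whose hypothesis is a square-function bound on $\int_Q \dd\bW f$) is applicable, and why its Carleson condition is verified from the indicator testing condition (i) together with Lemma \ref{l:T-para}, which only permits replacing $\1\ci Q$ by $\1\ci{Q_0}$ for outputs $R\in\Ch^r Q$, i.e.\ for deep outputs (cf.\ \eqref{test-03} and Lemma \ref{lem:parbdd}). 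What survives the subtraction of the paraproducts is, by Lemma \ref{lem:replacement}, supported on $|\rk Q-\rk R|\le r$, and only there does condition (ii) enter: one groups by the $r$-th ancestor $S$, splits ranks mod $r+1$, and a single Cauchy--Schwarz over the $r+1$ residues yields $(r+1)^{1/2}(\fT_2+\fT_2^*)$. Your plan does the opposite. In your regime (A) the gap $\rk R-\rk Q$ is \emph{unbounded}, so there are not ``$r+1$ generations'' to Cauchy--Schwarz over: if you bound each group $\La \1\ci P T\ci\bW f\ci P,\,\cdot\,\Ra$ by (ii), the $g$-piece you must pair with is $\sum_{R\subset P,\,\rk R\ge \rk P+2r}\Delta\ci R^\bV g$, and each fixed $\Delta\ci R^\bV g$ occurs in that sum for \emph{every} sufficiently coarse ancestor $P$ of $R$; the resulting sum $\sum_P\|f\ci P\|\ci{L^2(\bW)}\|g\ci P\|\ci{L^2(\bV)}$ is not dominated by $\|f\|\,\|g\|$ (the $g$-factors are wildly non-orthogonal), and no regrouping repairs this without first telescoping the ancestor sum into an average --- i.e.\ without the paraproduct and the matrix Carleson embedding.

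Symmetrically, your ``paraproduct regime'' (C) is exactly the part that is \emph{not} a paraproduct: there the input differences sit within $r$ ranks of the output, nothing telescopes into averages $\int_Q\dd\bW f$, so Theorem \ref{cet} has nothing to act on, and condition (i) (testing on indicators, which through \eqref{test-03} only controls outputs at depth $\ge r$ below the testing cube) gives no grip on this band --- this band is precisely why testing on the class $D_Q^{\bW,r}$, i.e.\ condition (ii), is in the theorem at all. A secondary but genuine error: the top averages do \emph{not} vanish for $f\in\cL$; relative to a common ancestor $Q_0$ one always retains $\E\ci{Q_0}^\bW f$ and $\E\ci{Q_0}^\bV g$, and they cannot in general be removed by enlarging $Q_0$ (Remark \ref{r:weak}). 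The paper pays for these terms with condition (i), its dual, and the weak bound $\fT_3\le\tfrac12(\fT_1+\fT_1^*)$ from Theorem \ref{t:well-loc-est-02}, which is the source of the extra $\tfrac12(\fT_1+\fT_1^*)$ in the statement, rather than a ``telescoping error.'' Your necessity argument and \eqref{fT<norm} are fine. To repair the sufficiency, swap the roles: subtract $\Pi^\bW\ci T$ and $(\Pi^\bV\ci{T^*})^*$, bound them by $C(d)^{1/2}(\fT_1+\fT_1^*)$ via Lemma \ref{lem:parbdd}, and estimate the remaining band $|\rk Q-\rk R|\le r$ with condition (ii) and the mod-$(r+1)$ splitting as in Section \ref{mainproof}.
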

%For the particular case of absolutely continuous weights, Lemma \ref{lem:well} applies, and Theorem \ref{well-loc-rel} gives the following result: 
%\begin{thm}\label{band}
%Let $V$ and $W$ be matrix weights and let $T$ be a band operator with radius $r$. Then $M\ci{V^{1/2}}TM\ci{W^{1/2}}$ extends to a bounded operator in $L^2$ if and only if for all $Q\in \cD$ and all vectors $e\in \F^d$,
%\begin{enumerate}
%\item $ \|T W\1\ci Q\ e\|\ci{L^2(V)}\le C_1 \|\1\ci Q e\|\ci{L^2(W)}$
%\item$ \|T^* V\1\ci Q\ e\|\ci{L^2(V)}\le C_2 \|\1\ci Q e\|\ci{L^2(V)}$.
%\end{enumerate}
%Furthermore,
%\[
%\|M\ci{V^{1/2}}TM\ci{W^{1/2}}\|\ci{L^2\rightarrow L^2}\leq C(C_1+C_2, r, d).
%\]
%\end{thm}

%As in \cite{NTV}, the conditions of Theorem \ref{band} can be relaxed to yield the following result:

\begin{rem*}
In the case when each $Q\in\cD$ has at most $N$ children ($N<\infty$), condition \cond2 follows from the testing condition $\|T\ci\bW \1\ci Q e\|\ci{L^2(\bV)} \le \fT_3\|\1\ci Qe\|\ci{L^2(\bW)}$. In this case, one can estimate $\fT_2 \le C(r, N) \fT_3$ and obtain similar estimates for the dual condition. 
That is very similar to the approach used in \cite{NTV}. 
\end{rem*}

Condition \cond1 in Theorem \ref{well-loc-rel} can be slightly relaxed. Given a well localized operator $T\ci\bW$ and cube $Q \in \mathcal{D}$, define its \emph{truncation} $T\ci\bW^Q$ by
\begin{align}
\label{T_W^Q}
T\ci\bW^Q f = \sum_{R\in\cD(Q)} \Delta\ci R^\bV T\ci\bW f, 
\end{align}
and similarly for the dual $T^*\ci\bV$. 

\begin{thm}
\label{t:well-loc-est-02}
Let $T\ci {\bW}$ be a well localized operator of radius $r$ acting formally from $L^2(\bW)$ to $L^2(\bV)$. Then $T\ci \bW$ extends to a bounded operator from $L^2(\bW)$ to $L^2(\bV)$ if and only if the  conditions 
\begin{enumerate}
\item $ \| T\ci \bW^Q\1\ci Q e\|\ci{L^2(\bV)}\le \fT_1 \|\1\ci Q e\|\ci{L^2(\bW)}$ for all $e\in\F^d$;

\item $\| T^Q\ci \bW f\ci Q\|\ci{L^2(\bV)}\leq \fT_2\|f\ci Q\|\ci{L^2(\bW)}$ for all $f_Q\in D_Q^{\bW, r}$, 

\end{enumerate}
their dual counterparts  (corresponding conditions for $T\ci\bV^*$ with $\bV$ and $\bW$ interchanged) 
and the following weak type estimate 
\begin{enumerate}
\setcounter{enumi}{2}
\item $\left| \La T\ci \bW \1\ci Q e, \1\ci Q v\Ra\ci{L^2(\bV)}\right| \le \fT_3 
\|\1\ci Q e\|\ci{L^2(\bW)}  \|\1\ci Q v\|\ci{L^2(\bV)}$ for all $e, v\in \F^d$, 
\end{enumerate}
hold for all $Q\in \cD$. Furthermore,
\[
\| T\ci{\bW} \|\ci{L^2(\bW) \rightarrow L^2(\bV) }\leq C(d)^{1/2}(\fT_1+\fT_1^*)+  (r+1)^{1/2}(\fT_2+\fT_2^* ) +  \fT_3; 
\]
here $\fT_{1}^*$, $\fT_{2}^*$ are the constants from the duals to the testing conditions \textup{\cond1}, \textup{\cond2} respectively, and $C(d)$ is the constant from the Matrix Carleson Embedding Theorem (Theorem \ref{cet} below). 
Moreover, for the best possible bounds $\fT_k$, $\fT_k^*$ we trivially have 
\begin{align}
\label{fT<norm-01}
\fT_k, \, \fT_k^* \le \| T\ci{\bW} \|\ci{L^2(\bW) \rightarrow L^2(\bV) }\qquad k=1,2, 3.
\end{align}
\end{thm}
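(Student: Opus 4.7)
\emph{Necessity.} The three testing inequalities are straightforward. The operator $T\ci\bW^Q$ equals $T\ci\bW$ composed with the orthogonal projection onto $\mathrm{span}\{\ran\Delta\ci R^\bV : R\in\cD(Q)\}$, hence $\|T\ci\bW^Q\|\ci{L^2(\bW)\to L^2(\bV)} \le \|T\ci\bW\|\ci{L^2(\bW)\to L^2(\bV)}$; this gives (i), (ii), and their duals. Condition (iii) is just the Cauchy--Schwarz inequality applied to the bilinear form of $T\ci\bW$.

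\emph{Sufficiency.} My plan is to essentially rerun the proof of Theorem~\ref{well-loc-rel} (outlined in Section~\ref{mainproof}), but carefully bifurcate each appearance of the old testing quantity $\|\1\ci Q T\ci\bW \1\ci Q e\|\ci{L^2(\bV)}$ into a martingale-difference piece, controlled by $T\ci\bW^Q$, and an average piece, controlled by the weak-type testing (iii). The key orthogonal decomposition in $L^2(\bV)$ is
\[
\1\ci Q T\ci\bW h \, = \, \E\ci Q^\bV T\ci\bW h \, + \sum_{R \in \cD(Q),\, R \ne Q} \Delta\ci R^\bV T\ci\bW h,
\]
from which Pythagoras yields $\|\1\ci Q T\ci\bW h\|^2 \le \|\E\ci Q^\bV T\ci\bW h\|^2 + \|T\ci\bW^Q h\|^2$. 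Because $\E\ci Q^\bV$ is the orthogonal projection in $L^2(\bV)$ onto $\{\1\ci Q v : v\in\F^d\}$, one has $\|\E\ci Q^\bV T\ci\bW \1\ci Q e\|\ci{L^2(\bV)} = \sup_{v\ne 0}|\La T\ci\bW \1\ci Q e, \1\ci Q v\Ra\ci{L^2(\bV)}|/\|\1\ci Q v\|\ci{L^2(\bV)}$, which (iii) bounds by $\fT_3 \|\1\ci Q e\|\ci{L^2(\bW)}$.

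Following the outline of Section~\ref{mainproof}, I would then decompose the bilinear form $\La T\ci\bW f, g\Ra\ci{L^2(\bV)}$ into a local part (pairs $Q,R$ at tree-distance at most $r$) plus a paraproduct and its dual (arising from the interaction of $\Delta\ci Q^\bW f$ with the averages of $g$ on ancestors of $Q$, and vice versa). The local part is handled exactly as in Theorem~\ref{well-loc-rel}: a telescoping square-function argument produces the factor $(r+1)^{1/2}$ multiplying $\fT_2 + \fT_2^*$, without using any testing on indicators. The paraproduct is reduced, via the Matrix Carleson Embedding Theorem~\ref{cet}, to a Carleson-type check on a sequence of matrix weights $\{\alpha\ci Q\}$ built from averages of $T\ci\bW(\1\ci Q e)$ and $T\ci\bV^*(\1\ci Q v)$. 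Using the identity above I would split that Carleson sum accordingly: the $T\ci\bW^Q$-contribution is absorbed into the embedding theorem and controlled by (i), producing the $C(d)^{1/2}(\fT_1 + \fT_1^*)$ factor, while the $\E\ci Q^\bV T\ci\bW$-contribution collapses to a diagonal pairing $\sum_Q |\La T\ci\bW \1\ci Q e\ci Q, \1\ci Q v\ci Q\Ra\ci{L^2(\bV)}|$ that is handled by (iii) together with an $\ell^2$ Cauchy--Schwarz, yielding the additive $\fT_3$ with no embedding constant attached.

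The main obstacle is this last bookkeeping step: one must arrange the splitting of the paraproduct so that the extracted diagonal pieces pair with legitimate test vectors $e\ci Q, v\ci Q$ whose $\ell^2(\bW)$- and $\ell^2(\bV)$-norms are controlled by $\|f\|\ci{L^2(\bW)}$ and $\|g\|\ci{L^2(\bV)}$, respectively; only then does (iii) deliver exactly one additive $\fT_3$ rather than a term that has to be bounded more crudely. This is precisely the piece that in Theorem~\ref{well-loc-rel} was absorbed into the $\fT_1$ testing, giving the extra $\tfrac{1}{2}(\fT_1 + \fT_1^*)$; by routing it instead through the weak-type condition (iii), the $\tfrac{1}{2}$ disappears and the sharper constant stated in the theorem is obtained.
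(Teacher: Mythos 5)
Your necessity argument is fine, and your overall sufficiency strategy (subtract a paraproduct and its dual, use the Matrix Carleson Embedding Theorem for the paraproduct, and get the $(r+1)^{1/2}(\fT_2+\fT_2^*)$ factor from the band part via rank classes mod $(r+1)$ and Cauchy--Schwarz) is the same as the paper's. The genuine gap is where you let the weak-type condition (iii) enter. You propose to split the Carleson sum for the paraproduct into a $T\ci\bW^{Q}$ piece and an $\E\ci Q^\bV T\ci\bW$ piece, the latter collapsing to a diagonal sum $\sum_Q \left| \La T\ci\bW \1\ci Q e\ci Q, \1\ci Q v\ci Q\Ra\ci{L^2(\bV)}\right|$ over \emph{all} cubes, and you yourself flag that you cannot control $\sum_Q \|\1\ci Q e\ci Q\|\ci{L^2(\bW)}^2$ and $\sum_Q \|\1\ci Q v\ci Q\|\ci{L^2(\bV)}^2$ by $\|f\|\ci{L^2(\bW)}^2$ and $\|g\|\ci{L^2(\bV)}^2$. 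That is not a bookkeeping issue but a structural obstruction: with $e\ci Q, v\ci Q$ built from weighted averages of $f$ and $g$, such square-function sums are exactly what the Carleson Embedding Theorem is for, so this route cannot produce an additive $\fT_3$ free of the embedding constant. Moreover, no $\E\ci Q^\bV$ piece actually arises in the paraproduct: the Carleson condition one must verify (see \eqref{test-03} and Lemma \ref{lem:parbdd}, via Lemma \ref{l:T-para}) involves only the martingale differences $\Delta\ci R^\bV T\ci\bW \1\ci{Q_0} e$ with $R\in\cD(Q_0)$, which are already dominated by $\| T\ci\bW^{Q_0}\1\ci{Q_0} e\|\ci{L^2(\bV)}$, i.e.\ by condition (i) exactly as stated; no bifurcation of an ``old'' testing quantity is needed there.

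The mechanism you are missing is that condition (iii) is used exactly once, and the localization hypothesis is what makes this possible: for $f,g\in\cL$ one may assume all cubes in their representations lie in one equivalence class, hence have a common ancestor $Q_0$, and then decompose $f=f_1+\E\ci{Q_0}^\bW f$, $g=g_1+\E\ci{Q_0}^\bV g$ with $f_1,g_1$ sums of weighted martingale differences inside $Q_0$. The paraproduct/band machinery, with (i) and (ii) already phrased in terms of the truncations $T\ci\bW^{Q}$, handles $\La T\ci\bW f_1, g_1\Ra\ci{L^2(\bV)}$; the two cross terms $\La T\ci\bW \E\ci{Q_0}^\bW f, g_1\Ra\ci{L^2(\bV)}$ and $\La T\ci\bW f_1, \E\ci{Q_0}^\bV g\Ra\ci{L^2(\bV)}$ are controlled directly by (i) and its dual, since $g_1$ (resp.\ $f_1$) lies in the range of $P\ci{Q_0}^\bV$ (resp.\ $P\ci{Q_0}^\bW$); and (iii) is applied solely to the single term $\La T\ci\bW \E\ci{Q_0}^\bW f, \E\ci{Q_0}^\bV g\Ra\ci{L^2(\bV)}$, which is what yields the lone additive $\fT_3$. (A minor slip in the same direction: with the paper's convention that $\cD(Q)$ contains $Q$, your displayed identity should not exclude $R=Q$ from the sum, otherwise the $\Delta\ci Q^\bV$ term is lost.) As written, your sufficiency argument is therefore incomplete at precisely the point where the theorem differs from Theorem \ref{well-loc-rel}.
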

There is no dual condition to \cond3 in this theorem because this condition is \emph{self-dual}. 
Note that Theorem \ref{well-loc-rel} follows immediately from Theorem \ref{t:well-loc-est-02}, because trivially for all $f\in\cL,$
\[
\| T\ci \bW^Q f \|\ci{L^2(\bV)} \le  \| \1\ci Q T\ci \bW f \|\ci{L^2(\bV)}. 
\]
Note also that condition \cond1 of Theorem \ref{well-loc-rel} implies condition \cond3 of Theorem \ref{t:well-loc-est-02}, with the trivial estimate for the corresponding bounds
\[
\fT_3 \le (\fT_1+ \fT_1^*)/2;
\]
here $\fT_3$ is the bound from Theorem \ref{t:well-loc-est-02}, and $\fT_1$, $\fT_1^*$ are the bounds from condition \cond1 and its dual in Theorem \ref{well-loc-rel}. 
\begin{rem}
\label{r:weak}
Condition \cond3 of Theorem \ref{t:well-loc-est-02} can be further relaxed. First, we do not need this condition to hold for all cubes $Q\in\cD$: it is sufficient if this condition holds for arbitrarily large cubes $Q$, meaning that for any $Q_0\in \cD$ one can find $Q\in\cD$, $Q_0\subset Q$ for which \cond3 holds. 

Secondly, if for any $Q_0\in\cD$ we have for the increasing sequence of cubes $Q_n$, where $Q_{n+1}$ is the parent of $Q_n$,  that $\bW(Q_n) \ge \alpha_n\I$, $\alpha_n\to +\infty$ and similarly for $\bV$, then condition \cond3 can be removed from Theorem \ref{t:well-loc-est-02}. 
\end{rem}

\subsection{Applications to estimates of Haar shifts}
While conditions \cond1 from Theorems \ref{well-loc-rel} and \ref{t:well-loc-est-02} are pretty standard testing conditions, and condition \cond3 from Theorem \ref{t:well-loc-est-02} is the standard weak boundedness condition, condition \cond2 seems unnecessarily complicated. However, if the $d\times d$ matrix measures $\bV$, $\bW$ satisfy the two weight matrix $A_2$ condition 
\begin{align}
\label{A_2}
\sup_{Q\in\cD} |Q|^{-2}\| \bV(Q)^{1/2} \bW(Q)^{1/2}\|^2 =: [\bV, \bW]\ci{A_2}<\infty, 
\end{align}
and the operator $T$ is a generalized big Haar shift with finitely many terms, see Definition \ref{d:BigHaarShift},  then condition \cond2 follows from a testing condition similar to \cond1 and the $A_2$ condition \eqref{A_2}.

Let us introduce some notation. For a generalized big Haar shift $T=\sum_{Q\in\cD} T\ci Q$ and cube $Q \in \cD$, define the operator $T^Q$ by
\begin{align}
\label{T^Q}
T^Q := \sum_{R\in\cD(Q)} T\ci R.
\end{align}
For a matrix measure $\bW,$ define the weighted version 
$(T^Q)\ci\bW$ of $T^Q$ by 
\begin{align*}
(T^Q)\ci\bW f = T^Q(\bW f).
\end{align*}
Note that $(T^Q)\ci\bW$ is different from $T^Q\ci\bW$ defined above in \eqref{T_W^Q}. 
%Note that if

\begin{lm}
\label{l:TestHaar}
Let $T$ be a generalized big Haar shift of complexity $r$ with finitely many terms, and let the $d\times d$ matrix measures $\bV$ and $\bW$ satisfy the matrix $A_2$ condition \eqref{A_2}. Assume that for all $Q\in\cD$
\begin{align}
\label{test-T^Q}
\| (T^Q)\ci\bW \1\ci Q e \|\ci{L^2(\bV)}   \le \fT \| \1\ci Q e \|\ci{L^2(\bW)},  \text{ for all } e\in \F^d.  
\end{align}

Then for all $Q\in\cD$
\begin{align}
\label{test-T_W^Q}
&\| T^Q\ci\bW \1\ci Q e \|\ci{L^2(\bV)}  \le \left( d^{1/2} r [\bV, \bW]\ci{A_2}^{1/2} +   \fT  \right) \|\1\ci Q e\|\ci{L^2(\bW)},  \text{ for all } e\in\F^d;\\
\label{test-T_W^Q-01}
&\| T^Q\ci\bW f_Q \|\ci{L^2(\bV)}  \le \left( d^{1/2}  (2r+1) [\bV, \bW]\ci{A_2}^{1/2} + \fT \right) \|f_Q \|\ci{L^2(\bW)},  \text{ for all }  f_Q \in D\ci Q^{\bW, r} .
\end{align}
Moreover, for all sufficiently large $Q\in\cD$
\begin{align}
\label{weak-02}
\| T\ci \bW \1\ci Q e \|\ci{L^2(\bV)} \le \fT \| \1\ci Q e \|\ci{L^2(\bW)}.
\end{align}
\end{lm}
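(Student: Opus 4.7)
The plan rests on three ingredients: (a) the kernel constancy of each block $T\ci R$ on $\Ch^{r+1}(R)\times\Ch^{r+1}(R)$, which makes high-index ancestor blocks trivial on inputs supported in $Q$; (b) the vanishing $\int_R d\bW\,\Delta\ci R^\bW f=0$ for $R\in\Ch^r Q$; and (c) a \emph{block $A_2$ estimate}: for every $R\in\cD$ and every $g$ supported in $R$,
\[
\|T\ci R(\bW g)\|\ci{L^2(\bV)}\;\le\; d^{1/2}[\bV,\bW]\ci{A_2}^{1/2}\,\|g\|\ci{L^2(\bW)}.
\]
One writes $T\ci R(\bW g)(x)=\sum_{\bar S'}c_{\bar R',\bar S'}\int_{\bar S'}d\bW g$ for $x\in\bar R'\in\Ch^{r+1}(R)$ with $|c_{\bar R',\bar S'}|\le|R|^{-1}$, applies Cauchy--Schwarz in $\bar S'$, and invokes the trace inequality
\[
\sum_{\bar R',\bar S'\in\Ch^{r+1}(R)}\|\bV(\bar R')^{1/2}\bW(\bar S')^{1/2}\|^2\;\le\;\sum\tr(\bV(\bar R')\bW(\bar S'))\;=\;\tr(\bV(R)\bW(R))\;\le\; d|R|^2[\bV,\bW]\ci{A_2}.
\]

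For \eqref{test-T_W^Q}, split $T\ci\bW\1\ci Q e = (T^Q)\ci\bW\1\ci Q e+\sum_{k\ge 1}T\ci{Q^{(k)}}(\bW\1\ci Q e)$. For $k\ge r+1$, $Q$ sits inside a single $\bar S'\in\Ch^{r+1}(Q^{(k)})$, so by kernel constancy $T\ci{Q^{(k)}}(\bW\1\ci Q e)$ is constant on $Q$ and contributes nothing to $\Delta\ci S^\bV$ for $S\subset Q$. Therefore
\[
T\ci\bW^Q\1\ci Q e=\sum_{S\subset Q}\Delta\ci S^\bV(T^Q)\ci\bW\1\ci Q e+\sum_{k=1}^r\sum_{S\subset Q}\Delta\ci S^\bV T\ci{Q^{(k)}}(\bW\1\ci Q e).
\]
The first piece is $\le\fT\|\1\ci Q e\|\ci{L^2(\bW)}$ via \eqref{test-T^Q} and the orthogonal projection bound $\|\sum_{S\subset Q}\Delta\ci S^\bV h\|\le\|h\|$; each of the $r$ ancestor pieces is $\le d^{1/2}[\bV,\bW]\ci{A_2}^{1/2}\|\1\ci Q e\|\ci{L^2(\bW)}$ by the block $A_2$ estimate applied with $g=\1\ci Q e$. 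Triangle inequality yields the claimed bound.

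For \eqref{test-T_W^Q-01}, Pythagoras for orthogonal $\Delta\ci S^\bV$ gives $\|T\ci\bW^Q f\ci Q\|\ci{L^2(\bV)}\le\|T\ci\bW f\ci Q\|\ci{L^2(\bV)}$. The vanishing $\int_Q d\bW f\ci Q=0$ (by summing the per-$R$ vanishings $\int_R d\bW\,\Delta\ci R^\bW f=0$) kills $T\ci{Q^{(k)}}(\bW f\ci Q)$ for $k\ge r+1$, leaving $T\ci\bW f\ci Q=(T^Q)\ci\bW f\ci Q+\sum_{k=1}^r T\ci{Q^{(k)}}(\bW f\ci Q)$; each of the $r$ ancestor blocks is $\le d^{1/2}[\bV,\bW]\ci{A_2}^{1/2}\|f\ci Q\|\ci{L^2(\bW)}$ by block $A_2$. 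For $(T^Q)\ci\bW f\ci Q=\sum_{R'\subset Q}T\ci{R'}(\bW f\ci Q)$, split by $i=\rk R'-\rk Q$: for each $i\in\{0,\ldots,r\}$ the cubes in $\Ch^i Q$ are pairwise disjoint, and block $A_2$ applied to each $T\ci{R'}(\bW(\1\ci{R'}f\ci Q))$ sums by Pythagoras to $d^{1/2}[\bV,\bW]\ci{A_2}^{1/2}\|f\ci Q\|\ci{L^2(\bW)}$; for $i\ge r+1$, group each $R'\subsetneq R$ (with $R\in\Ch^r Q$), use $f\ci Q|\ci R=\Delta\ci R^\bW f$ constant on each $S\in\Ch R$ with value $u\ci S:=\Delta\ci R^\bW f|\ci S$, and identify
\[
\sum_{R'\subsetneq R}T\ci{R'}(\bW\Delta\ci R^\bW f)=\sum_{S\in\Ch R}(T^S)\ci\bW(\1\ci S u\ci S),
\]
which by \eqref{test-T^Q} at scale $S$ is $\le\fT\|\Delta\ci R^\bW f\|\ci{L^2(\bW)}$; disjoint Pythagoras over $R$ then gives $\fT\|f\ci Q\|\ci{L^2(\bW)}$. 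Assembling the $(r+1)$ shallow-scale contributions, the deep-scale testing term, and the $r$ ancestor terms by triangle inequality produces the factor $(2r+1)d^{1/2}[\bV,\bW]\ci{A_2}^{1/2}+\fT$.

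Finally, \eqref{weak-02} follows because $T$ has only finitely many nonzero blocks, so for $Q$ large enough to contain all their support cubes, no strict ancestor $Q^{(k)}$ carries a block of $T$; hence $T\ci\bW\1\ci Q e=(T^Q)\ci\bW\1\ci Q e$ and \eqref{test-T^Q} closes the estimate. The main obstacle is the block $A_2$ estimate itself---in particular the trace inequality $\tr(\bV(R)\bW(R))\le d\|\bV(R)^{1/2}\bW(R)^{1/2}\|^2$, which is where the dimension $d$ appears and the matrix $A_2$ constant is activated---after which the rest is careful bookkeeping of interior scales, ancestor scales, and the regrouping $\sum_{R'\subsetneq R}T\ci{R'}=\sum_{S\in\Ch R}(T^S)\ci\bW$.
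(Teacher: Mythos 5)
Your argument is correct and is essentially the paper's own proof: your block $A_2$ bound via the trace inequality is Lemma \ref{l:norm<A_2}, peeling off the at most $r$ ancestor blocks is Lemma \ref{l:TQW-TWQ}, and your split of $(T^Q)\ci\bW$ into the layers $\Ch^k Q$, $0\le k\le r$, plus $\sum_{S\in\Ch^{r+1}Q}T^S$ (handled by the testing condition and the constancy of $f_Q$ on $\Ch^{r+1}Q$) is exactly the paper's decomposition $T^Q=T^{r+1}+\sum_{k=0}^r T_k$. The only cosmetic deviations are that you annihilate the $k\ge r+1$ ancestor blocks via constancy/mean-zero of the input rather than via the projection $P\ci Q^\bV$, and for \eqref{weak-02} the condition actually needed (and always achievable above any $Q_0$) is that $Q$ be contained in no block cube, which your ``$Q$ contains all support cubes'' slightly overstates.
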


%Lemma \ref{l:TestHaar} implies the following result. 

%\begin{lm}
%\label{l:est-comp}
%Let $T$ be a generalized big Haar shift  of complexity $r$ with finitely many terms, satisfying  \eqref{test-T^Q}. Then $T\ci \bW$ satisfies the testing condition \textup{\cond1---\cond3} of Theorem \ref{t:well-loc-est-02}, and the bounds in the test conditions  can be estimated as 
%%%
%\begin{align*}
% \fT_1  & \le d^{1/2} r [\bV, \bW]\ci{A_2}^{1/2}  +\fT ,\\
%\fT_2  &  \le d^{1/2} (2r+1) [\bV, \bW]\ci{A_2}^{1/2} +\fT ,\\
%\fT_3 & \le \fT .
%\end{align*}
%%%
%\end{lm}

Lemma \ref{l:TestHaar} implies that for a generalized big Haar shift $T$ of complexity $r$ with finitely many terms, the bounds in the testing conditions in Theorem \ref{t:well-loc-est-02} can be estimated as 
\begin{align*}
 \fT_1  & \le d^{1/2} r [\bV, \bW]\ci{A_2}^{1/2}  +\fT ,\\
\fT_2  &  \le d^{1/2} (2r+1) [\bV, \bW]\ci{A_2}^{1/2} +\fT ,\\
\fT_3 & \le \fT ,
\end{align*}
with the similar estimates for the dual bounds $\fT_{1}^*,\fT_{2}^*$. Note also  that  $\fT_3 \le\fT^*$, so $\fT_3\le(\fT+\fT^*)/2$.  Using these estimates and applying Theorem \ref{t:well-loc-est-02}, we get the following result. 

\begin{thm}\label{band-rel}
Let $T$ be a generalized big Haar shift  of complexity  $r$ with finitely many terms, and let the $d \times d$ matrix measures  $\bV$ and $\bW$ satisfy the $A_2$ condition \eqref{A_2}.   
Let 
\begin{enumerate}
\item $ \| (T^Q)\ci\bW \1\ci Q e \|\ci{L^2(\bV)}   \le \fT \| \1\ci Q e \|\ci{L^2(\bW)}$
for all $Q\in \cD$ and all vectors $e\in \F^d$,
\end{enumerate}
and let also the corresponding  condition for $T^*$ (with $\bV$ and $\bW$ interchanged) hold with constant $\fT^*$. 
Then,
\begin{align*}
%\label{norm-W-Haar}
\|T\ci\bW\|\ci{L^2(\bW) \rightarrow L^2(\bV) }\leq 
&\Bigl( C(d)^{1/2}  +(r+1)^{1/2} + 1/2 \Bigr) (\fT+\fT^*) \\
& +
2d^{1/2} \Bigl( C(d)^{1/2} r + (2r+1) (r+1)^{1/2}\Bigr) [\bV, \bW]\ci{A_2}^{1/2}\, ;
%C(d)^{1/2} \left((r+1)^{3/2} [\bV, \bW]\ci{A_2}^{1/2} + (\fT + \fT^*)(r+1)^{1/2} \right);  
\end{align*}
here again $C(d)$ is the constant from the Matrix Carleson Embedding Theorem (Theorem \ref{cet}).
\end{thm}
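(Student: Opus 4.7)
The proof is essentially a bookkeeping exercise that chains together the structural result from Lemma \ref{lem:well}, the conversion lemma \ref{l:TestHaar}, and the main characterization Theorem \ref{t:well-loc-est-02}. The plan is to reduce the hypothesis (testing of $(T^Q)\ci\bW$ against $\1\ci Q e$) to the three testing conditions of Theorem \ref{t:well-loc-est-02} (for $T^Q\ci\bW$ and for the weak-type form), and then add up the resulting constants.

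First, since $T$ is a generalized big Haar shift of complexity $r$ with finitely many terms, Lemma \ref{lem:well} guarantees that $T\ci\bW$ acts formally from $L^2(\bW)$ to $L^2(\bV)$ as a well localized operator of radius $r$. So Theorem \ref{t:well-loc-est-02} applies as soon as we can verify its three testing conditions (and their duals). Lemma \ref{l:TestHaar} does exactly this translation: the estimate \eqref{test-T_W^Q} produces condition \cond1 of Theorem \ref{t:well-loc-est-02} with
\[
\fT_1 \le d^{1/2} r [\bV,\bW]\ci{A_2}^{1/2} + \fT,
\]
the estimate \eqref{test-T_W^Q-01} produces condition \cond2 with
\[
\fT_2 \le d^{1/2}(2r+1)[\bV,\bW]\ci{A_2}^{1/2} + \fT,
\]
and the large-cube estimate \eqref{weak-02}, together with Cauchy--Schwarz and Remark \ref{r:weak} (which only requires the weak-type condition on arbitrarily large cubes), produces condition \cond3 with $\fT_3 \le \fT$. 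Applying the same three estimates to $T^*$ with $\bV,\bW$ interchanged gives the dual bounds $\fT_1^*, \fT_2^*$ controlled by $\fT^*$, and also $\fT_3 \le \fT^*$, so $\fT_3 \le (\fT+\fT^*)/2$.

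The final step is pure arithmetic. Theorem \ref{t:well-loc-est-02} gives
\[
\| T\ci\bW\|\ci{L^2(\bW)\to L^2(\bV)} \le C(d)^{1/2}(\fT_1+\fT_1^*) + (r+1)^{1/2}(\fT_2+\fT_2^*) + \fT_3.
\]
Plugging in the bounds above, the $\fT+\fT^*$ contributions collect into the coefficient $\bigl(C(d)^{1/2} + (r+1)^{1/2} + 1/2\bigr)$, while the $A_2$ contributions collect into $2d^{1/2}\bigl(C(d)^{1/2} r + (2r+1)(r+1)^{1/2}\bigr)[\bV,\bW]\ci{A_2}^{1/2}$, which gives exactly the stated bound.

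There is no real obstacle here, since the hard analytic work has already been absorbed into Lemma \ref{lem:well}, Lemma \ref{l:TestHaar}, and Theorem \ref{t:well-loc-est-02}. The only points requiring mild care are making sure that Lemma \ref{l:TestHaar} and its dual are both invoked correctly so that \emph{both} $\fT_1$ and $\fT_1^*$ (and similarly for $\fT_2$) appear with the same $A_2$-dependent additive term, and using Remark \ref{r:weak} to avoid demanding the weak boundedness condition on every cube (only on arbitrarily large ones, which \eqref{weak-02} provides). After that, the constants fall into place as stated.
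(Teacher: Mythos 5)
Your proposal is correct and follows essentially the same route as the paper: the paper also deduces Theorem \ref{band-rel} by feeding the bounds $\fT_1\le d^{1/2}r[\bV,\bW]\ci{A_2}^{1/2}+\fT$, $\fT_2\le d^{1/2}(2r+1)[\bV,\bW]\ci{A_2}^{1/2}+\fT$, $\fT_3\le(\fT+\fT^*)/2$ from Lemma \ref{l:TestHaar} (and its dual) into Theorem \ref{t:well-loc-est-02}, with the well localized structure supplied by Lemma \ref{lem:well} and the weak-type condition handled on sufficiently large cubes via Remark \ref{r:weak}. The arithmetic collecting the constants matches the stated bound.
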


\begin{rem} Under some additional assumptions on the filtration $\{\cF_n\}_{n\in\Z}$,  \cond1 of Theorem \ref{band-rel} (and its adjoint) is also necessary for the boundedness of the operator $T\ci\bW: L^2(\bW)\to L^2(\bV)$. For example, it is necessary if there exists $\kappa\in (0,1)$ such that for any $Q\in\cD$ and its parent $\hat Q$
\[
|Q|\le\kappa |\hat Q|.
\]
For the proof, see Lemma \ref{l:nec} below.  This condition holds for the standard dyadic lattice, and for any homogeneous lattice, but it is in fact much weaker than homogeneity of the lattice. 
%
%Let $T$ be a generalized big Haar shift  of complexity  $r$ with finitely many terms, and let the $d \times d$ matrix measures  $\bV$ and $\bW$ satisfy the $A_2$ condition \eqref{A_2}. Then, a testing condition similar to \cond1 of Theorem \ref{band-rel} is actually necessary for $T_{\bW}$ to be bounded. Specifically, let  $P\ci Q^\bV$  be the orthogonal projection in $L^2(\bV)$ onto the subspace of functions supported on $Q$ and orthogonal to $\{\1\ci Q e:e\in\F^d\}$. Then if $T_{\textbf{W}}: L^2(\bW) \rightarrow L^2(\bV)$ is bounded, the following condition (and its dual) holds: 
%\begin{enumerate}
%\item $ \| P\ci Q^\bV(T^Q)\ci\bW \1\ci Q e \|\ci{L^2(\bV)}   \le \fT \| \1\ci Q e \|\ci{L^2(\bW)}$
%for all $Q\in \cD$ and all vectors $e\in \F^d$.
%\end{enumerate}
%Furthermore, the best possible constants $\fT$, $\fT^*$ in this estimate satisfy 
%%%
%\begin{align}
%\label{fT<norm-02}
%\fT, \,\fT^* \le \|T\ci\bW\|\ci{L^2(\bW) \rightarrow L^2(\bV) } +2 d^{1/2} r [\bV, \bW]\ci{A_2}^{1/2}\,.
%\end{align}
%%%
%This estimate follows immediately from Lemma \ref{l:TQW-TWQ}.
\end{rem}

 Lemma \ref{l:TestHaar} will also be proved later in Section \ref{s:est-Haar}. 

\subsection{Remarks about norm dependence on the $A_2$ characteristic and complexity}
Let $T$ be a Haar shift  of complexity  $r$ with finitely many terms and let $W$ be an $A_2$ weight. Then, Theorem \ref{band-rel} could potentially be used to estimate the dependence of the norm of $T$ in the weighted space $L^2(W)$ on the $A_2$ characteristic $[W]\ci{A_2}$ of the weight $W$. Here, $[W]\ci{A_2}$ is the exactly the $A_2$ characteristic $[\bV, \bW]\ci{A_2}$ from  \eqref{A_2} with $\dd\bW =W\dd\sigma$, $\dd\bV=W^{-1}\dd\sigma$. 

In the case of scalar weights $w$, there is an estimate showing the norm of interest  \linebreak $\|T\|_{L^2(w) \rightarrow L^2(w)}$ depends linearly on $[w]\ci{A_2}$, and this estimate is optimal, see \cite{th12}. In the  matrix case, the best known estimate is $[W]\ci{A_2}^{3/2},$ which has been recently established independently by the third author and collaborators and the second author and collaborators. In any case, for Haar shifts with finitely many terms, Theorem \ref{band-rel} reduces the problem to finding the optimal estimate in the testing condition \cond1 and its dual. 

Theorem \ref{band-rel} can also be used to study the dependence of $\|T\|_{L^2(W) \rightarrow L^2(W)}$ on the complexity of $T$. For scalar $A_2$ weights $w$, the best known estimate for $\|T\|_{L^2(w) \rightarrow L^2(w)}$ grows linearly in the complexity $r$ of the Haar shift $T$, see \cite{t13}. It appears that in the scalar case, Theorem \ref{band-rel} gives the growth rate $r^{3/2}$ in terms of complexity, because testing constants similar to those in \cond1 and its dual are usually estimated by $C\cdot(r+1)[w]\ci{A_2}$. 

However, then the standard splitting trick would allow us to get linear in complexity growth. Namely, one can split the operator $T$ as $T=\sum_{k=0}^r T_k$ 
\[
T_k = \sum_{j\in\Z}\sum_{ \substack{ Q\in\cD \\ \rk Q = k + (r+1)j}} T\ci Q ; 
\]
then each $T_k$ is a  generalized big Haar shift of complexity zero, with respect to the rarefied  filtration given by $\sigma$-algebras  $\cF_{k+(r+1)n}$,  $ n\in\Z$. 

In the scalar case, with the dyadic filtration, estimates of testing bounds like $\fT$ and $\fT^*$ in terms of $[w]\ci{A_2}$ do not appear to change if we restrict to rarefied filtrations, see \cite{t13}.
As a result, we could potentially use these arguments to obtain an estimate of $\|T\|_{L^2(W) \rightarrow L^2(W)}$  that depends linearly on the complexity $r$.

\section{Weighted paraproducts and their estimates} \label{paraproduct}

The essential part of the proof of the main results is the estimate of the associated weighted paraproducts, which is presented in this section.  

\subsection{Weighted paraproducts}
Let $f= \1_Se$ be a characteristic function with $S \in \mathcal{D}$ and  $e\in \mathbb{F}^d$. Then, for each fixed $n \in \mathbb{Z}$, $f$ has the orthogonal decomposition
\begin{equation} \label{eqn:decomp}
f=\sum_{Q\in \cD, \rk Q\ge -n}\Delta\ci Q^{\bW} f +\sum_{Q\in \cD, \rk Q= -n} \E\ci Q^{\bW} f.
\end{equation}
To prove this equality, just observe that if $\rk Q \ge \rk S$, then $f \textbf{1}_Q= \E\ci Q^{\bW} f$ in $L^2(\bW)$. So, if $m \ge \rk S$, we can use the definition of $\Delta\ci Q^{\bW}$ to conclude
\[ f -   \sum_{\substack{ Q \in \mathcal{D} \\  m \ge \rk Q \ge -n}} \left(\Delta\ci Q^{\bW} f \right) -  \sum_{\substack{ Q \in \mathcal{D} \\ \rk Q = -n}} \left(\E^{\bW}_Q f \right)  =  f - \sum_{\substack{Q \in \mathcal{D} \\ \rk Q =m+1}} \left(\E^{\bW}_Q f\right)= 0\]
in $L^2(\bW).$ Letting $m \rightarrow \infty$ gives the desired result. By orthogonality, it follows that 
\[
\|f\|^2_{L^2(\bW)}=\sum_{Q\in \cD, \rk Q\ge -n} \left \|\Delta\ci Q^{\bW} f \right \|^2_{L^2(\bW)} +\sum_{Q\in \cD, \rk Q= -n} \left \| \E\ci Q^{\bW}f \right\|^2_{L^2(\bW)}.
\]
For an operator $T\ci\bW$ acting formally from $L^2(\bW) $ to $ L^2(\bV),$
define the  paraproduct $\Pi^\bW=\Pi^\bW\ci T$ of complexity $r$ as% on the space $L^2(\bW)$:
\begin{align}
\label{para-01}
\Pi^{\bW}f=\sum_{Q\in \cD}\sum_{R\in \Ch^r Q} \Delta\ci R^{\bV} \left(T\ci \bW \E\ci Q^{\bW} f\right) 
= \sum_{Q\in \cD}\sum_{R\in \Ch^r Q} \Delta\ci R^{\bV} \left(T\ci \bW \La f \Ra\ci Q^{\bW} \1\ci Q\right);
\end{align}
in the situations we consider, one can show that the bilinear form of the paraproduct $\Pi^\bW$ is well defined for $f,g\in\cL$. Similarly for the adjoint $T\ci \bV^*$ of $T^\bW,$ define the paraproduct $\Pi^{\bV} =\Pi^\bV\ci {T^*}$ by
\[
\Pi^{\bV} g=\sum_{Q\in \cD}\sum_{R\in \Ch^r Q} \Delta\ci R^{\bW} \left(T\ci {\bV}^* \E\ci Q^{\bV} f\right).
\]
In later studies of these paraproducts, we will require the following simple lemma.

\begin{lm}
\label{l:T-para}
Let $T=T\ci\bW$ be a well localized operator of radius $r$, acting formally from $L^2(\bW)$ to $L^2(\bV)$.
% and let $\Pi^\bW=\Pi^\bW\ci T$ be the corresponding paraproduct of 
% complexity $r$, defined above. 
Then for any cubes  $Q, S \in \mathcal{D}$ with $Q\subset S$ and for any $R\in\Ch^r Q$ and for all $e\in\F^d$, 
\begin{align*}
 \Delta\ci R^\bV T\ci\bW \1\ci Q e =\Delta\ci R^\bV T\ci\bW \1\ci S e. 
\end{align*}
\end{lm}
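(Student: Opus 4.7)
The plan is to reduce to the case $S = \hat Q$ (the parent of $Q$) and then iterate up the chain of ancestors $Q = Q^{(0)} \subset Q^{(1)} \subset \cdots \subset Q^{(k)} = S$. In the one-step case one has the pointwise identity
\[
\1\ci{\hat Q} e - \1\ci{Q} e \ = \ \sum_{\substack{Q' \in \Ch \hat Q \\ Q' \ne Q}} \1\ci{Q'} e,
\]
so the goal becomes to show $\Delta\ci R^{\bV} T\ci{\bW} \1\ci{Q'} e = 0$ for every sibling $Q' \in \Ch \hat Q$ with $Q' \ne Q$. Since $T\ci{\bW}$ is only defined via its bilinear form, I would carry this out by pairing against an arbitrary $g \in \cL$, moving $\Delta\ci R^{\bV}$ onto $g$, and invoking the weak continuity property \eqref{w-cont} to split the resulting form into a (possibly infinite) sum over $\Ch \hat Q$.

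The heart of the argument is then the vanishing of each sibling contribution, which I would extract directly from condition \textup{(i)} of the $r$-lower triangular property (Definition \ref{df:LTr}) applied to $T\ci{\bW}$. Both hypotheses of that condition are easy to verify: first, $R \subset Q$ and $Q \cap Q' = \varnothing$ force $R \not\subset Q'$; second, since $Q$ and $Q'$ are siblings, $\rk Q' = \rk Q$, so the rank inequality $\rk R = \rk Q + r \ge r + \rk Q'$ holds (with equality).

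Iterating by induction on $k = \rk Q - \rk S$ then finishes the proof. At the inductive step one passes from $Q^{(j)}$ to $Q^{(j+1)}$, and the new contributions again come from siblings $Q' \in \Ch Q^{(j+1)}$ with $Q' \ne Q^{(j)}$; these still satisfy $R \not\subset Q'$ (since $R \subset Q \subset Q^{(j)}$ and $Q^{(j)} \cap Q' = \varnothing$) and $\rk R = \rk Q + r \ge r + \rk Q'$ (since $\rk Q' = \rk Q^{(j)} \le \rk Q$), so the same appeal to condition \textup{(i)} applies verbatim. The main technical point I expect to be slightly delicate is that $\Ch Q^{(j+1)}$ may be infinite; this is exactly the situation the weak continuity hypothesis in the definition of \emph{acting formally} is designed to handle, together with the observation that $\Delta\ci R^{\bV} g$ is a finite linear combination of indicators for any $g \in \cL$, which keeps everything well defined throughout.
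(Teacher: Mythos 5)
Your proof is correct and follows essentially the same route as the paper: both arguments kill the contribution of $\1\ci{S\setminus Q}e$ by applying condition (i) of the $r$-lower triangular property (noting $R\subset Q$ forces $R\not\subset Q'$ and $\rk R=\rk Q+r\ge r+\rk Q'$) and invoke the weak continuity property \eqref{w-cont} to handle possibly infinitely many cubes. The only cosmetic difference is that the paper decomposes $S\setminus Q$ in one step into cubes of rank exactly $\rk Q$, whereas you climb the ancestor chain one generation at a time through siblings; this changes nothing essential.
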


\begin{rem}
\label{r:T-para}
The above Lemma \ref{l:T-para} means that in the formula \eqref{para-01} for paraproducts, one can replace  $\La f \Ra\ci Q^{\bW} \1\ci Q$ by $\La f \Ra\ci Q^{\bW} \1\ci S$ with an arbitrary cube $S\supset Q$. So formally we can write in the right hand side of \eqref{para-01} the expression $\La f \Ra\ci Q^{\bW} \1$ instead of $\La f \Ra\ci Q^{\bW} \1\ci Q$, which looks more in line with the definition of the paraproduct in the scalar case. 

To make it even more similar to the scalar representation, we could use $T\ci\bW (\1\otimes\I\ci{\F^d})$ instead of $T\ci\bW\1$ (to apply the operator $T\ci\bW$ to a matrix-valued function, one just needs to apply it to each column), and write the paraproduct $\Pi^\bW$ as 
\begin{align}
\label{para-02}
\Pi^\bW f = \sum_{Q\in\cD} \sum_{R\in\Ch^r Q} 
\left( T\ci\bW (\1\otimes\I\ci{\F^d}) \right)  \La f \Ra\ci Q^{\bW},
\end{align}
which is an alternate way of writing \eqref{para-01}. The expression $T\ci\bW (\1\otimes\I\ci{\F^d}) $ should be understood as $T\ci\bW (\1\ci S\otimes\I\ci{\F^d}),$ where $S$ is an arbitrary cube with $Q\subset S$.  
\end{rem}
%Notice that in the definition of the paraproducts, $E\ci Q^W$ and $E\ci Q^V$ can be replaced by expectations with respect to cubes of lower rank than $Q$, because if $R\in \Ch^r(Q)$ and $Q\subset Q'$, then
%\[
%\Delta\ci R^V \left(T\ci W E\ci Q^W f\right)=\Delta\ci R^V \left(T\ci W E\ci {Q'}^W f\right).
%\]

\begin{proof}[Proof of Lemma \ref{l:T-para}]
Take a cube $P\ne Q$, $\rk P = \rk Q$.  Since $T\ci\bW$ is $r$-lower triangular, 
\[
\Delta\ci R^\bV T\ci\bW \1\ci P e = 0, 
\]
for any cube $R\not\subset P$, $\rk R\ge \rk P + r$. In particular, that holds for $R\in\Ch^r Q$. 

Since for a cube $S\supset Q$ the set $S\setminus Q$ is a (countable) union of cubes $P$, $\rk P=\rk Q$, we conclude using the weak continuity property \eqref{w-cont} that for any $R\in\Ch^r Q$
\begin{align*}
\Delta\ci R^\bV T\ci\bW \1\ci{S\setminus Q} e = 0, 
\end{align*}
 which proves the lemma. 
\end{proof}

\begin{rem}
\label{r:para-01}
As one can see, in  the above proof we only used the fact that $T$ is $r$-lower triangular; more precisely,  only a part of the definition was used. 
\end{rem}

The following lemma states that the paraproducts $\Pi^{\bW}$ and $\Pi^{\bV}$  exhibit the same behavior as $T\ci \bW$ and $T^*\ci \bV$ respectively.
\begin{lm}
\label{lem:replacement}
Let $T\ci \bW$ be a well localized operator of radius $r$ acting formally from $L^2(\bW)$ to $L^2(\bV)$, and let $\Pi^{\bW} =\Pi\ci T^\bW$ be the paraproduct of complexity $r$ defined as above. %$Q, R \in \cD$ and let $\Pi^{\bW}$ be defined as above. 
Then for $Q, R \in \cD$
\begin{enumerate}
\item If $\rk R\leq r+\rk Q$, then %for all Haar functions $h\ci Q^{\bW}$ and $h\ci R^{\bV}$,
\[\Delta^\bV\ci R\Pi^{\bW}\Delta^\bW\ci Q=0.\]
%\[\La \Pi^{\bW} h\ci Q^{\bW},h\ci R^{\bV}\Ra_{L^2(\bV)}=0.\]
\item If $R\not\subset Q$, then %for all Haar functions $h\ci Q^{\bW}$ and $h\ci R^{\bV}$,
\[\Delta^\bV\ci R\Pi^{\bW}\Delta^\bW\ci Q=0.\]
%\[\La \Pi^{\bW} h\ci Q^{\bW},h\ci R^{\bV}\Ra_{L^2(\bV)}=0.\]
\item If $\rk R >r+\rk Q$, then% for all Haar functions $h\ci Q^{\bW}$ and $h\ci R^{\bV}$,
\[\Delta^\bV\ci R\Pi^{\bW}\Delta^\bW\ci Q=\Delta^\bV\ci R T\ci{\bW}\Delta^\bW\ci Q,\]
%\[\La \Pi^{\bW} h\ci Q^{\bW},h\ci R^{\bV}\Ra_{L^2(\bV)}=\La T\ci \bW h\ci Q^{\bW},h\ci R^{\bV}\Ra_{L^2(\bV)}\]
and in particular if $R\not \subset Q$, both sides of the equality are zero.
\end{enumerate}
\end{lm}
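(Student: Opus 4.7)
The plan is to expand the paraproduct and reduce each of the three assertions to a property of the composition $\E\ci{R^{(r)}}^\bW \Delta\ci Q^\bW$, combined — for the delicate case (iii) — with a careful use of Lemma \ref{l:T-para} and the $r$-lower triangular structure of $T\ci\bW$. I would first apply $\Delta\ci R^\bV$ to
\[
\Pi^\bW g = \sum_{P \in \cD}\sum_{S\in\Ch^r P} \Delta\ci S^\bV \bigl( T\ci\bW \E\ci P^\bW g \bigr),
\]
and use the orthogonality of $\{\Delta\ci S^\bV\}\ci{S\in\cD}$: only the term $S=R$ survives, and the corresponding $P$ is the unique order-$r$ ancestor of $R$, which I denote by $R^{(r)}$. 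Substituting $g = \Delta\ci Q^\bW f$ yields the master identity
\[
\Delta\ci R^\bV \Pi^\bW \Delta\ci Q^\bW f = \Delta\ci R^\bV T\ci\bW \E\ci{R^{(r)}}^\bW \Delta\ci Q^\bW f.
\]
Next I would analyze $\E\ci P^\bW \Delta\ci Q^\bW$: since $\Delta\ci Q^\bW f$ is supported on $Q$ with zero $\bW$-integral over $Q$, and since two atoms that intersect as sets are nested, one finds that $\E\ci P^\bW \Delta\ci Q^\bW = 0$ unless $P \subsetneq Q$ as sets, which in the filtration forces $P\subset Q$ as atoms with $\rk P > \rk Q$. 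In that nonzero case $P$ lies inside a unique child $R'\in\Ch Q$ and $\E\ci P^\bW \Delta\ci Q^\bW f = (\La f\Ra^\bW\ci{R'} - \La f\Ra^\bW\ci Q)\1\ci P$ in $L^2(\bW)$.

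Given this, parts (i) and (ii) are essentially immediate. For (i), $\rk R^{(r)} = \rk R - r \leq \rk Q$ prevents $R^{(r)}$ from being a strict descendant of $Q$, so $\E\ci{R^{(r)}}^\bW \Delta\ci Q^\bW = 0$ and the master identity gives zero. For (ii), since $R\subset R^{(r)}$ as atoms, the hypothesis $R\not\subset Q$ forces $R^{(r)}\not\subset Q$; the same analysis then rules out the strict-descendant case and again yields $\E\ci{R^{(r)}}^\bW \Delta\ci Q^\bW = 0$.

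The substance of the proof is in (iii), where $\rk R^{(r)} > \rk Q$. If moreover $R\not\subset Q$, both sides vanish: the LHS by the argument from (ii); the RHS after expanding $\Delta\ci Q^\bW f = \sum_{R''\in\Ch Q}\La f\Ra^\bW\ci{R''}\1\ci{R''} - \La f\Ra^\bW\ci Q\1\ci Q$ and invoking $r$-lower triangularity (disjointness of siblings gives $R\not\subset R''$ and $\rk R \geq r+\rk R'' = r+\rk Q+1$ handles condition (i) of Definition \ref{df:LTr}, with the $\1\ci Q$ term treated similarly). If instead $R\subset Q$, let $R'$ be the unique child of $Q$ containing $R^{(r)}$; then the master identity gives $\text{LHS} = \Delta\ci R^\bV T\ci\bW \1\ci{R^{(r)}} v$ with $v := \La f\Ra^\bW\ci{R'} - \La f\Ra^\bW\ci Q$, and two applications of Lemma \ref{l:T-para} (first with $Q'=R^{(r)}$, $S=R'$, then with $Q'=R^{(r)}$, $S=Q$) show
\[
\Delta\ci R^\bV T\ci\bW \1\ci{R^{(r)}} e = \Delta\ci R^\bV T\ci\bW \1\ci{R'} e = \Delta\ci R^\bV T\ci\bW \1\ci Q e.
\]
Expanding the RHS $\Delta\ci R^\bV T\ci\bW\Delta\ci Q^\bW f$ in children of $Q$ and killing the terms with $R''\neq R'$ by the same lower-triangular argument leaves $\Delta\ci R^\bV T\ci\bW \1\ci{R'}\La f\Ra^\bW\ci{R'} - \Delta\ci R^\bV T\ci\bW \1\ci Q \La f\Ra^\bW\ci Q$, which equals $\Delta\ci R^\bV T\ci\bW \1\ci{R'} v$ by the swap just noted, matching the LHS. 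The main obstacle is the bookkeeping in this last subcase: generically $R^{(r)}$ sits strictly below $R'$ (so $\rk R^{(r)} > \rk Q + 1$) and hence $R\notin\Ch^r R'$, so Lemma \ref{l:T-para} cannot be applied directly with $Q' = R'$; the clean way is to pivot through $R^{(r)}$, for which $R\in\Ch^r R^{(r)}$ holds by definition.
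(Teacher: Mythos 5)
Your proposal is correct and follows essentially the same route as the paper's proof: the same master identity $\Delta\ci R^\bV\Pi^\bW\Delta\ci Q^\bW=\Delta\ci R^\bV T\ci\bW\E\ci{R^{(r)}}^\bW\Delta\ci Q^\bW$ obtained from orthogonality, the same observation that $\E\ci{R^{(r)}}^\bW\Delta\ci Q^\bW\ne 0$ forces $R^{(r)}\subsetneq Q$ (giving (i) and (ii)), and in (iii) the same combination of $r$-lower triangularity to kill the sibling terms and Lemma \ref{l:T-para} to match the two sides. The only difference is cosmetic: you work with the explicit averages and pivot $\1\ci{R^{(r)}}\to\1\ci{R'}\to\1\ci Q$ via two applications of Lemma \ref{l:T-para}, whereas the paper phrases the same step with expectation operators and a single replacement of $\1\ci{Q'}$ by $\1\ci{\tilde Q}$.
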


\begin{proof}
Using summation indices $Q'$ and $R'$, we have
\[
\Pi^{\bW} \Delta\ci Q^{\bW}=\sum_{Q'\in \cD}\sum_{R'\in \Ch^r(Q')} \Delta\ci {R'}^{\bV} \left(T\ci \bW \E\ci {Q'}^{\bW}  \Delta\ci Q^{\bW}\right),
\]
% \[
% \Pi^{\bW} h\ci Q^{\bW}=\sum_{Q'\in \cD}\sum_{R'\in \Ch^r(Q')} \Delta\ci {R'}^{\bV} \left(T\ci \bW \E\ci {Q'}^{\bW}  h\ci Q^{\bW}\right),
% \]
and since $\Delta\ci R^{\bV}$ is orthogonal to $\Delta\ci{R'}^{\bV}$ for all choices of $R'$ except for $R$,  we have
\[\Delta^\bV\ci R\Pi^{\bW}\Delta^\bW\ci Q= \Delta\ci R^{\bV}  T\ci \bW \E\ci {Q'}^{\bW}  \Delta\ci Q^{\bW}\]
% \[
%  \La \Pi^{\bW}h\ci Q^{\bW}, h\ci R^{\bV}\Ra\ci {L^2(\bV)}=\La \Delta\ci R^{\bV} \left(T\ci \bW \E\ci {Q'}^{\bW}  h\ci Q^{\bW}\right),  h\ci R^{\bV} \Ra \ci {L^2(\bV)},
%  \]
where $Q' = R^{(r)}$ is the $r^{th}$ order ancestor of $R$. Notice that $\E\ci {Q'}^{\bW}  \Delta\ci Q^{\bW} \neq 0$ only if $Q'\subsetneq Q$. So, if $\rk R\leq r+\rk Q$ then $\rk R^{(r)} \le \rk Q,$ which implies $\E\ci {Q'}^{\bW}  \Delta\ci Q^{\bW} = 0$, and consequently 
\[\Delta^\bV\ci R\Pi^{\bW}\Delta^\bW\ci Q= 0,\]
%\[\La \Pi^{\bW}h\ci Q^{\bW}, h\ci R^{\bV}\Ra\ci {L^2(\bV)}=0,\]
proving the first statement.
Also, if $R\not\subset Q$, then $Q' = R^{(r)} \not \subset Q$. As above, this implies $\E\ci {Q'}^{\bW}  \Delta\ci Q^{\bW} = 0$, and consequently 
\[\Delta^\bV\ci R\Pi^{\bW}\Delta^\bW\ci Q= 0,\]
%\[\La\Pi^{\bW}h\ci Q^{\bW}, h\ci R^{\bV}\Ra\ci {L^2(\bV)}=0,\]
which proves the second statement.

To prove the third statement, assume  $\rk R >r+\rk Q$. If $R\not\subset Q$, we can use our previous result and the fact that $T_{\bW}$ is well localized to conclude:
\[\Delta^\bV\ci R\Pi^{\bW}\Delta^\bW\ci Q=0=\Delta^\bV\ci R T\ci{\bW}\Delta^\bW\ci Q.\]
%\[\La\Pi^{\bW}h\ci Q^{\bW}, h\ci R^{\bV}\Ra\ci {L^2(\bV)}=0 = \La T\ci \bW h\ci Q^{\bW},h\ci R^{\bV}\Ra\ci {L^2(\bV)}.\] 
It now suffices to consider the case $R\subset Q$. Recall that $Q' = R^{(r)}$. Since $Q \cap Q' \ne \emptyset$, we can look at ranks to conclude that 
 $Q' \subsetneq Q.$ Choose $\tilde{Q} \in \Ch Q$ with $Q' \subseteq \tilde{Q}$. Then, using the fact that $T_{\bW}$ is $r$-lower triangular, we have
 \[ \Delta\ci R^{\bV} T\ci \bW \Delta\ci Q^{\bW} =  \Delta\ci R^{\bV} T\ci \bW\left(  \sum_{S \in \Ch Q} \E\ci {S}^{\bW}  - \E\ci {Q}^{\bW}\right) =   \Delta\ci R^{\bV} T\ci \bW\left(   \E\ci {\tilde{Q}}^{\bW}  - \E\ci {Q}^{\bW} \cdot \textbf{1}_{\tilde{Q}} \right).\]
Using earlier arguments and $Q' \subsetneq Q$, we can write $ \Delta^\bV\ci R\Pi^{\bW}\Delta^\bW\ci Q $ as 
 \[ \Delta\ci R^{\bV}  T\ci \bW \E\ci {Q'}^{\bW}  \Delta\ci Q^{\bW}
=  \Delta\ci R^{\bV}  T\ci \bW \left(  \E\ci {\tilde{Q}}^{\bW} \cdot \textbf{1}_{Q'} - \E\ci {Q}^{\bW} \cdot \textbf{1}_{Q'} \right)  
=  \Delta\ci R^{\bV} T\ci \bW\left(   \E\ci {\tilde{Q}}^{\bW}  - \E\ci {Q}^{\bW} \cdot \textbf{1}_{\tilde{Q}} \right),
 \]
where the last equality follows by Lemma \ref{l:T-para}, completing the proof.
% Since $T_{\bW}$ is well localized, for all other $\tilde{Q}\subseteq Q$ with $\rk \tilde{Q} = \rk R -r = \rk Q'$, we have
% \[  \Delta\ci R^{\bV} T\ci \bW e\1\ci {\tilde{Q}}=0,\]
% 
%%\[ \La T\ci \bW \1\ci {\tilde{Q}}e,h\ci R^{\bV}\Ra\ci {L^2(\bV)}=0,\] 
%for all $e \in \mathbb{F}^d$. Thus, if $a$ is the  constant vector that $h\ci Q^{\bW}$ takes on $Q'$, then
%\[
%\Delta\ci R^{\bV}T\ci {\bW} \Delta\ci Q^{\bW}= \Delta\ci R^{\bV} T\ci \bW a\1\ci {\tilde{Q}}.
%\]
%% \[
%% \La T\ci {\bW} h\ci Q^{\bW},h\ci R^{\bV}\Ra\ci{L^2(\bV)}=\La T\ci \bW a\1\ci {Q'},h\ci R^{\bV}\Ra\ci {L^2(\bV)}.
%% \]
%On the other hand,
%\[
%\Delta\ci R^{\bV}T\ci {\bW} \Delta\ci Q^{\bW}=\Delta\ci R^{\bV} T\ci \bW \E\ci {Q'}^{\bW}  \Delta\ci Q^{\bW}.
%\]
%
%% \[
%% \La \Pi^{\bW} h\ci Q^{\bW}, h\ci R^{\bV}\Ra\ci {L^2(\bV)}=\La \Delta\ci R^{\bV} \left(T\ci \bW \E\ci {Q'}^{\bW}  h\ci Q^{\bW}\right),  h\ci R^{\bV} \Ra \ci {L^2(\bV)}= \La T\ci \bW \E\ci {Q'}^{\bW}  h\ci Q^{\bW},  h\ci R^{\bV} \Ra \ci {L^2(\bV)}.
%% \]
%Since $R\subset Q'\subset Q$, we can immediately conclude that  $\E\ci {Q'}^{\bW}  \Delta\ci Q^{\bW} = a\1\ci {Q'}$ and hence
%\[
%\Delta\ci R^{\bV}\Pi^{\bW} \Delta\ci Q^{\bW}= \Delta\ci R^{\bV} T\ci \bW a\1\ci {\tilde{Q}}=\Delta\ci R^{\bV}T\ci {\bW} \Delta\ci Q^{\bW},
%\]
%
%% \[
%% \La \Pi^{\bW}h\ci Q^{\bW}, h\ci R^{\bV}\Ra\ci {L^2(\bV)}=\La T\ci \bW a\1\ci {Q'},h\ci R^{\bV}\Ra\ci {L^2(\bV)}=\La T\ci {\bW} h\ci Q^{\bW},h\ci R^{\bV}\Ra\ci{L^2(\bV)},
%% \]
%which completes the proof. 
\end{proof}

\subsection{Estimates of the paraproducts}
The following theorem by the second and third authors from \cite{ct15} will be used to control the norms of the paraproducts:

\begin{thm}[The matrix weighted Carleson Embedding Theorem]\label{cet}
Let $\bW$ be a $d\times d$ matrix-valued measure and let $ \{A\ci Q\}_{Q \in \mathcal{D}}$ be a sequence of positive semidefinite $d\times d$ matrices indexed by $\mathcal{D}.$ Then the  following statements are equivalent:
\begin{enumerate}
\item $\displaystyle \sum_{Q\in \cD} \left\|  A^{1/2}\ci Q \int_Q \dd \bW f \right\|^2 \le A \|f\|\ci{L^2(\bW)}^2$
\item $\displaystyle \sum_{\substack{Q\in \cD(Q_0) %\\ Q\subset Q_0
}}   \bW ( Q ) A\ci Q\bW ( Q )\le B  \bW (Q_0 )$ for all $Q_0\in\cD$. 
\end{enumerate}
Moreover, for the best constants $A$ and $B$ we have $B\le A \le C(d) B$, where $C(d)$ is a constant depending only on the dimension $d$.
\end{thm}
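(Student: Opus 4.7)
The equivalence splits into the easy implication (1) $\Rightarrow$ (2), giving $B \le A$, and the substantive implication (2) $\Rightarrow$ (1), giving $A \le C(d) B$. For the easy direction, the plan is the standard testing argument: fix $Q_0 \in \cD$ and $e \in \F^d$, and apply (1) to the vector-valued function $f = \1\ci{Q_0} e$. For each $Q \in \cD(Q_0)$ one has $\int_Q \dd\bW f = \bW(Q) e$, while $\|f\|^2_{L^2(\bW)} = \La \bW(Q_0) e, e\Ra$. Dropping the non-negative contributions from cubes not in $\cD(Q_0)$ on the left-hand side of (1) yields
\begin{equation*}
\sum_{Q \in \cD(Q_0)} \La \bW(Q) A\ci Q \bW(Q) e, e \Ra \le A \La \bW(Q_0) e, e \Ra,
\end{equation*}
and since $e \in \F^d$ is arbitrary, this gives condition (2) with $B \le A$ in the L\"owner order.

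For the reverse direction (2) $\Rightarrow$ (1), I would rewrite the left-hand side of (1) via $\int_Q \dd\bW f = \bW(Q) \La f\Ra_Q^\bW$ as
\begin{equation*}
\sum_{Q \in \cD} \La \bW(Q) A\ci Q \bW(Q) \La f\Ra_Q^\bW, \La f\Ra_Q^\bW\Ra,
\end{equation*}
and run a stopping-time argument relative to a fixed $Q_0$; the general case follows by exhaustion. The stopping family $\mathcal S \subset \cD(Q_0)$ would be built by descending through the tree and marking a new stopping cube $Q$ whenever $\La f\Ra_Q^\bW$ has grown significantly relative to its nearest stopping ancestor. On cubes lying strictly between consecutive members of $\mathcal S$, the weighted average is comparable to that of its stopping ancestor, so the corresponding contribution is absorbed by a single application of the Carleson condition (2). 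The remaining principal terms associated with $\mathcal S$ would then be handled via a packing/energy estimate that controls $\sum_{Q \in \mathcal S} \La \bW(Q) \La f\Ra_Q^\bW, \La f\Ra_Q^\bW\Ra$ by $\|f\|^2_{L^2(\bW)}$.

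The main obstacle, and the source of the dimensional factor $C(d)$, is that in the matrix setting the vector $\La f\Ra_Q^\bW \in \F^d$ has no canonical scalar magnitude: the matrices $\bW(Q)$ at different scales may be ``oriented'' very differently, so a single stopping rule based on one scalar quantity will miss growth in some directions. My plan to resolve this is a directional linearization: fix an orthonormal basis $\{e_j\}_{j=1}^d$ of $\F^d$, run a scalar-style stopping rule for each of the quantities $\La \La f\Ra_Q^\bW, e_j\Ra$ (appropriately weighted by $\bW(Q)^{1/2}$), and sum the resulting $d$ scalar embedding estimates. An alternative would be a matrix Bellman function approach in the spirit of \cite{TPNV}, in which the basis is rechosen at each stopping cube to be adapted to $\bW(Q)$. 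Either route reduces the matrix embedding to $d$ scalar Carleson embeddings, producing the dimensional factor $C(d)$ in $A \le C(d) B$.
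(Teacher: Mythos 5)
A preliminary remark: this paper does not prove Theorem \ref{cet} at all --- it is imported as a black box from \cite{ct15} (the remark following the statement even records the constant $C(d)=e\cdot d^3(d+1)^2$ obtained there), so there is no internal proof to compare your argument with. Your implication (i)$\Rightarrow$(ii) is correct and standard: testing on $f=\1\ci{Q_0}e$ gives $\int_Q \dd\bW f=\bW(Q)e$ for $Q\in\cD(Q_0)$ and $\|f\|\ci{L^2(\bW)}^2=\La \bW(Q_0)e,e\Ra$, hence $B\le A$.

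The implication (ii)$\Rightarrow$(i), however, is where the entire difficulty of the theorem lives, and your sketch does not surmount it. The proposed ``directional linearization'' --- fixing an orthonormal basis $\{e_j\}$ and running $d$ scalar stopping-time arguments --- is precisely the reduction that is known to fail in the matrix setting. The directions that matter, $A\ci Q^{1/2}e_j$ (equivalently the eigendirections of $\bW(Q)$), vary with $Q$, so the coordinate quantities $\La\int_Q\dd\bW f,\,e_j\Ra=\int_Q\La Wf,e_j\Ra\,\dd\bw$ are not governed by a scalar Carleson condition extractable from the matrix inequality (ii); and even granting such a splitting, summing the $d$ scalar embeddings would require controlling $\int\|Wf\|^2\,\dd\bw$ by $\|f\|\ci{L^2(\bW)}^2=\int\La Wf,f\Ra\,\dd\bw$, which is false unless $W$ is uniformly bounded. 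More structurally, any stopping-time/packing scheme of the kind you describe rests on an $L^2(\bW)$ bound for the weighted martingale maximal function $f\mapsto\sup_Q\|\La f\Ra\ci Q^{\bW}\|$, and that bound fails for general matrix measures; this is the recognized obstruction that made the matrix Carleson embedding nontrivial and is exactly why \cite{ct15} proves it by a Bellman-function argument (the one-sentence ``alternative'' you mention, for which your proposal supplies no actual construction). As written, the hard direction and the bound $A\le C(d)B$ are not established.
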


\begin{rem*}
In \cite{ct15}, the authors obtained the value $C(d)=e\cdot d^3(d+1)^2$, where $e$ is the base of the natural logarithm. This might give the optimal asymptotic in terms of the dimension $d$, but it seems unlikely. 
\end{rem*}

%We aim to bound
%\[\|\Pi^Wf\|_{L^2(V)}^2=\left\|\sum_{Q\in \cD}\sum_{R\in \Ch^r(Q)} \Delta\ci R^V \left(T\ci W E\ci Q^W f\right)\right\|_{L^2(V)}^2.
%\]

%By orthogonality,
%\[\|\Pi^Wf\|_{L^2(V)}=\sum_{Q\in \cD}\sum_{R\in \Ch^r(Q)} \left\|\Delta\ci R^V \left(T\ci W E\ci Q^W f\right)\right\|_{L^2(V)}^2.%\]

%Let $\La f\Ra\ci{Q,W}$ be the weighted average of $f$, $\La f\Ra\ci{Q,W}:=\La W\Ra^{-1}\ci Q\La Wf\Ra\ci Q$. Then $E\ci Q^W=\La f\Ra\ci{Q,W}\1\ci Q$ and we can write

%\[\|\Pi^Wf\|_{L^2(V)}=\sum_{Q\in \cD}\sum_{R\in \Ch^r(Q)} \left\|\Delta\ci R^V \left(T\ci W \La f\Ra\ci{Q,W}\1\ci Q \right)\right\|_{L^2(V)}^2.\]
Now, we bound the paraproducts as follows:
\begin{lm}  
\label{lem:parbdd} Let $\Pi^{\bW}$ be the paraproduct defined earlier and assume that the well localized operator $T_{\bW}$ satisfies the testing condition
\begin{align}
\label{test-03} 
 \sum_{\substack{  R\in\cD(Q) \\ \rk R \ge\rk Q +r}} \left \| \Delta\ci R^\bV T\ci \bW\1\ci Q e \right\|\ci{L^2(\bV)}^2\le \fT_1^2 \|\1\ci Q e\|\ci{L^2(\bW)}^2 
\end{align}
for all $Q \in \mathcal{D}$ and $e \in \mathbb{F}^d$. Then $\Pi^{\bW}$ is bounded from $L^2(\bW)$ to $L^2(\bV)$ and 
\[  \left \| \Pi^{\bW} \right \|_{L^2(\bW) \rightarrow L^2(\bV)} \le C(d)^{1/2} \fT_1 , \] 
where $C(d)$ is the constant in Theorem \ref{cet}. 
%An analogous result holds for $\Pi^{\bV}.$
\end{lm}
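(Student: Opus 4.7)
The strategy is to recast $\|\Pi^\bW f\|^2\ci{L^2(\bV)}$ in the form appearing on the left-hand side of condition (i) in Theorem \ref{cet}, and then verify the corresponding packing condition (ii) using the hypothesis \eqref{test-03} together with Lemma \ref{l:T-para}, which allows one to replace the indicator $\1\ci Q$ by $\1\ci{Q_0}$ whenever $Q \subseteq Q_0$ and $R\in\Ch^r Q$.

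First, exploiting the mutual orthogonality of the projections $\Delta\ci R^\bV$ in $L^2(\bV)$, and the fact that in the definition of $\Pi^\bW$ each $R \in \Ch^r Q$ uniquely determines $Q = R^{(r)}$, one obtains
\[
\|\Pi^\bW f\|^2\ci{L^2(\bV)} = \sum_{Q \in \cD} \sum_{R \in \Ch^r Q} \bigl\| \Delta\ci R^\bV T\ci\bW \bigl( \1\ci Q \La f \Ra\ci Q^\bW \bigr) \bigr\|^2\ci{L^2(\bV)}.
\]
For each $Q$ define the positive semidefinite $d \times d$ matrix $A\ci Q$ by
\[
\La A\ci Q e, e\Ra\ci{\F^d} := \sum_{R \in \Ch^r Q} \| \Delta\ci R^\bV T\ci\bW \1\ci Q e \|^2\ci{L^2(\bV)}, \qquad e \in \F^d.
\]
When $e \in \ker \bW(Q)$, writing $\dd\bW = W \dd\bw$ with $\bw = \tr\bW$, one sees that $W(y)e = 0$ for $\bw$-a.e.\ $y \in Q$, so the integral defining $T\ci\bW \1\ci Q e$ vanishes; hence $A\ci Q$ annihilates $\ker \bW(Q)$. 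Using $\La f \Ra\ci Q^\bW = \bW(Q)^{-1} \int_Q \dd\bW f$ (with Moore--Penrose pseudoinverse) and setting $\wt A\ci Q := \bW(Q)^{-1} A\ci Q \bW(Q)^{-1}$, this annihilation yields $\bW(Q) \wt A\ci Q \bW(Q) = A\ci Q$, and one ends up with
\[
\|\Pi^\bW f\|^2\ci{L^2(\bV)} = \sum_{Q \in \cD} \Bigl\| \wt A\ci Q^{1/2} \int_Q \dd\bW\, f \Bigr\|^2,
\]
which is exactly the left-hand side of (i) in Theorem \ref{cet}.

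To verify the packing condition (ii) with $B = \fT_1^2$, the identity $\bW(Q) \wt A\ci Q \bW(Q) = A\ci Q$ reduces it to $\sum_{Q \in \cD(Q_0)} A\ci Q \le \fT_1^2 \bW(Q_0)$ as positive semidefinite operators. By Lemma \ref{l:T-para}, for any $Q \in \cD(Q_0)$ and $R \in \Ch^r Q$ one has $\Delta\ci R^\bV T\ci\bW \1\ci Q e = \Delta\ci R^\bV T\ci\bW \1\ci{Q_0} e$, and the assignment $(Q,R) \mapsto R$ is a bijection between $\{(Q,R) : Q \in \cD(Q_0),\ R \in \Ch^r Q\}$ and $\{R \in \cD(Q_0) : \rk R \ge \rk Q_0 + r\}$. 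Hence for every $e \in \F^d$,
\[
\sum_{Q \in \cD(Q_0)} \La A\ci Q e, e\Ra\ci{\F^d} = \sum_{\substack{R \in \cD(Q_0) \\ \rk R \ge \rk Q_0 + r}} \| \Delta\ci R^\bV T\ci\bW \1\ci{Q_0} e \|^2\ci{L^2(\bV)} \le \fT_1^2 \La \bW(Q_0) e, e \Ra\ci{\F^d},
\]
where the last inequality is simply \eqref{test-03} applied at $Q_0$. Theorem \ref{cet} then gives $\|\Pi^\bW f\|^2\ci{L^2(\bV)} \le C(d)\fT_1^2 \|f\|^2\ci{L^2(\bW)}$, as claimed. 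The main delicate point in the plan is the identification $\bW(Q) \wt A\ci Q \bW(Q) = A\ci Q$ when $\bW(Q)$ is singular; it hinges entirely on the pointwise fact that $W(y)e = 0$ $\bw$-a.e.\ on $Q$ whenever $e \in \ker \bW(Q)$, which forces $A\ci Q$ to vanish on $\ker \bW(Q)$.
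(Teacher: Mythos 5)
Your proposal is correct and follows essentially the same route as the paper: reduce $\|\Pi^{\bW}f\|^2\ci{L^2(\bV)}$ via orthogonality of the $\Delta\ci R^{\bV}$ to the form in Theorem \ref{cet}(i), and verify the packing condition (ii) using Lemma \ref{l:T-para} together with \eqref{test-03}. The only (cosmetic) difference is that the paper builds the Moore--Penrose pseudoinverse into the operators $B\ci Q e=\sum_{R\in\Ch^r Q}\Delta\ci R^{\bV}T\ci\bW(\bW(Q)^{-1}\1\ci Q e)$ and sets $A\ci Q=B\ci Q^*B\ci Q$ from the start, whereas you define the Gram matrix first and then conjugate by $\bW(Q)^{-1}$, justifying the singular case by the observation that $e\in\ker\bW(Q)$ forces $W(y)e=0$ $\bw$-a.e.\ on $Q$ — the same fact the paper uses.
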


\begin{rem}
\label{r:parbdd}
The testing condition \eqref{test-03} is clearly  weaker than the testing condition \cond1 from Theorem \ref{t:well-loc-est-02}; the constant $\fT_1$ from \eqref{test-03} is majorized by the corresponding constant from \cond1. 
\end{rem}

\begin{proof}[Proof of Lemma \ref{lem:parbdd}] Fix $f \in L^2(\bW)$ and in the dense set $\mathcal{L}.$ Then by orthogonality,
\begin{align*}
\| \Pi^{\bW} f \|^2_{L^2(\bV)}  = \sum_{Q \in \mathcal{D}} \sum_{R \in \Ch^r(Q)} \left \| \Delta^{\bV}_R  \left( T_{\bW} \E^{\bW}\ci Q f \right) \right \|^2_{L^2(\bV)}. 
\end{align*}
To control this, we use Theorem \ref{cet}. First, for each $Q \in \mathcal{D}$, define the linear map $B\ci Q: \mathbb{F}^d \rightarrow L^2(\bV)$ by 
\[ 
B\ci Q e = \sum_{ R \in \Ch^r(Q)} \Delta^{\bV}_R   T_{\bW} ( \bW(Q)^{-1}\1_Q e ), \quad \forall e \in \mathbb{F}^d,
\]
where $\bW(Q)^{-1}$ is the Moore-Penrose psuedoinverse of $\bW(Q).$ Then defining $A\ci Q := B^*_Q B_Q: \F^d \to \F^d$ we can write
\begin{align*}
\| \Pi^{\bW} f \|^2_{L^2(\bV)} =
\sum_{Q\in \cD} \left\|  A^{1/2}\ci Q \int_Q \dd \bW f \right\|^2 , 
\end{align*}
so we are in position to apply Theorem \ref{cet}. 

To prove condition (ii) in Theorem \ref{cet}, fix $Q_0\in\cD$, $e \in \mathbb{F}^d$ and use the definitions of $A_Q$, $B_Q$ to obtain
\begin{align*}
\sum_{Q\in\cD(Q_0)} \left\|  A^{1/2}\ci Q \bW(Q) e \right\|^2  
%& =
%\sum_{Q\in\cD(Q_0)} \left\|  A^{1/2}\ci Q \int_Q \dd \bW \1 e \right\|^2  &&\\
%&= 
%\sum_{Q\in\cD(Q_0)} \left\|  A^{1/2}\ci Q  \bW (Q) e \right\|^2   &&\\
&= 
\sum_{Q\in\cD(Q_0)} \left\|  B\ci Q  \bW (Q) e \right\|^2  && \\
&= 
\sum_{Q\in\cD(Q_0)} \sum_{R \in \Ch^r(Q)} \left \| \Delta^{\bV}_R  \left( T_{\bW} \1\ci Q e \right) \right \|^2_{L^2(\bV)}.
\end{align*}
Then  using Lemma \ref{l:T-para} and the testing condition \eqref{test-03} we get
\begin{align*}
 \sum_{Q\in\cD(Q_0)} \left\|  A^{1/2}\ci Q \bW(Q) e \right\|^2 &= 
\sum_{Q\in\cD(Q_0)} \sum_{R \in \Ch^r(Q)}  \left\| \Delta^{\bV}_R  \left( T_{\bW} \1\ci{Q_0} e \right) \right \|^2_{L^2(\bV)}  &&  \text{by Lemma \ref{l:T-para}  } \\
&\le 
\fT_1^2 \|\1\ci{Q_0} e\|\ci{L^2(\bW)}^2 && \text{by \eqref{test-03}},
\end{align*}
so condition \cond2 of Theorem \ref{cet} is verified. Thus
\begin{align*}
 \| \Pi^{\bW} f \|^2_{L^2(\bV)} \le C(d) \fT_1^2 \|f\|\ci{L^2(\bW)}^2 , 
\end{align*}
which completes the proof. 
\end{proof}

\section{Estimates of well localized operators} \label{mainproof}

In this section we will prove Theorem \ref{t:well-loc-est-02}. Theorem \ref{well-loc-rel} will follow automatically, since the bounds $\fT_{1}, \fT_{2}$ and their duals $\fT_{1}^*,\fT_{2}^*$ from Theorem \ref{t:well-loc-est-02} are trivially majorized by the corresponding bounds from Theorem \ref{well-loc-rel}, and the bound $\fT_3$ from  Theorem \ref{t:well-loc-est-02} is dominated by the minimum of $\fT_1$ and $\fT_1^*$ from Theorem \ref{well-loc-rel}.  We will also explain Remark \ref{r:weak}, claiming that the weak estimate \cond3 of Theorem \ref{t:well-loc-est-02} can be relaxed and sometimes ignored. 

To prove  Theorem \ref{t:well-loc-est-02}, we estimate the bilinear form of the operator $T\ci\bW$.  Let $f \in L^2(\bW)$ and $g \in L^2(\bV)$,  with $\|f\|\ci{L^2(\bW)} = \|g\|\ci{L^2(\bV)}=1,$ be from the dense set $\mathcal{L}$ of finite linear combinations of characteristic functions of atoms times vectors, i.e.
\begin{align}
\label{repr-fg}
f = \sum_{j=1}^N  \1\ci{Q_j} e_j\ \text{ and } \ g = \sum_{k=1}^M  \1\ci{R_k}v_k ,
\end{align}
where each  $Q_j, R_k \in \cD$ and  $e_j, v_k \in \F^d.$ By Lemma \ref{dense}, such functions are dense in $L^2(\bW)$ and $L^2(\bV)$ and so to obtain the result, we just need to show that
\begin{align}
\label{main-est-01} 
| \La T\ci{\bW} f, g \Ra\ci{L^2(\bV)}| \le C \| f\|\ci{L^2(\bW)} \| g \|\ci{L^2(\bV)}.
\end{align}

Let us first do some simplifications. Define an equivalence relation $\sim$ on $\cD$, by saying that $Q\sim R$ if $Q$ and $R$ have a common ancestor (i.e.~if $Q, R\subset S$ for some $S\in\cD$).  

Since $T_{\bW}$ is a localized operator, $\La T_{\bW} \1\ci Qe, \1\ci R \nu\Ra\ci{L^2(\bV)} = 0$ if $Q$ and $R$ are in different equivalence classes, for all $e, \nu \in \mathbb{F}^d$. Therefore, it is sufficient to prove \eqref{main-est-01} under the assumption that all $Q_j$, $R_k$ in the representation \eqref{repr-fg} are in the same equivalence class; then taking  the direct sum over equivalence classes, we get the general case. 

Let $Q_0\in\cD$ be a common ancestor of all $Q_j$, $R_k$ appearing in the representation \eqref{repr-fg}.  
Then, by \eqref{eqn:decomp}, we can write $f$, $g$ using the orthogonal decompositions:
\begin{align}
\label{dec-f}
f & = \sum_{Q\in \cD(Q_0)} \Delta\ci Q^{\bW} f + \E\ci{Q_0}^{\bW} f  =: f_1 + f_2 ; \\
\label{dec-g}
g & = \sum_{R\in \cD(Q_0)}\Delta\ci R^{\bV} g + \E\ci{Q_0}^{\bV}g  = :  g_1 + g_2   .
\end{align}
We will estimate the four terms $\La T\ci{\bW} f_j, g_k \Ra_{L^2(\bV)}$  for $1 \le j,k\le 2$ separately.

\subsection{Estimate of the main part}
To estimate $\La T\ci{\bW} f_1, g_1 \Ra\ci{L^2(\bV)}$ let us first notice that by Lemma \ref{lem:parbdd}, the testing condition \cond1 of Theorem \ref{t:well-loc-est-02} and its dual counterpart imply that the paraproducts $\Pi^\bW= \Pi^\bW\ci{T}$ and $\Pi^\bV =\Pi^\bV\ci{T^*}$ are bounded and that 
\begin{align*}
\| \Pi^\bW \|_{L^2(\bW) \rightarrow L^2(\bV)} + \|\Pi^\bV\|_{L^2(\bV) \rightarrow L^2(\bW)} \le C(d)^{1/2} (\fT_1+ \fT_1^*). 
\end{align*}
Thus, it is sufficient to estimate the operator $\wt T\ci\bW := T\ci \bW - \Pi^\bW - (\Pi^\bV)^* 
%:L^2(\bW)\to L^2(\bV)
$. Lemma \ref{lem:replacement} implies that 
\begin{align*}
\Delta\ci R^{\bV}  \wt T\ci{\bW} \Delta\ci Q^{\bW}=
\begin{cases} 
\Delta\ci R^{\bV}  T\ci{\bW} \Delta\ci Q^{\bW}    \,, \qquad &  | \rk Q-\rk R|\le r; \\
0 , & | \rk Q-\rk R| > r, 
\end{cases}
\end{align*}
so
\begin{align*}
\La \wt T\ci{\bW} f_1, g_1 \Ra\ci{L^2(\bV)} 
&= 
\sum_{ \substack{Q, R\in\cD(Q_0) \\ | \rk Q-\rk R|\le r} } \La T\ci{\bW} \Delta\ci Q^{\bW} f, \Delta\ci R^{\bV} g \Ra\ci{L^2(\bV)}  
\\
&= 
\sum_{\substack{Q, R\in\cD(Q_0)\\ \rk Q\le \rk R\le \rk Q +r}}   \La T\ci{\bW} \Delta\ci Q^{\bW} f, \Delta\ci R^{\bV} g \Ra\ci{L^2(\bV)} + \sum_{\substack{Q, R\in\cD(Q_0)\\ \rk R < \rk Q \le \rk R+r}}  \La T\ci{\bW} \Delta\ci Q^{\bW} f, \Delta\ci R^{\bV} g \Ra\ci{L^2(\bV)}. 
\end{align*}
Let us  estimate the first sum. The second one is treated similarly, by considering the dual operator $T\ci\bV^*$. To estimate the first sum, we  need to estimate the operator 
\begin{align*}
\wt T\ci\bW^{+} := \sum_{\substack{Q, R\in\cD(Q_0)\\ \rk Q\le \rk R\le \rk Q +r}} 
\Delta\ci R^{\bV}  T\ci{\bW} \Delta\ci Q^{\bW}  .  
\end{align*}
Since $T_{\bW}$ is $r$-lower triangular, we can see that $\Delta\ci R^{\bV}  T\ci{\bW} \Delta\ci Q^{\bW}=0$ if $\rk R\ge\rk Q$ and $R\not\subset Q^{(r)}$. So, we can rewrite $\wt T\ci\bW^{+}$ as 
\begin{align*}
\wt T\ci\bW^{+} & = \sum_{S\in\cD (Q_0^{(r)})}  \sum_{\substack{R\in\cD(S) \cap \cD(Q_0)\\ r+\rk S\le \rk R\le \rk S +2r}}  \sum_{Q\in\Ch^rS \ \cap \cD(Q_0)}
\Delta\ci R^{\bV}  T\ci{\bW} \Delta\ci Q^{\bW} =: \sum_{S\in \cD (Q_0^{(r)})} \wt T\ci\bW^{+, S},
\intertext{where}
\wt T\ci\bW^{+, S} & =   \sum_{\substack{R\in\cD(S)\cap \cD(Q_0)  \\ r+\rk S  \le \rk R\le \rk S +2r}} \sum_{Q\in \Ch^rS \ \cap \cD(Q_0)}
\Delta\ci R^{\bV}  T\ci{\bW} \Delta\ci Q^{\bW} .
\end{align*}
The testing condition \cond2 of Theorem \ref{t:well-loc-est-02} implies that 
\begin{align}
\label{est-T^S}
\| \wt T\ci\bW^{+, S}\|\ci{L^2(\bW)\to L^2(\bV)} \le \fT_2.
\end{align}
Note that if $S\cap S'=\varnothing$ or $|\rk S -\rk S'|>r$ then 
\begin{align*}
\ran \wt T\ci\bW^{+, S} \perp \ran\wt T\ci\bW^{+, S'}, \qquad \left(\ker \wt T\ci\bW^{+, S} \right)^\perp \perp \left(\ker \wt T\ci\bW^{+, S'} \right)^\perp .
\end{align*}
Therefore for fixed $k\in \Z$, the operator $\wt T\ci\bW^{+, k} $ defined by
\begin{align*}
\wt T\ci\bW^{+, k} := \sum_{j\in\Z} \sum_{\substack{S\in \cD(Q_0^{(r)}) \\ \rk S = k + (r+1)j}} \wt T\ci\bW^{+, S}
\end{align*}
is the direct sum of the corresponding operators  $\wt T\ci\bW^{+, S}$, and the estimate \eqref{est-T^S} implies 
\begin{align}
\label{est-T^k}
\| \wt T\ci\bW^{+, k} \|\ci{L^2(\bW)\to L^2(\bV)} \le \fT_2 . 
\end{align}
Since 
$
\wt T\ci \bW^{+} = \sum_{k=0}^r \wt T\ci\bW^{+, k}
$, 
we can easily conclude from \eqref{est-T^k} that 
\begin{align*}
\| \wt T\ci\bW^{+} \|\ci{L^2(\bW)\to L^2(\bV)} \le (r+1) \fT_2 . 
\end{align*}
However, by being more careful, we can obtain the following better dependence on $r$:
\begin{align}
\label{est-T^k-01}
\| \wt T\ci\bW^{+} \|\ci{L^2(\bW)\to L^2(\bV)} \le (r+1)^{1/2} \fT_2 .
\end{align}
To get this, observe that for for $0\le j <k \le r$ 
\begin{align*}
\left( \ker \wt T\ci\bW^{+,j} \right)^\perp  \perp  \left( \ker \wt T\ci\bW^{+, k} \right)^\perp.
\end{align*}
Then, decomposing $f_1=\sum_{k=0}^r f^k$, where
\begin{align*}
f^k:= \sum_{n\in\Z} \sum_{\substack{S\in\cD(Q_0^{(r)})\\ \rk S=k + (r+1)n}} \sum_{Q\in\Ch^r S} \Delta\ci Q^\bW f, 
\end{align*}
we get  that 
\begin{align*}
\| \wt T\ci\bW^{+} f_1\|\ci{L^2(\bV)} & = \left\|  \wt T\ci\bW^{+}\sum_{k=0}^r f^k \right\|_{L^2(\bV)} 
=  \left\|  \sum_{k=0}^r \wt T\ci\bW^{+,k} f^k \right\|_{L^2(\bV)} \\
& \le \fT_2\sum_{k=0}^r \| f^k\|\ci{L^2(\bW)} 
\le\fT_2 (r+1)^{1/2} \left( \sum_{k=0}^r \| f^k\|\ci{L^2(\bW)}^2 \right)^{1/2} ; 
\end{align*}
here the last inequality is by Cauchy--Schwarz. 

\subsection{Estimates of parts involving constant functions}
Estimates 
\begin{align*}
|\La T\ci \bW f_2 , g_1 \Ra\ci{L^2(\bV)} | \le \fT_1, \qquad 
|\La T\ci \bW f_1 , g_2 \Ra\ci{L^2(\bV)} | \le \fT_1^*
\end{align*}
follow immediately from the testing condition \cond1 and its dual. Estimate 
\begin{align*}
|\La T\ci \bW f_2 , g_2 \Ra\ci{L^2(\bV)} | \le \fT_3
\end{align*}
is a direct corollary of the assumption \cond3. 

Note that in decompositions \eqref{dec-f} and \eqref{dec-g}, we can replace $Q_0$ by any of its ancestors, so, as we said in 
Remark \ref{r:weak} it is sufficient that the estimate \cond3  holds  only for sufficiently large cubes $Q$ (meaning that for any $Q_0\in\cD$ we can find $Q\in\cD$, $Q_0\subset Q$ such that \cond3 holds for $Q$). 

Moreover, if for the increasing sequence of cubes $Q_n$, $n\ge 0$, where $Q_{n+1} $ is the parent of $Q_n$, we have that $\bW(Q_n)\ge \alpha_n \I$, $\alpha_n\nearrow \infty$ then writing decomposition \eqref{dec-f} with $Q_n$ instead of $Q_0$ and letting $n\to +\infty,$ we obtain
\begin{align*}
f = \sum_{Q\in \cD} \Delta\ci Q^{\bW} f =: f_1. 
\end{align*}
The analogous condition for $\bV$ implies the similar representation for $g$, so the theorem is reduced to estimating $\La T\ci\bW f_1, g_1\Ra\ci{L^2(\bV)}$, which was done using only testing conditions \cond1, \cond2 and their duals.

\section{Estimates of the Haar shifts}
\label{s:est-Haar}

In this section we will prove Lemmas \ref{l:TestHaar} and \ref{l:TQW-TWQ}. Theorem \ref{band-rel} is then a simple corollary of Theorem \ref{t:well-loc-est-02}. 
We will need the following lemma, which is well known to specialists; for the convenience of the reader we present its proof here. 
\begin{lm}
\label{l:norm<A_2}
Let $T$ be an integral operator with kernel $K$, $Tf(x) = \int K(x,y) f(y) d\sigma(y)$, where $K$ is supported on $Q\times Q$ ($Q\in\cD$) and $\|K\|_\infty\le |Q|^{-1}$. If the $d \times d$ matrix measures $\bV$, $\bW$ satisfy the matrix $A_2$ condition \eqref{A_2}, then the operator $T\ci\bW$, $T\ci \bW f := T(\bW f)$ satisfies
\begin{align*}
\| T\ci\bW \|\ci{L^2(\bW)\to L^2(\bV)} \le d^{1/2} [\bV, \bW]^{1/2}\ci{A_2}\,.
\end{align*}
\end{lm}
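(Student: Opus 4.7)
The plan is to symmetrize the operator via the half-power factorizations $\dd\bW = W^{1/2}\cdot W^{1/2}\dd\bw$ and $\dd\bV = V^{1/2}\cdot V^{1/2}\dd\bv$, and then to compress the resulting kernel-level double integral into $\tr(\bV(Q)\bW(Q))$ by a Hilbert--Schmidt upgrade, so that the $A_2$ condition can be applied directly. Concretely, I would rewrite
\[
V^{1/2}(x) T\ci\bW f(x) \= \int_Q K(x,y)\, V^{1/2}(x) W^{1/2}(y)\, F(y) \dd\bw(y),\qquad F(y):= W^{1/2}(y) f(y),
\]
which exhibits the operator as integration of $F$ against the matrix-valued kernel $M(x,y):= K(x,y) V^{1/2}(x) W^{1/2}(y)$, while preserving the norm $\|F\|\ci{L^2(\bw)} = \|f\|\ci{L^2(\bW)}$.

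Applying the vector-valued Cauchy--Schwarz pointwise in $x$, then integrating in $\bv$, would give
\[
\|T\ci\bW f\|^2\ci{L^2(\bV)} \le \|f\|^2\ci{L^2(\bW)} \int_{\cX}\int_{\cX} |K(x,y)|^2\, \|V^{1/2}(x) W^{1/2}(y)\|^2 \dd\bw(y)\dd\bv(x).
\]
The next step is to replace the operator norm by the Hilbert--Schmidt norm and use cyclicity of trace: $\|V^{1/2}(x) W^{1/2}(y)\|^2 \le \tr(V(x) W(y))$. This is the crucial move, since the integrand is now a sum of products of scalar entries, so after substituting $|K(x,y)|^2 \le |Q|^{-2}\1\ci Q(x)\1\ci Q(y)$ and using linearity, the double integral telescopes into $|Q|^{-2}\tr(\bV(Q)\bW(Q))$.

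To close, I would bound $\tr(\bV(Q)\bW(Q)) = \tr(\bW(Q)^{1/2}\bV(Q)\bW(Q)^{1/2}) \le d\,\|\bW(Q)^{1/2}\bV(Q)\bW(Q)^{1/2}\|$ and then apply the $C^*$-identity $\|A^*A\|=\|A\|^2$ with $A=\bV(Q)^{1/2}\bW(Q)^{1/2}$ to rewrite the right-hand side as $d\,\|\bV(Q)^{1/2}\bW(Q)^{1/2}\|^2$. The $A_2$ definition \eqref{A_2} then gives $|Q|^{-2}\tr(\bV(Q)\bW(Q)) \le d\,[\bV,\bW]\ci{A_2}$, and taking square roots yields the claimed bound. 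I do not anticipate any serious obstacle; the only nontrivial choice is precisely the passage from operator norm to Hilbert--Schmidt norm at the kernel level, because a cruder splitting of $\|V^{1/2}(x) W^{1/2}(y)\|\le \|V^{1/2}(x)\|\cdot\|W^{1/2}(y)\|$ would produce $\int_Q \|V\|\dd\bv \cdot \int_Q\|W\|\dd\bw$, which is not controlled by the joint $A_2$ characteristic.
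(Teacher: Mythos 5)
Your proposal is correct and follows essentially the same route as the paper: reduce to the double integral of $|K(x,y)|^2\,\|V^{1/2}(x)W^{1/2}(y)\|^2$, upgrade the operator norm to the Hilbert--Schmidt norm so the integral compresses to $\tr\bigl(\bV(Q)\bW(Q)\bigr)$, and then use $\tr \le d\,\|\cdot\|$ together with the $A_2$ condition. The only cosmetic difference is that you bound $\|T\ci\bW f\|\ci{L^2(\bV)}$ directly by Cauchy--Schwarz in $y$, whereas the paper estimates the bilinear form $\La T\ci\bW f, g\Ra\ci{L^2(\bV)}$; both yield the identical constant $d^{1/2}[\bV,\bW]\ci{A_2}^{1/2}$.
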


\begin{proof}
Take $f\in L^2(\bW)$, $g\in L^2(\bV)$,  with $\|f\|\ci{L^2(\bW)} = \|g\|\ci{L^2(\bV)} =1$. As we discussed above in Section \ref{s:mvm}, we can assume without loss of generality that the measures $\bV$ and $\bW$ are absolutely continuous with respect to the scalar trace measures $\bv$ and $\bw$ respectively, $\dd \bV = V\dd \bv$, $\dd \bW= W \dd\bw$. We then can write
\begin{align*}
\left| \La T\ci\bW f, g\Ra\ci{L^2(\bV)} \right| 
 \le  \iint_{Q\times Q} 
\left| \La V(x) K(x,y) W(y) f(y), g(x) \Ra\ci{\F^d} \right| \dd\bv(x) \dd\bw(y).
\end{align*}
The integral then can be estimated by 
\begin{align*}
  |Q|^{-1} 
&  \iint_{Q\times Q} \|V^{1/2}(x) W^{1/2}(y) \| \ \| V^{1/2}(x) g(x)\|\ci{\F^d} 
\| W^{1/2}(y) f(y) \|\ci{\F^d} \dd\bv(x) \dd\bw(y) \\
&\le \left(\iint_{Q\times Q} \| V^{1/2}(x) g(x)\|\ci{\F^d}^2 
\| W^{1/2}(y) f(y) \|\ci{\F^d}^2 \dd\bv(x) \dd\bw(y) \right)^{1/2}\times \\
& \qquad \qquad \qquad \qquad\qquad \times 
\left( |Q|^{-2}\iint_{Q\times Q} \|V^{1/2}(x) W^{1/2}(y) \|^2 \dd\bv(x) \dd\bw(y) \right)^{1/2} \\
& = 
\|f\|\ci{L^2(\bW)} \|g\|\ci{L^2(\bV)} 
\left( |Q|^{-2}\iint_{Q\times Q} \|V^{1/2}(x) W^{1/2}(y) \|^2 \dd\bv(x) \dd\bw(y) \right)^{1/2}\,.
\end{align*}
In the last integral, we can replace the operator norm by the Frobenius (Hilbert--Schmidt) norm $\|\fdot\|\ci{\fS_2}$ (recall that $\|A\|\ci{\fS_2}^2 = \tr(A^*A)$): 
\begin{align*}
\iint_{Q\times Q} \|V^{1/2}(x) W^{1/2}(y) \|^2 \dd\bv(x) \dd\bw(y) 
&\le 
\iint_{Q\times Q} \|V^{1/2}(x) W^{1/2}(y) \|\ci{\fS_2}^2 \dd\bv(x) \dd\bw(y) \\
& = \iint_{Q\times Q} \tr \Bigl(V(x) W(y) \Bigr) \dd\bv(x) \dd\bw(y) \\
&= \tr \Bigl(\bV(Q) \bW(Q) \Bigr) \\
& = \| \bV(Q)^{1/2} \bW(Q)^{1/2} \|\ci{\fS_2}^2 \\
& \le d \ \| \bV(Q)^{1/2} \bW(Q)^{1/2} \|^2 \\
& \le d  \ |Q|^2 [\bV, \bW]\ci{A_2}. 
\end{align*}
Combining this with the previous estimate, we get the conclusion of the lemma.  
\end{proof}

\subsection{Comparison of different truncations}
Let $T$ be a generalized big Haar shift of complexity $r$. As before, we will assume that the we only have finitely many terms $T\ci Q$ in the representation \eqref{BigHaarShift} and that each block $T\ci{ Q}$ is represented by an integral operator with a bounded kernel.

In the testing conditions in Theorems \ref{t:well-loc-est-02} and \ref{band-rel}, we used different truncations of the operator $T\ci\bW$, namely $T\ci\bW^Q$ and $(T^Q)\ci\bW$ respectively.  These operators  are generally different, but their difference can be estimated. 

Now to state the estimate, we will need some new notation. Let  $P\ci Q^\bV$  be the orthogonal projection in $L^2(\bV)$ onto the subspace of functions supported on $Q$ and orthogonal to $\{\1\ci Q e:e\in\F^d\}$. Then, by \eqref{eqn:decomp}, 
\begin{align*}
P\ci Q^\bV f = \sum_{R\in\cD(Q)} \Delta\ci R^\bV f = \1\ci Q f - \E\ci Q^\bV f, \qquad \forall f \in \mathcal{L},
\end{align*}
and we can extend this to all $f \in L^2(\bW).$
Then in this notation, the operator $T^Q\ci\bW$ defined above can be written as $T^Q\ci\bW=P\ci Q^\bV T\ci\bW.$
\begin{lm}
\label{l:TQW-TWQ}
For operators $T^Q\ci\bW$ and $(T^Q)\ci\bW$ introduced above and $f \in L^2(\bW)$ supported on $Q$,
\begin{align*}
\left\| \left(  T^Q\ci\bW - P\ci Q^\bV (T^Q)\ci\bW  \right) f  \right\|\ci{L^2(\bV)} \le d^{1/2} r  [\bV, \bW]\ci{A_2}^{1/2} \|f\|\ci{L^2(\bW)} \, .
\end{align*}
\end{lm}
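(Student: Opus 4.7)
The plan is to rewrite the difference in terms of the blocks $T_R$ of the generalized big Haar shift and then to localize the sum using the band structure of each block together with the $A_2$ condition. From the formulas in the excerpt, $P_Q^{\bV} = \sum_{R\in\cD(Q)} \Delta_R^{\bV}$, so
\[
T_{\bW}^Q = P_Q^{\bV} T_{\bW}\quad\text{on}\ \cL,
\]
and therefore, for $f$ supported on $Q$,
\[
\bigl(T_{\bW}^Q - P_Q^{\bV}(T^Q)_{\bW}\bigr) f = P_Q^{\bV}\bigl(T - T^Q\bigr)(\bW f) = P_Q^{\bV}\sum_{R\in\cD\setminus \cD(Q)} T_R(\bW f).
\]
Because the kernel $K_R$ of $T_R$ is supported on $R\times R$ and $f$ is supported on $Q$, only $R$ with $R\cap Q\neq \varnothing$ contribute; using the nesting of atoms in the filtration, this forces $R$ to be a strict (atom-)ancestor of $Q$, in particular $\rk R\leq \rk Q-1$.

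Next I would exploit the band structure: by Definition \ref{d:BigHaarShift}, $K_R$ is constant on products $R'\times S'$ with $R', S'\in \Ch^{r+1} R$, so $T_R(\bW f)$ is constant on every cube in $\Ch^{r+1} R$. Hence, if $Q$ happens to lie inside a single cube of $\Ch^{r+1} R$ (equivalently $\rk Q\geq \rk R + r + 1$), then $T_R(\bW f)$ is constant on $Q$ and $P_Q^{\bV} T_R(\bW f) = 0$. Combining with $\rk R\leq \rk Q-1$, the only ranks that can contribute are
\[
\rk R \in \{\rk Q - r,\, \rk Q - r + 1,\, \ldots,\, \rk Q - 1\}.
\]
Since each $\cD_n$ is a partition, at every such rank there is at most one ancestor $R$ of $Q$. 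So the sum has at most $r$ nonzero terms.

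For each remaining block $T_R$, its kernel satisfies $\|K_R\|_\infty \leq |R|^{-1}$ and is supported on $R\times R$, so Lemma \ref{l:norm<A_2} applied to $T_R$ and the pair $(\bV,\bW)$ gives
\[
\|T_R(\bW f)\|_{L^2(\bV)} \leq d^{1/2}[\bV,\bW]_{A_2}^{1/2}\,\|f\|_{L^2(\bW)}.
\]
Since $P_Q^{\bV}$ is an orthogonal projection in $L^2(\bV)$, the same bound holds for $\|P_Q^{\bV} T_R(\bW f)\|_{L^2(\bV)}$. The triangle inequality over the at most $r$ relevant ancestors $R$ then produces the estimate
\[
\bigl\|\bigl(T_{\bW}^Q - P_Q^{\bV}(T^Q)_{\bW}\bigr)f\bigr\|_{L^2(\bV)} \leq d^{1/2} r [\bV,\bW]_{A_2}^{1/2}\,\|f\|_{L^2(\bW)}.
\]
The main obstacle is keeping careful track of the atom/rank conventions of the filtration (in particular, that atoms at distinct ranks may coincide as sets, so "ancestor" must be interpreted in the atom sense rather than purely the set sense) when identifying precisely which $R\in\cD\setminus \cD(Q)$ produce a nonzero contribution; once this bookkeeping is in place, the rest is a routine application of Lemma \ref{l:norm<A_2}.
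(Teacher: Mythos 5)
Your proposal is correct and follows essentially the same route as the paper: identify $T^Q_{\bW}=P_Q^{\bV}T_{\bW}$, observe that for $f$ supported on $Q$ the difference reduces to the blocks $T_{Q^{(k)}}$ over strict ancestors, use the band structure (kernel constant on $\Ch^{r+1}$-children) to discard ancestors of order $k>r$, and bound each of the remaining at most $r$ terms by Lemma \ref{l:norm<A_2} together with the fact that $P_Q^{\bV}$ is a contraction. The only difference is notational (tracking ancestors by rank rather than writing $Q^{(k)}$), so there is nothing to add.
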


\begin{proof}
For $f \in \mathcal{L}$ and supported on $Q$ we have 
\begin{align*}
\left(  T^Q\ci\bW - P\ci Q^\bV (T^Q)\ci\bW  \right) f = \sum_{k=1}^r P\ci Q^\bV T\ci{Q^{(k)}}(\bW f),
\end{align*}
where $Q^{(k)}$ is the ancestor of $Q$ order $k$. Note that the terms $T\ci{Q^{(k)}}(\bW f)$ with $k>r$ are annihilated by $P\ci Q^\bV$.

Each operator $T\ci{Q^{(k)}}$ is an integral operator with kernel $K\ci{Q^{(k)}}$ supported on $ Q^{(k)} \times Q^{(k)} $ and satisfying $\|K\ci{Q^{(k)}}\|_\infty \le |Q^{(k)}|^{-1}$. Therefore applying Lemma \ref{l:norm<A_2} and using the fact that $P\ci Q^\bV$ is an orthogonal projection (and so a contraction) in $L^2(\bV)$ we get 
\begin{align*}
\| P\ci Q^\bV T\ci{Q^{(k)}}(\bW f)\|\ci{L^2(\bV)} \le d^{1/2} [\bV, \bW]^{1/2}\ci{A_2}\, \|f\|\ci{L^2(\bW)}\,. 
\end{align*}
Summation over $k$ completes the proof. 
\end{proof}

\begin{lm}
\label{l:nec} Assume there exists $\kappa\in (0,1)$ such that for any $Q\in\cD$ and its parent $\hat Q,$
\[
|Q|\le\kappa |\hat Q|. 
\]
Further, assume the weights $\bV$, $\bW$ satisfy the matrix $A_2$ condition  \eqref{A_2}. 
Then for any generalized big Haar shift $T$ of order $r$ with finitely many non-zero terms and for any $f\in L^2(\bW)$ supported on $Q\in\cD$
\[
\left\| \1\ci Q \left(T\ci\bW - (T^Q)\ci\bW \right) f \right\|\ci{L^2(\bV)} \le (1-\kappa)^{-1} d^{1/2} [\bV, \bW]\ci{A_2}^{1/2} \| f \|\ci{L^2(\bW)}. 
\]
 \end{lm}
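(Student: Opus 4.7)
The plan is to localize the difference $T\ci\bW - (T^Q)\ci\bW$ by decomposing over the blocks $T\ci S$ and identifying exactly which blocks survive once we multiply by $\1\ci Q$ and restrict $f$ to be supported on $Q$. Since $T = \sum_{S\in\cD} T\ci S$ has only finitely many nonzero terms and each $T\ci S$ is an integral operator with kernel supported on $S\times S$, the value of $T\ci S(\bW f)(x)$ for $f$ supported on $Q$ and $x\in Q$ can only be nonzero when $S\cap Q\neq\varnothing$. By the tree structure of atoms, either $S\subseteq Q$, $S\supsetneq Q$, or $S\cap Q=\varnothing$. The first case contributes precisely to $(T^Q)\ci\bW$, the third contributes zero, so
\[
\1\ci Q \bigl( T\ci\bW - (T^Q)\ci\bW\bigr) f = \sum_{k\ge 1} \1\ci Q T\ci{Q^{(k)}}(\bW f),
\]
where $Q^{(k)}$ is the order-$k$ ancestor of $Q$ (only finitely many terms are nonzero since $T$ has finitely many blocks, and under the hypothesis $|Q|\le \kappa|\hat Q|$ there is no ambiguity between distinct generations since the atoms $Q^{(k)}$ are strictly larger).

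Next, I would estimate each term $\|\1\ci Q T\ci{Q^{(k)}}(\bW f)\|\ci{L^2(\bV)}$ by imitating the proof of Lemma \ref{l:norm<A_2}. Pairing with an arbitrary $g\in L^2(\bV)$ of unit norm supported on $Q$, writing $\dd\bW=W\dd\bw$ and $\dd\bV=V\dd\bv$, and using the kernel bound $\|K\ci{Q^{(k)}}\|_\infty\le |Q^{(k)}|^{-1}$ together with the fact that both $x$ and $y$ range over $Q$ (not all of $Q^{(k)}$), Cauchy--Schwarz gives
\[
\bigl|\La \1\ci Q T\ci{Q^{(k)}}(\bW f), g\Ra\ci{L^2(\bV)}\bigr|
\le
|Q^{(k)}|^{-1}\|f\|\ci{L^2(\bW)}\|g\|\ci{L^2(\bV)}
\Bigl(\iint_{Q\times Q}\|V^{1/2}(x)W^{1/2}(y)\|^2 \dd\bv\dd\bw\Bigr)^{1/2}.
\]
Replacing the operator norm by the Hilbert--Schmidt norm and computing as in the proof of Lemma \ref{l:norm<A_2}, the double integral is bounded by $d\,\|\bV(Q)^{1/2}\bW(Q)^{1/2}\|^2\le d\,|Q|^2[\bV,\bW]\ci{A_2}$, so the whole expression is bounded by
\[
\frac{|Q|}{|Q^{(k)}|}\,d^{1/2}[\bV,\bW]\ci{A_2}^{1/2}\|f\|\ci{L^2(\bW)}\|g\|\ci{L^2(\bV)}.
\]

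Finally, I would use the geometric-decay hypothesis. Since $|Q|\le\kappa|\hat Q|$ iterated gives $|Q|\le\kappa^k|Q^{(k)}|$, we have $|Q|/|Q^{(k)}|\le\kappa^k$, and summing over $k\ge 1$ produces $\sum_{k=1}^\infty\kappa^k=\kappa/(1-\kappa)\le(1-\kappa)^{-1}$. Taking the supremum over unit-norm $g\in L^2(\bV)$ then yields
\[
\bigl\|\1\ci Q\bigl(T\ci\bW-(T^Q)\ci\bW\bigr)f\bigr\|\ci{L^2(\bV)}\le (1-\kappa)^{-1}d^{1/2}[\bV,\bW]\ci{A_2}^{1/2}\|f\|\ci{L^2(\bW)},
\]
as claimed. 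The only subtle point is step one—correctly identifying that the only surviving contributions in $\1\ci Q(T\ci\bW-(T^Q)\ci\bW)f$ come from strict ancestors of $Q$—but this is immediate from the tree structure once one notes that the case $S\supsetneq Q$ is precisely the complement of $S\in\cD(Q)$ among those $S$ with $S\cap Q\neq\varnothing$. The rest is a routine refinement of the argument already used in Lemma \ref{l:norm<A_2}, with the extra geometric factor $|Q|/|Q^{(k)}|$ making the sum convergent.
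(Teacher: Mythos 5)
Your proposal is correct and takes essentially the same route as the paper: both identify $\1\ci Q\bigl(T\ci\bW-(T^Q)\ci\bW\bigr)f$ (for $f$ supported on $Q$) with the sum of ancestor blocks $\sum_{k\ge1}T\ci{Q^{(k)}}(\bW f)$ and then combine the kernel normalization $\|K\ci{Q^{(k)}}\|_\infty\le|Q^{(k)}|^{-1}$, the $A_2$ estimate of Lemma \ref{l:norm<A_2}, and the iterated bound $|Q|\le\kappa^k|Q^{(k)}|$ to sum a geometric series giving $(1-\kappa)^{-1}$. The only cosmetic difference is that the paper sums the kernels into a single kernel on $Q\times Q$ bounded by $(1-\kappa)^{-1}|Q|^{-1}$ and applies the lemma once, whereas you estimate each term separately and sum the resulting norms.
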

 
\begin{cor}
\label{c:nec}
Under the assumptions of Lemma \ref{l:nec}, the condition \cond1 from Theorem  \ref{band-rel} is necessary for the boundedness of the operator $T\ci\bW: L^2(\bW)\to L^2(\bV)$, and the constants  $\fT$, $\fT^*$ from Theorem \ref{band-rel} satisfy
\[
\fT, \fT^* \le \|T\|\ci{L^2(\bW)\to L^2(\bV)} + (1-\kappa)^{-1} d^{1/2} [\bV, \bW]\ci{A_2}^{1/2}. 
\]

\end{cor}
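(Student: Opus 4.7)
My plan is to identify which building blocks $T\ci S$ of the big Haar shift actually contribute to $\1\ci Q(T\ci\bW - (T^Q)\ci\bW)f$ when $f$ is supported on $Q$, and then estimate each contribution by a version of Lemma \ref{l:norm<A_2} localized to $Q \times Q$. The volumetric hypothesis then furnishes a convergent geometric series.

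By definition, $T - T^Q = \sum_{S \in \cD,\, S \notin \cD(Q)} T\ci S$. Since $f$ is supported on $Q$ and the output is multiplied by $\1\ci Q$, only blocks whose cube $S$ meets $Q$ can contribute. The tree structure of atoms, combined with the strict inequality $|Q| < |\hat Q|$ forced by $\kappa < 1$ (which rules out distinct atoms coinciding as sets of positive measure), means that the surviving $S$ are exactly the strict ancestors $Q^{(k)}$, $k \ge 1$. Thus
\[
\1\ci Q(T\ci\bW - (T^Q)\ci\bW)f = \sum_{k \ge 1} \1\ci Q T\ci{Q^{(k)}}(\bW f),
\]
a finite sum, by the assumption that $T$ has only finitely many nonzero blocks.

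For each $k \ge 1$ I would rerun the proof of Lemma \ref{l:norm<A_2}, but exploit the fact that both the input $f$ and any test function $g$ supported on $Q$ force the integration domain down from $Q^{(k)} \times Q^{(k)}$ to $Q \times Q$. Since $\|K\ci{Q^{(k)}}\|_\infty \le |Q^{(k)}|^{-1}$, Cauchy--Schwarz bounds the bilinear form by
\[
|Q^{(k)}|^{-1} \|f\|\ci{L^2(\bW)} \|g\|\ci{L^2(\bV)} \Bigl(\iint_{Q \times Q} \|V^{1/2}(x) W^{1/2}(y)\|^2 \,\dd\bv(x)\dd\bw(y)\Bigr)^{1/2},
\]
where the inner integral is at most $d\,|Q|^2 [\bV, \bW]\ci{A_2}$ by the trace/Frobenius norm argument in Lemma \ref{l:norm<A_2}. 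This yields
\[
\|\1\ci Q T\ci{Q^{(k)}}(\bW f)\|\ci{L^2(\bV)} \le d^{1/2}[\bV,\bW]\ci{A_2}^{1/2}\,\frac{|Q|}{|Q^{(k)}|}\,\|f\|\ci{L^2(\bW)}.
\]

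Finally, iterating $|Q| \le \kappa|\hat Q|$ gives $|Q|/|Q^{(k)}| \le \kappa^k$, so the triangle inequality produces
\[
\|\1\ci Q (T\ci\bW - (T^Q)\ci\bW) f\|\ci{L^2(\bV)} \le d^{1/2}[\bV,\bW]\ci{A_2}^{1/2}\|f\|\ci{L^2(\bW)} \sum_{k=1}^\infty \kappa^k \le \frac{d^{1/2}[\bV,\bW]\ci{A_2}^{1/2}}{1-\kappa}\|f\|\ci{L^2(\bW)}.
\]
The main technical point is the refined $A_2$ estimate in the third paragraph: ancestor kernels appear large in $L^\infty$ only because they are spread over a large cube, and restricting integration to $Q \times Q$ recovers the factor $|Q|/|Q^{(k)}|$ that drives the geometric series. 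The rest is bookkeeping ensured by the strict volumetric decay.
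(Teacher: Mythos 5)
Your estimate is correct, but it proves Lemma \ref{l:nec}, not Corollary \ref{c:nec}. The identification of the surviving blocks as the ancestors $Q^{(k)}$, $k\ge 1$, and the term-by-term bound $d^{1/2}[\bV,\bW]\ci{A_2}^{1/2}\,|Q|/|Q^{(k)}|\le d^{1/2}[\bV,\bW]\ci{A_2}^{1/2}\kappa^k$ is essentially the paper's own argument for that lemma (the paper sums the kernels, bounds $\sum_{k\ge1}|Q^{(k)}|^{-1}\le(1-\kappa)^{-1}|Q|^{-1}$ on $Q\times Q$, and applies Lemma \ref{l:norm<A_2} once; your per-term version is the same computation and even gives the slightly better factor $\kappa/(1-\kappa)$). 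Also, your aside that $\kappa<1$ is needed to rule out coinciding atoms is superfluous: the identification of the contributing blocks as ancestors uses only the nesting of atoms. The real problem is that the corollary's conclusion --- necessity of condition (i) of Theorem \ref{band-rel} with $\fT,\fT^*\le\|T\ci\bW\|\ci{L^2(\bW)\to L^2(\bV)}+(1-\kappa)^{-1}d^{1/2}[\bV,\bW]\ci{A_2}^{1/2}$ --- is never deduced: nowhere do you estimate $\|(T^Q)\ci\bW\1\ci Qe\|\ci{L^2(\bV)}$, invoke the boundedness of $T\ci\bW$, or address $\fT^*$.

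The missing deduction, which is precisely the content of the corollary, runs as follows. Since every block $T\ci R$ with $R\in\cD(Q)$ has kernel supported on $R\times R\subset Q\times Q$, the function $(T^Q)\ci\bW\1\ci Qe$ is supported on $Q$, so $\|(T^Q)\ci\bW\1\ci Qe\|\ci{L^2(\bV)}=\|\1\ci Q(T^Q)\ci\bW\1\ci Qe\|\ci{L^2(\bV)}\le\|\1\ci QT\ci\bW\1\ci Qe\|\ci{L^2(\bV)}+\|\1\ci Q\bigl(T\ci\bW-(T^Q)\ci\bW\bigr)\1\ci Qe\|\ci{L^2(\bV)}$; the first term is at most $\|T\ci\bW\|\,\|\1\ci Qe\|\ci{L^2(\bW)}$ by the assumed boundedness, and the second is controlled by Lemma \ref{l:nec} applied to $f=\1\ci Qe$, which yields the stated bound for $\fT$. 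For $\fT^*$ one must add a duality remark: $T^*$ is again a generalized big Haar shift of complexity $r$ with finitely many terms (its blocks have kernels with the same support and sup-norm bound), $(T^*)^Q=(T^Q)^*$, the formal adjoint of $T\ci\bW$ is $T^*\ci\bV$ with the same norm, and $[\bW,\bV]\ci{A_2}=[\bV,\bW]\ci{A_2}$, so the identical argument with $\bV$ and $\bW$ interchanged gives the same bound for $\fT^*$. These steps are short, but without them the proposal establishes only the supporting estimate and stops short of the statement being proved.
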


\begin{proof}[Proof of Lemma \ref{l:nec}] Fix $Q \in \mathcal{D}$ and let $Q^{(k)} $ be the ancestor of order $k$ of $Q$. Then for $f$ supported on $Q$
\[
\left(T\ci\bW - (T^Q)\ci\bW \right) f =\sum_{k\ge 1} T\ci{Q^{(k)}} (\bW f).
\]
This sum can be written as the integral operator $\int_Q K(x,y) \bW(y) f(y) \dd y$, where 
\[
K(x,y) = \sum_{k\ge 1} K\ci{Q^{(k)}} (x,y),  
\]
and $K\ci{Q^{(k)}} $ is the kernel of the integral operator $T\ci{Q^{(k)}}$. We can now apply Lemma \ref{l:norm<A_2} to this sum of integral operators to obtain the desired bound.
\end{proof}
\subsection{Proof of Lemma \ref{l:TestHaar}}
Let the testing condition \eqref{test-T^Q} hold. 
Applying Lemma \ref{l:TQW-TWQ} with $f=\1\ci Q e$ and noticing that 
\[
\| P\ci Q^\bV (T^Q)\ci\bW f \|\ci{L^2(\bV)} \le \|  (T^Q)\ci\bW f \|\ci{L^2(\bV)} \le\fT \|f\|\ci{L^2(\bW)},
\]
we immediately get %that the estimate \eqref{test-T^Q} implies 
\eqref{test-T_W^Q}. 

To get \eqref{test-T_W^Q-01}, we need a bit more work. We can write 
\[
T^Q = T^{r+1} + \sum_{k=0}^r T_k, 
\]
where 
\begin{align*}
T^{r+1} = \sum_{R\in \Ch^{r+1} Q} T^R , \qquad T_k = \sum_{R\in\Ch^k Q} T\ci R, 
\end{align*}
with the obvious agreement that $\Ch^0 Q=\{Q\}$.  Following the agreed notation, for a scalar  integral operator $T$ we denote by $T\ci \bW$ the operator defined by 
\[
T\ci \bW f := T(\bW f), 
\] 
whenever this expression is defined. 

The operators $T\ci R$ are $R$-localized, meaning that $T\ci R f= T\ci R(\1\ci R f)$, and  $T\ci R f$ is supported on $R$, and the same holds for $T^R$.  

The functions $f_Q\in D\ci Q^{\bW, r}$ are constant on cubes $R\in\Ch^{r+1} Q$, so using the testing condition \eqref{test-T^Q} and the fact that the operators $T^R$ are $R$-localized, we get for $f_Q\in D\ci Q^{\bW, r}$
\begin{align}
\notag
\| (T^{r+1})\ci\bW f_Q \|\ci{L^2(\bV)}^2 & =  \sum_{R\in\Ch^{r+1} Q} \| T^R(\bW \1\ci R f_Q) \|\ci{L^2(\bV)}^2 \\
\notag
&\le \sum_{R\in\Ch^{r+1} Q} \fT^2 \| \1\ci R  f_Q \|\ci{L^2(\bW)}^2 \\
& \label{est-T^{r+1}} = \fT^2 \| f_Q \|\ci{L^2(\bW)}^2.
\end{align}
To estimate the operators $T_k,$ we estimate each block $T\ci R$ by Lemma \ref{l:norm<A_2}, and using the fact that $T\ci R$ is $R$-localized we get for $f_Q\in D\ci Q^{\bW, r}$
\begin{align*}
\| (T_k)\ci\bW f_Q \|\ci{L^2(\bV)}^2 & = \sum_{R\in\Ch^{k} Q} \| T\ci R(\bW \1\ci R f_Q) \|\ci{L^2(\bV)}^2  \\
&\le \sum_{R\in\Ch^{k} Q} d \ [\bV, \bW]\ci{A_2} \| \1\ci R  f_Q \|\ci{L^2(\bW)}^2 \\
& = d \ [\bV, \bW]\ci{A_2}  \|  f_Q \|\ci{L^2(\bW)}^2  \,.
\end{align*}
Adding these estimates for $k=0,1,\ldots, r$ and combining them with \eqref{est-T^{r+1}}, we see that for any $f_Q\in D\ci Q^{\bW,r}$ 
\begin{align*}
\| (T^Q)\ci\bW f_Q \|\ci{L^2(\bV)}  \le
\left( d^{1/2}  (r+1) [\bV, \bW]\ci{A_2}^{1/2} + \fT \right) \|f_Q \|\ci{L^2(\bW)} . 
\end{align*}
Since the projection $P\ci Q^\bV$ is a contraction in $L^2(\bV)$, the same estimate holds for the norm $\| P\ci Q^\bV (T^Q)\ci\bW f \|\ci{L^2(\bV)}$, so combining it with Lemma \ref{l:TQW-TWQ}, we obtain
\eqref{test-T_W^Q-01}.

Finally, to show that \eqref{weak-02} holds, let us recall that $T=\sum_{R\in\cR} T\ci R$, where $\cR\subset \cD$ is some finite collection. Then for each $Q_0\in\cD$ we can find a cube $Q\supset Q_0$ which is not contained in any $R\in\cR$. Then $ T\ci \bW \1\ci Q e = (T^Q)\ci\bW \1\ci Q e $, and \eqref{weak-02}  follows from \eqref{test-T^Q}.

\section{Appendix: density of simple functions}

\begin{lm}\label{dense}
Let $\cF$  be the smallest $\sigma$-algebra containing an increasing sequence of atomic $\sigma$-algebras $\cF_n$, with sets of atoms $\mathcal{D}_n.$ Let $\mathcal{L}$
denote the space of linear combinations of functions $\textbf{1}_{Q}e$ with $Q\in \cD = \cup_{n} \mathcal{D}_n$ and $e \in \F^d$. If $\bW$ is a $d\times d$ matrix valued
measure defined on $\cF$, then $\mathcal{L}$ is dense in $L^2(\bW)$.
\end{lm}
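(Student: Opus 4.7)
The plan is to approximate an arbitrary $f \in L^2(\bW)$ by elements of $\cL$ via three successive reductions: to $\F^d$-valued simple functions built from sets of finite scalar measure, to a single vector-valued block $\1\ci{E} v$, and finally to a finite union of atoms. Throughout, the scalar trace measure $\bw := \tr \bW$ plays a central role. Note first that $\bw$ is $\sigma$-finite: it is finite on every atom, and since $\cF$ is generated by the increasing union $\bigcup_n \cF_n$ of atomic $\sigma$-algebras, every element of $L^2(\bW)$ is essentially supported on a countable union of atoms, each of finite $\bw$-measure.

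For the first reduction, I would use the standard truncation plus $\F^d$-valued simple function approximation argument: given $f \in L^2(\bW)$, produce a sequence $f_k$ bounded and supported on a set of finite $\bw$-measure, then approximate each $f_k$ by a finite $\F^d$-valued simple function $\sum_i \1\ci{E_i} v_i$ with $E_i \in \cF$, $\bw(E_i) < \infty$. The $L^2(\bW)$ convergence follows from dominated convergence applied to the scalar integrand, using the pointwise bound $\langle W(f - f_k), f - f_k \rangle\ci{\F^d} \le \tr W \cdot \|f - f_k\|_{\F^d}^2$.

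For the second reduction, the key observation is that since $\bw = \tr \bW,$ the operator inequality $\bW(A) \le \bw(A) \I$ holds for every $A \in \cF$ of finite $\bw$-measure, whence
\[
\|\1\ci{E} v - \1\ci{F} v\|_{L^2(\bW)}^2 = \langle \bW(E \triangle F) v, v\rangle\ci{\F^d} \le \|v\|_{\F^d}^2 \, \bw(E \triangle F).
\]
Thus it suffices to approximate $E$, in scalar $\bw$-symmetric difference, by a finite union of atoms. For this, observe that $\cA := \bigcup_n \cF_n$ is an algebra (the $\cF_n$ being increasing) which generates $\cF$ and on which $\bw$ is $\sigma$-finite. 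By the standard measure-theoretic approximation theorem (a consequence of the monotone class theorem or Carath\'eodory's construction applied to the restriction of $\bw$ to $E$), for any $\epsilon > 0$ there exists $F_0 \in \cA$ with $\bw(E \triangle F_0) < \epsilon/2$. Since $F_0 \in \cF_n$ for some $n$, it is a countable disjoint union of atoms from $\cD_n$ of finite total $\bw$-measure, which by countable additivity may then be truncated to a finite disjoint union $F$ at the cost of an additional $\bw$-error at most $\epsilon/2$. Combining gives $\bw(E \triangle F) < \epsilon$, as required.

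The only subtle step is the scalar measure-theoretic approximation of $E \in \cF$ by sets in the generating algebra $\cA$, but this is a textbook application of the monotone class theorem once the problem has been scalarized via the single operator inequality $\bW(A) \le \bw(A) \I$. The vector-valued matrix-measure nature of the setting introduces no additional obstacle beyond this inequality.
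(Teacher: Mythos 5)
Your argument is correct in substance, but it follows a genuinely different route from the paper. The paper argues by duality: since $L^2(\bW)$ is a Hilbert space, it suffices to show that $\La f,\1_Q e\Ra_{L^2(\bW)}=0$ for all $Q\in\cD$, $e\in\F^d$ forces $f=0$ in $L^2(\bW)$. This is reduced to the scalar fact that a function orthogonal to all indicators of atoms vanishes $\bw$-a.e.\ (proved there via the Carath\'eodory outer-measure construction, showing the atoms recover the measure), and the possible degeneracy of the density $W(x)$ is then handled with the Moore--Penrose pseudoinverse, giving $\|f\|_{L^2(\bW)}^2=\int\La W^{-1}Wf,Wf\Ra\,\dd\bw=0$. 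You instead approximate directly: truncate, pass to $\F^d$-valued simple functions, scalarize a single block through the operator inequality $\bW(A)\le\bw(A)\I$ (correct, since $\bW(A)\ge0$ has trace $\bw(A)$), and then use the standard approximation theorem for the generating algebra $\cA=\bigcup_n\cF_n$ together with countable additivity to land on finite unions of atoms; $\sigma$-finiteness of $\bw$ on $\cA$ holds because $\cX$ is a countable union of atoms, each of finite $\bw$-measure. Both routes work: yours is constructive and makes the measure-theoretic approximation explicit, while the paper's duality argument is shorter and avoids all approximation bookkeeping at the price of the pseudoinverse trick.

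One justification needs repair. In the truncation step you propose to dominate $\La W(f-f_k),f-f_k\Ra$ by $\tr W\cdot\|f-f_k\|_{\F^d}^2$, which is essentially $\|f\|_{\F^d}^2$; this is not an integrable majorant in general, because when $W(x)$ is degenerate $f\in L^2(\bW)$ does \emph{not} imply $\|f\|_{\F^d}\in L^2(\bw)$ --- this is exactly the subtlety the paper's pseudoinverse step is built to handle. The fix is immediate: for truncations of the form $f_k=f\,\1_{\{\|f\|_{\F^d}\le k\}\cap A_k}$ one has $\La W(f-f_k),f-f_k\Ra\le\La Wf,f\Ra\in L^1(\bw)$, so dominated convergence applies with that majorant; your stated pointwise bound is then the right tool only for the second sub-step, approximating the bounded, finitely supported $f_k$ by simple functions, where the majorant is a constant times the indicator of a set of finite $\bw$-measure. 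With that adjustment the proof goes through.
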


\begin{proof} First, observe that the result is true for scalar measures. Indeed, if $\sigma$ is a scalar measure defined on $\cF$,  then  linear combinations of sets $\textbf{1}_{Q}$, $Q \in \mathcal{D}$ are dense in $L^2(\sigma)$. To see this, observe that we can obtain $\sigma$ by first starting with $\sigma$ defined on $\mathcal{D}$ and then extending $\sigma$ to $\mathcal{P}(\mathcal{X})$ via the outer measure
\begin{equation} \label{eqn:outer} \sigma^*(F) :=  \text{inf} \left \{ \sum_{j=1}^{\infty} \sigma(Q_j): Q_j \in \mathcal{D}, F \subset \bigcup_{j=1}^{\infty} Q_j \right \}.  \end{equation}
Then the Carath\'eodory's Theorem implies that $\sigma^*$ restricts to a measure on a $\sigma$-algebra $\mathcal{M}$, which contains $\mathcal{F}$. We only consider this measure restricted to $\mathcal{F}$
 and uniqueness implies that this measure is $\sigma$. Then \eqref{eqn:outer} shows linear combinations of
  $\textbf{1}_Q$, $Q \in \mathcal{D}$ are dense in the set of linear combinations of $\textbf{1}_F,$ $F \in \mathcal{F}$, which are dense in $L^2(\sigma).$

Now consider the matrix setting. Let $\bW$ be a $d\times d$ matrix valued
measure defined on $\cF$ with trace measure $\bw := \sum \bw_{i,i}$. Then 
\[ | \bw_{i,j}(F)| \le d \bw (F) \qquad \forall  F \in \mathcal{F}.\]
Thus, the Radon-Nikodym Theorem allows us to write $\bW = W(x)  \ d \bw$, where the entries of $W$ are in $L^{\infty}(\bw).$

We claim that if $f\in L^2(\bW)$ satisfies $\La f, e \textbf{1}_Q\Ra\ci{L^2(\bW)}=0$ for all $e\in \F^d$, $Q \in \cD,$ then $f \equiv0$ in $L^2(\bW).$
To see this, fix $e \in \F^d$ and suppose that for any $Q\in \cD$,
\[ \int_{\cX} \left \La W(x) f , e \right \Ra_{\mathbb{F}^d} \1\ci Q  \ d\bw
 = \int_{\cX} \left \La d \bW f , \1\ci Q e \right \Ra_{\mathbb{F}^d} =
%\int \La Wf(x), e\Ra \1\ci Q(x)d\sigma(x)=
0.
\]
%%
%Since the measure $\sigma$ does not appear in the setup, 
%we can define it in a convenient manner.  
%Define a scalar measure $\bw$ by $\bw:=\tr \bW= \sum_{j=1}^d \bw_{j,j}$. As mentioned earlier, $\bW$ is absolutely continuous with respect to $\bw$ and so there is a measurable, positive semidefinite function $W(x)$ such that $d \bW = W(x) d\bw.$ 
%Given that for any measurable set $E$ the matrix $W(E)$ is positive semidefinite, we conclude that for any $1\leq j,k\leq d$,
%\[
%|w_{j,k}(E)|^2\leq w_{j,j}(E)w_{k,k}(E) .
%\]
%Hence
%%%
%\[\sum_{j,k=1}^d |w_{j,k}(E)|^2\leq \sum_{j,k=1}^d w_{j,j}(E)w_{k,k}(E)\leq \sigma(E)^2,\]
%%%
%and in particular, for any $1\leq i,j\leq d$,
%%%
%\[|w_{j,k}(E)|\leq \sigma(E).\]
%%%
%Therefore, the measures $w_{j,k}$ are absolutely continuous with respect to $\sigma$, so, slightly abusing the notation we can write our matrix measure as $W \dd \sigma$. 
%in the scalar space $L^\infty$ and it must also be true that the entries of $W^{1/2}(x)$ are in $L^\infty$.
%Notice that if $f\in L^2(W)$, then $W^{1/2}f\in L^2$. This observation and the fact that the entries of $W^{1/2}(x)$ are in $L^\infty$ imply that $Wf\in L^2$.
%Then, by assumption, we have 
%\[\int_Q \left \La W(x)f(x), e \right \Ra_{\mathbb{F}^d} d\bw =0, \qquad \forall \ Q \in \mathcal{D},\]
The scalar result implies the function $\La W(x)f(x),e\Ra_{\mathbb{F}^d} =0, \bw$-a.e. 
%and so $\La W(x)f(x),e\Ra_{\mathbb{F}^d} =0$ in $L^(\bw).$
%
% To see this easily, assume that $\mathbb{F}^d = \mathbb{R}^d$ (similar arguments work for $\mathbb{C}^d$). As  
%$\La W(x)f(x),e\Ra_{\mathbb{F}^d}$ is measurable, the pre images $ Q_1:= \La W(x)f(x),e\Ra_{\mathbb{F}^d}^{-1}( (0, \infty))$ and  $ Q_2:= \La W(x)f(x),e\Ra_{\mathbb{F}^d}^{-1}( (-\infty, 0])$ are both in $\mathcal{F}.$ Thus, we can write $Q_1, Q_2$ as countable unions of disjoint atoms: $Q_1 = \cup_j Q_i^j$ and $Q_2 = \cup _kQ_2^k.$ Then, we can compute 
%\[ \int_{Q_1}   \La W(x)f(x),e\Ra_{\mathbb{F}^d} d\bw = \sum_j \int_{Q^j_1}   \La W(x)f(x),e\Ra_{\mathbb{F}^d} d\bw =0.\]
%As $  \La W(x)f(x),e\Ra_{\mathbb{F}^d} \ge 0$ on $Q_1,$ this implies that $  \La W(x)f(x),e\Ra_{\mathbb{F}^d} \textbf{1}_{Q_1}= 0$, $\bw$-a.e. Similar results hold on $Q_2$ and so  $  \La W(x)f(x),e\Ra_{\mathbb{F}^d} = 0$ $\bw$-a.e. 
%
%As this works for each $e\in \F^d$, we obtain $W(x)f(x)=0,$ $\bw$-a.e. 
Let $W(x)^{-1}$ denote the Moore-Penrose pseudoinverse of $W(x).$ Then 
\[W(x)^{-1} W(x)f(x) =0 \ \bw\text{-}a.e.\] 
as well. This immediately implies that 
\[  \| f \|_{L^2(\bW)}^2 = \int_{\cX} \left \langle d \bW f, f \right \rangle  =  \int_{\cX} \left \langle W^{-1} W f, W f \right \rangle  d\bw =0,\]
so $f$ is the zero element in the space $L^2(\bW)$.
\end{proof}

\end{document}